\medskip \color{blue}{ID: \  }}{\hfill{$\Box$}\color{black}}
\medskip \color{blue}{GD: \  }}{\hfill{$\Box$}\color{black}}
\theoremstyle{plain}
\newtheorem{theorem}{Theorem}[section]
\newtheorem{lm}[theorem]{Lemma}
\newtheorem{exa}[theorem]{Example}
\newtheorem{cor}[theorem]{Corollary}
\newtheorem{pro}[theorem]{Proposition}
\newtheorem{nota}[theorem]{Notation}
\newtheorem{fact}[theorem]{Fact}
\theoremstyle{definition}
\newtheorem{defi}[theorem]{Definition}
\newcommand{\symdiff}{\triangle}
\newcommand{\timplies}{\text{ implies }}
\newcommand{\card}[1]{\ensuremath{|#1|}}
\newcommand{\df}{\ensuremath{\overset{\mathrm{df}}{=}}}
\newcommand{\tiff}{if and only if \ }
\newcommand{\Iff}{\Longleftrightarrow}
\newcommand{\Implies}{\Rightarrow}
\newcommand{\klam}[1]{\ensuremath{\langle #1 \rangle}}
\newcommand{\set}[1]{\ensuremath{\{#1\}}}
\newcommand{\restrict}{\upharpoonright}
\newcommand{\ord}{\operatorname{ord}}
\newcommand{\wlg}{w.l.o.g.\ }
\newcommand{\RC}{\operatorname{RC}}
\newcommand{\RO}{\operatorname{RO}}
\newcommand{\CR}{\operatorname{CR}}
\newcommand{\CO}{\operatorname{CO}}
\def\p{\varphi}
\def\b{\beta}
\def\d{\delta}
\def\g{\gamma}
\def\GA{\Gamma}
\def\LAM{\Lambda}
\def\lra{\longrightarrow}
\def\sbe{\subseteq}
\def\stm{\setminus}
\def\ems{\emptyset}
\def\nes{\neq\emptyset}
\def\cuk{\,\check{}\,}
\def\unlb{\underline{B}}
\def\ex{\exists}
\def\fa{\forall}
\def\we{\wedge}
\def\bv{\bigvee}
\def\ap{^{\prime}}
\def\inv{^{-1}}
\def\st{\ |\ }
\def\llx{\ll_{\rho}}
\def\lle{\ll_{\eta}}
\def\1{{\bf 1}}
\def\2{\mbox{{\bf 2}}}
\def\3{\mbox{{\bf 3}}}
\def\AA{{\cal A}}
\def\BB{{\cal B}}
\def\CC{{\cal C}}
\def\DD{{\cal D}}
\def\FF{{\cal F}}
\def\LL{{\cal L}}
\def\PP{{\cal P}}
\def\TT{{\cal T}}
\def\UU{{\cal U}}
\def\VV{{\cal V}}
\def\WW{{\cal W}}
\def\HC{{\bf HC}}
\def\DHC{{\bf DHC}}
\def\HLC{{\bf HLC}}
\def\DHLC{{\bf DHLC}}
\def\ZHLC{{\bf ZHLC}}
\def\Bool{{\bf Bool}}
\def\Stone{{\bf Stone}}
\def\int{\mbox{{\rm int}}}
\def\cl{\mbox{{\rm cl}}}
\def\NNN{\ensuremath{\mathbb{N}}}
\def\BBBB{\mathbb{B}}
\def\RRRR{\mathbb{R}}
\def\NNNN{\mathbb{N}}
\def\IIII{\mathbb{I}}
\def\neset{\neq\emptyset} 
\def\tcx{t_X^C}
\def\tcy{t_Y^C}
\def\tcx0{t_{(X,X_0)}}
\def\tcy0{t_{(Y,Y_0)}}
\def\DLC{{\bf DLC}}
\def\bU0{\bar{U}=(U^0,(U^i,U^{ci})_{i\in\omega})}
\def\bV0{\bar{V}=(V^0,(V^i,V^{ci})_{i\in\omega})}
\title{{\LARGE\bf On dimension and weight  of a local \\ contact algebra}\\
\vspace{0.5cm} {\large\bf G. Dimov, E. Ivanova-Dimova and I.
D\"{u}ntsch }\thanks{The authors gratefully acknowledge support by
the Bulgarian National Fund of Science, contract
DN02/15/19.12.2016.}
\\
\vspace{0.2cm} {\footnotesize\rm Faculty of Math. and Informatics,
Sofia University,} {\footnotesize\rm 5 J. Bourchier Blvd., 1164
Sofia, Bulgaria}\\
{\footnotesize\rm School of Mathematics and Computer Science, Fujian Normal University, Fuzhou, China,}
{\footnotesize\rm and Dept. of Computer Science, Brock University,}
 {\footnotesize\rm St. Catharines, ON, L2S 3A1, Canada}
}
\author{}
\date{}
\begin{document}

\maketitle

\vspace{-15mm}

\begin{abstract}
\noindent As proved in \cite{D-AMH1-10}, there exists a duality
$\LAM^t$ between the category  $\HLC$  of  locally compact
Hausdorff spaces and  continuous maps, and the category $\DHLC$ of
complete local contact algebras and appropriate morphisms between
them. In this paper, we introduce the notions of weight $w_a$ and of dimension $\dim_a$
 of a local contact algebra, and we prove that if $X$ is a
locally compact Hausdorff space then $w(X)=w_a(\LAM^t(X))$, and
if, in addition, $X$ is normal, then $\dim(X)=\dim_a(\LAM^t(X))$.
\end{abstract}

\footnotetext[1]{{\footnotesize {\em Keywords:} (complete) (local) (normal)
Boolean contact algebra, LCA-completion, relative LC-algebra, dimension, weight, $\pi$-weight, locally compact
Hausdorff spaces, duality.}}

\footnotetext[2]{{\footnotesize {\em 2010 Mathematics Subject
Classification:}54H10, 54E05, 54F45, 54B30, 54D80.}}

\footnotetext[3]{{\footnotesize {\em E-mail addresses:}
gdimov@fmi.uni-sofia.bg, elza@fmi.uni-sofia.bg,
ivo@duentsch.net}}

\section{Introduction}

According to Stone's famous duality theorem \cite{Stone}, the
Boolean algebra $\CO(X)$ of all clopen (= closed and open) subsets
of a zero-dimensional compact Hausdorff space $X$ carries the
whole information about the space $X$, i.e. the space $X$ can be
reconstructed from $\CO(X)$, up to homeomorphism. It is natural
to ask whether the Boolean algebra $\RC(X)$ of all regular closed
subsets of a compact Hausdorff space $X$ carries the full
information about the space $X$ (see Example \ref{rct} below for $\RC(X)$). It is well known that the answer
is $``$No". For example, the Boolean algebras  of all regular
closed subsets of the unit interval $\IIII$ (with its natural
topology) and the absolute $a\IIII$ of $\IIII$ (i.e. the Stone
dual of $\RC(\IIII)$) are isomorphic but $\IIII$ and $a\IIII$ are
not homeomorphic because $\IIII$ is connected and $a\IIII$ is not (see, e.g., \cite{PW} for absolutes).
Suppose that $\HC$ is the category of compact Hausdorff spaces and continuous maps, and that $X$ is a compact Hausdorff space. As shown by H. de Vries \cite{deV}, all information about the space $X$ is contained in the pair $\klam{\RC(X),\rho_X}$, where $\rho_X$ is a binary relation on $\RC(X)$ such that for all $F,G\in \RC(X)$,
\begin{gather*}
F\rho_X G  \text{ \tiff}  F\cap G\nes.
\end{gather*}
In order to describe abstractly  the pairs $\klam{\RC(X),\rho_X}$, he introduced the notion of {\em compingent
Boolean algebra}, and he proved that there exists a duality between the category $\HC$ and the category $\DHC$ of complete compingent Boolean algebras and appropriate morphisms between them.

Subsequently, Dimov \cite{D-AMH1-10} extended de Vries' duality from the category $\HC$ to the category $\HLC$ of locally compact
Hausdorff spaces and continuous maps, and, on the base of this result, he  also obtained an extension of Stone's duality from the category $\Stone$ of compact zero-dimensional Hausdorff spaces and continuous maps to  the category $\ZHLC$ of zero-dimensional locally compact Hausdorff spaces and continuous maps (see \cite{D-PMD12,D-a0903-2593}).

The paper \cite{D-AMH1-10} has its precursor in results by P. Roeper \cite{Roeper}, who showed that all information about a locally compact Hausdorff space $X$ is contained in the triple
\begin{gather*}
\klam{\RC(X),\rho_X,\CR(X)},
\end{gather*}
 where
$\CR(X)$ is the set of all compact regular closed subsets of $X$. In order to describe abstractly the triples
$\klam{\RC(X),\rho_X,\CR(X)}$, he introduced the notion of {\em region-based topology}, and he proved that -- up to homeomorphisms, respectively, isomorphisms -- there exists a bijection between the class of all locally compact Hausdorff spaces and the class of all complete region-based topologies. The duality theorem proved in \cite{D-AMH1-10} says that there exists a duality $\LAM^t$ between the category $\HLC$
and  the category $\DHLC$ of all complete region-based topologies and appropriate morphisms between them.
Note that $$\LAM^t(X)\df \klam{\RC(X),\rho_X,\CR(X)},$$ for every locally compact Hausdorff space $X$.

In \cite{DV1}, the general notion of {\em Boolean contact algebra} was introduced and, accordingly, $``$compingent Boolean algebras"\/ were called $``$normal Boolean contact algebras" (abbreviated as NCAs), and $``$region-based topologies"\/ were called $``$local contact
Boolean algebras" (abbreviated as LCAs). Typical examples of Boolean contact algebras are the pairs $$\klam{\RC(X),\rho_X},$$ where $X$ is an arbitrary topological space.
We will even  use a more general notion, namely, the notion of a {\em Boolean precontact algebra}, introduced by D\"{u}ntsch and Vakarelov in \cite{DUV}.

Having a duality $\LAM^t$ between the categories $\HLC$ and
$\DHLC$, it is natural to look for the algebraic expressions dual to
topological properties of locally compact Hausdorff spaces. It
is easy to find such an expression for the property of
$``$connectedness"  even for arbitrary topological spaces, see \cite{Biacino-and-Gerla-1996}. Namely, a Boolean contact  algebra $\klam{B,C}$ is said to be {\em connected}\/ if $a\neq 0,1$
implies that $aCa^*$; here, $a^*$ is the Boolean complement of $a$. It was proved in
\cite{Biacino-and-Gerla-1996} that for a topological space $X$, the  Boolean contact algebra $\klam{\RC(X),\rho_{X}}$ is connected  \tiff  the space $X$ is connected.

In this paper we introduce the notions of {\em dimension of a precontact algebra}\/ and {\em weight of
a local contact algebra}, and prove that

\begin{enumerate}
\item The weight of a locally compact Hausdorff space $X$ is equal to
the weight of the local contact algebra $\LAM^t(X)$ (Theorem \ref{weightpiw}), and
\item The {\v C}ech--Lebesgue dimension of a normal   $T_1$-space $X$ is equal to the dimension of the
Boolean contact algebra $\klam{\RC(X),\rho_X}$ (Theorem \ref{thdim}). In particular, the {\v C}ech--Lebesgue dimension of a normal locally
compact Hausdorff space $X$ is equal to the dimension of the local
contact algebra $\LAM^t(X)$ (Corollary \ref{thdimcor}).
\end{enumerate}

One cannot define a notion of dimension for Boolean algebras corresponding to the topological notion of dimension via de Vries' or Dimov's dualities because for all positive natural numbers $n$ and $m$, the Boolean algebras $\RC(\RRRR^n)$  and $\RC(\RRRR^m)$ are isomorphic (see Birkhoff \cite[p.177]{bir48}) but, clearly, for $n\neq m$, $\dim(\RRRR^n)\neq \dim(\RRRR^m)$. Also, one cannot define an adequate (in the same sense)  notion of weight for Boolean algebras because, for example, the Boolean algebras $\RC(\IIII)$  and $\RC(a\IIII)$ are isomorphic but $w(\IIII)=\aleph_0<2^{\aleph_0}=w(a\IIII)$ (see \cite[Chapter VI, Problem 234(a)]{AP}).

The paper is organized as follows.  Section 2 contains all preliminary facts and definitions which are used in this paper. In Section 3, we introduce and study the notion of dimension of a precontact algebra.  Here we prove Theorem \ref{thdim} and Corollary \ref{thdimcor}, mentioned above. It is shown as well that the dimension of a normal contact algebra is equal to the dimension of its NCA-completion (see \cite{D-AMH2-10,D-a0903-2593} for this notion), that the dimension of any NCA of the form $\klam{B,\rho_s}$ (where $\rho_s$ is the smallest  contact relation on $B$) is equal to zero (as it should be),  and that the dimension of every relative LCA of an LCA $\klam{B,\rho,\BBBB}$ is smaller or equal to $\dim_a(\klam{B,\rho,\BBBB})$. Recall that L. Heindorf (cited in \cite{monk14}) introduced the notion of $\AA$-dimension for Boolean algebras, where $\AA$ is an arbitrary non-empty class  of Boolean algebras. There is, however, no connection between the topological notion of dimension and the notion of $\AA$-dimension, so that his investigations are in a different direction from those carried out here.

In Section 4, we introduce and study the notion of weight of a local contact algebra.  Here we prove Theorem \ref{weightpiw}, mentioned above. We show as well that the weight of a local contact algebra is equal to the weight of its LCA-completion (see \cite{D-AMH2-10,D-a0903-2593} for this notion), find an algebraic analogue of Alexandroff-Urysohn theorem for bases (\cite[Theorem 1.1.15]{E}), describe the LCAs whose dual spaces are metrizable, and characterize the LCAs whose dual spaces are zero-dimensional. Furthermore, for  a  dense Boolean subalgebra $A_0$ of a Boolean algebra $A$, we construct an NCA $\klam{A,\rho}$ such that $w_a(\klam{A,\rho})=|A_0|$,  and if $A$ is complete, then its dual space is homeomorphic to the Stone dual of $A_0$.

 In Section 5, we discuss the relationship between  algebraic density and  algebraic weight, introduce the notion of a $\pi$-{\em semiregular space}, and show that if $X$ is $\pi$-semiregular then
 $\pi w(X)$ is equal to the density of the Boolean algebra $\RC(X)$. Finally,  for every $\pi$-semiregular space $X$ with $\pi w(X)\ge\aleph_0$, we prove that  there exists a zero-dimensional compact Hausdorff space $Y$ with $w(Y)=\pi w(X)$ such that the Boolean algebras $\RC(X)$ and $\RC(Y)$ are isomorphic.

 The results from Sections 4 and 5 are from the arXiv-paper \cite{D-a0903-2593}.

\section{Preliminaries}


\subsection{Notation and first definitions}\label{sec:not}

Suppose that $\klam{P, \leq, 0}$ is a partially ordered set with smallest element $0$. If $M \subseteq P$, then $M^+ \df M \setminus \set{0}$. $M$ is called \emph{dense in $P$}, if for all $a \in P^+$ there is some $b \in M^+$ such that $b \leq a$.

A {\em join-semilattice} is a partially ordered set having all finite non-empty joins.

We denote by  ${\NNNN}$ the set of all non-negative integers, by $\NNNN^-$ the set $\NNNN\cup\{-1\}$, by $\mathbb{R}$ the real line (with its
natural topology), and by $\mathbb{I}$ the subspace $[0,1]\ (\df \{x\in\mathbb{R}\st 0\le x\le 1\})$ of $\mathbb{R}$.

The power set of a set $X$ is denoted by $2^X$; we implicitly suppose that $2^X$ is a Boolean algebra under the set operations.

Throughout, $\klam{B, \land, \lor, {}^*, 0, 1}$ will denote a Boolean algebra unless indicated otherwise; we do not assume that $0 \neq 1$. With some abuse of language, we shall usually identify algebras with their universe, if no confusion can arise.

If $B$ is a Boolean algebra and $b \in B^+$, we let $B_b$  be the relative algebra of $B$ with respect to $b$ \cite[Lemma 3.1.]{kop89}.

If $a,b \in B$, then $a \symdiff b$ denotes the symmetric difference of $a$ and $b$, i.e. $a \symdiff b \df  (a \land b^* ) \lor (b \land a^*)$. It is well known that $a \symdiff b = 0$ \tiff $a = b$.



Throughout, $(X,\TT)$ will be a topological space. If no confusion can arise, we shall just speak of $X$.
We denote by $\CO(X)$ the set of all clopen (= closed and open) subsets of $X$; clearly, $\klam{\CO(X),\cup,\cap,\stm,\ems, X}$ is a Boolean algebra.
A subset $F$ of $X$ is called {\em regular closed}\/ (resp., {\em regular open}\/) if $F=\cl(\int(F))$ (resp., $F=\int(\cl(F))$). We let $\RC(X)$ (resp., $\RO(X)$) be the set of all regular closed (resp., regular open) subsets of $X$. The space $X$ is called {\em semiregular} if $\RO(X)$ is an open base for $X$, or, equivalently, if $\RC(X)$ is a closed base for $X$.


A \emph{cover} of a set $X$ is a family $\AA$ of subsets of $X$
for which $\bigcup \AA = X$. If $\AA, \BB$ are covers of $X$, then
$\BB$ is a \emph{refinement of $\AA$}, if for every $B \in \BB$
there is some $A \in \AA$ such that $B \subseteq A$. A cover $\BB
\df  \set{B_i\st i \in I}$ is a \emph{shrinking of $\AA \df  \set{A_i\st i
\in I}$} if $B_i \subseteq A_i$ for all $i \in I$. If $\AA \df
\set{A_i\st i \in I} \subseteq 2^X$, a family $\BB \df  \set{B_i\st i \in
I} \subseteq 2^X$ is called a \emph{swelling of $\AA$}, if $A_i
\subseteq B_i$ for all $i \in I$, and for all $k\in\NNNN^+$ and
$i_1, \ldots, i_k \in I$,
\begin{gather*}
A_{i_1} \cap \ldots \cap A_{i_k} = \ems \Iff B_{i_1} \cap \ldots
\cap B_{i_k} = \ems.
\end{gather*}

A  cover (refinement, shrinking, swelling) of a topological space
$X$ is called \emph{open (regular open, closed, regular closed)}
if all of its members are open (regular open, closed, regular
closed) subsets of $X$.

If $X$ is a set and $\mathcal A \subseteq 2^X$, the \emph{order of
$\mathcal A$} is defined as
\begin{gather*}
\ord \AA \df
\begin{cases}
n, &\text{if } n = \max\set{m\in\NNNN^-\st (\exists A_1, \ldots,
A_{m+1} \in \AA)(\bigcap_{i=1}^{m+1} A_i \neq \ems)},\\ \infty,
&\text{if such $n$ does not exist}.
\end{cases}
\end{gather*}

It follows that if $\ord \AA = n$, then the intersection of every
$n+2$ distinct elements of $\AA$ is empty. Also, $\ord \AA= -1$
 \tiff  $\AA=\{\ems\}$, and $\ord \AA= 0$  \tiff  $\AA$ is a disjoint
family of subsets of $X$ which are not all empty.

The \emph{{\v C}ech--Lebesgue dimension} of a topological space $X$, denoted by $\dim(X)$, is defined in layers, see e.g.,\cite{eng_dim}.
 Suppose that $n \in \NNNN^-$.
\begin{quote}
\begin{enumerate}
\renewcommand{\theenumi}{\ensuremath{(CL\arabic{enumi})}}
\item If every finite open cover of $X$ has a finite open refinement of order at most $n$, then $\dim(X) \leq n$. \label{cle1}
\item If $\dim(X) \leq n$ and $\dim(X) \not\leq n-1$, then $\dim(X) = n$. \label{cle2}
\item If $n \lneq \dim(X)$ for all $n \in \NNNN^-$, then $\dim(X) = \infty$. \label{cle3}
\end{enumerate}
\end{quote}
Observe that $\dim(X) = -1$ \tiff $X = \emptyset$.

If $\CC$ is a category, we denote by $\card{\CC}$ the class of all objects of the category $\CC$, and by $\CC(A,B)$ the set of all $\CC$-morphisms between the $\CC$-objects $A$ and $B$.

For unexplained notation we invite the reader to consult \cite{kop89} for Boolean algebras, \cite{AHS} for category theory, and
 \cite{E} for topology.



\subsection{Boolean (pre)contact algebras}


In this paper we work mainly with Boolean algebras with  supplementary structures on them. In all cases, we will say that the corresponding
structured Boolean algebra is {\em complete}\/ if the underlying Boolean algebra is complete.

\begin{defi}\label{precontact}{\rm (\cite{DUV})}
\rm
A {\em Boolean precontact algebra}, or, simply, {\em precontact algebra}\/ (PCA) (originally, {\em Boolean proximity algebra} \cite{DUV}), is a structure $\klam{B,C}$, where $B$ is a Boolean algebra, and $C$ a binary relation on $B$, called a precontact relation, which satisfies the following axioms:

\begin{enumerate}
\renewcommand{\theenumi}{\ensuremath{(C\arabic{enumi})}}
    \item If $aCb$ then $a\not=0$ and $b\not=0$.\label{c2}
     \item $aC(b\vee c)$  \tiff  $aCb$ or $aCc$; $(a\vee b)Cc$ \tiff
$aCc$ or $bCc$.\label{c4}
\end{enumerate}

\noindent
 Two precontact algebras\/ $\klam{B,C}$ and\/
$\klam{B_1,C_1}$ are said to be\/ {\em PCA-isomorphic}
(or, simply,\/ {\em isomorphic}) if there exists a {\em PCA-isomorphism} between them, i.e., a Boolean
isomorphism $\varphi: B\longrightarrow B_1$ such that, for every
$a,b\in B$, $aCb$ iff $\varphi(a)C_1 \varphi(b)$.
\end{defi}

The notion of a precontact algebra was defined independently (and in a completely different form) by  S. Celani \cite{Celani}.
A duality theorem for precontact algebras was obtained in \cite{DDV} (see also \cite{DV-LN06,DV3}).

\begin{defi}\label{conalg}
\rm
A PCA $\klam{B,C}$ is called a {\it Boolean contact algebra}\/  \cite{DV1} or, briefly, a {\it contact algebra} (CA), if it satisfies the following additional axioms for all $a,b \in B$:

\begin{enumerate}
\renewcommand{\theenumi}{\ensuremath{(C\arabic{enumi})}}
\setcounter{enumi}{2}
    \item If $a\not= 0$ then $aCa$.\label{c1}
    \item $aCb$ implies $bCa$.\label{c3}
\end{enumerate}
The relation $C$ is  called a \emph{contact relation}.
As usual, if $a\in B$,  we set $$C(a)\df \{b\in B\st aCb\}.$$
\end{defi}


We shall consider two more properties of contact algebras:

\begin{enumerate}
\renewcommand{\theenumi}{\ensuremath{(C\arabic{enumi})}}
\setcounter{enumi}{4}
    \item If $a(-C)b$ then $a(-C)c$ and $b(-C)c^*$ for some
$c\in B$.\label{c5}
    \item  If $a\not= 1$ then there exists $b\not= 0$ such that
$b(-C)a$.  \label{c6}
\end{enumerate}

A contact algebra $\klam{B,C}$ is called a {\it  Boolean normal contact  algebra}\/ or, briefly, {\it  normal contact algebra}
(abbreviated as NCA) \cite{deV,F} if it satisfies \ref{c5} and \ref{c6}.

\noindent
The notion of normal contact algebra was introduced by Fedorchuk \cite{F} under the name of {\em Boolean $\d$-algebra}\/ as an equivalent
expression of the notion of {\em compingent Boolean algebra} of de Vries (see the definition below). We call such algebras ``normal contact algebras'' because they form a subclass of the class of contact algebras which naturally arise as canonical algebras
in normal Hausdorff spaces (see \cite{DV1}).

Axiom \ref{c6} is an extensionality axiom since a CA $\klam{B,C}$ satisfies \ref{c6}  \tiff $(\forall a,b \in B)[C(a) = C(b) \timplies a = b]$
%
%
(see \cite[Lemma 2.2]{DV1}). Keeping this in mind, we call a CA $\klam{B,C}$  an {\em extensional contact algebra}\/ (abbreviated as ECA) if it satisfies \ref{c6}. This notion was introduced in \cite{DW} under the name of {\em Boolean contact algebra}, and a representation theorem for ECAs was proved there.

Note that if $0\neq 1$, then \ref{c2} follows from the axioms \ref{c4}, \ref{c3}, and \ref{c6}.

\begin{defi}\label{def:ll}
\rm
For a PCA $\klam{B,C}$, we define a binary relation  $``\ll_C $'' on $B$, called {\em non-tangential inclusion},  by
\begin{gather*}
\ a \ll_C b \text{ \tiff} a(-C)b^*.
\end{gather*}
Here, $-C$ is the set complement of $C$ in $B \times B$. If $C$ is understood, we shall  simply write
``$\ll$'' instead of ``$\ll_C$''.
\end{defi}

The relations $C$ and $\ll$ are inter-definable. For example,
normal contact algebras may be equivalently defined -- and exactly
in this way they were introduced under the name of {\em
compingent Boolean algebras}\/ by de Vries in \cite{deV} -- as a pair consisting
of a Boolean algebra $B$ and a binary relation $\ll$ on $B$ satisfying the following axioms:

\begin{quote}
\begin{enumerate}
\renewcommand{\theenumi}{($\ll$\arabic{enumi})}
\item $a \ll b \timplies a \leq b$. \label{di1}
\item $0\ll 0$. \label{di2}
\item  $a\leq b\ll c\leq t$ implies $a\ll t$. \label{di3}
\item $a\ll c$ and $b\ll c$ implies $a\vee b\ll c$. \label{di4}
\item If  $a\ll c$ then $a\ll b\ll c$  for some $b\in B$. \label{di5}
\item  If $a\neq 0$ then there exists $b\neq 0$ such that $b\ll
a$. \label{di6}
\item$a\ll b$ implies $b^*\ll a^*$. \label{di7}
\end{enumerate}
\end{quote}

Indeed, if $\klam{B,C}$ is an NCA, then the relation $\ll_C$ satisfies the axioms \ref{di1} -- \ref{di7}.  Conversely,
having
 a pair $\klam{B, \ll}$, where $B$ is a Boolean algebra and $\ll$ is a binary relation  on $B$ which satisfies \ref{di1} -- \ref{di7}, we define a relation
$C_\ll$
by $aC_\ll b$ \tiff $a\not\ll b^*$; then $\klam{B,C_\ll}$ is an NCA. Note that the axioms \ref{c5} and \ref{c6} correspond to \ref{di5} and to \ref{di6}, respectively. It is easy to see that contact algebras could be equivalently defined as a
pair of a Boolean algebra $B$ and a binary relation $\ll$ on $B$
subject to the  axioms \ref{di1} -- \ref{di4} and \ref{di7}; then, clearly,  the relation $\ll$ also
satisfies  the axioms

\smallskip

\noindent($\ll$2') $1\ll 1$;\\
($\ll$4') ($a\ll c$ and $b\ll c$) implies $(a\vee b)\ll c$.

 It is not difficult to see that precontact algebras could be equivalently defined as a
pair of a Boolean algebra $B$ and a binary relation $\ll$ on $B$
subject to the  axioms ($\ll$2), ($\ll$2'), ($\ll$3), ($\ll$4) and ($\ll$4').

A mapping $\p$ between two contact algebras $\klam{B_1,C_1}$ and $\klam{B_2,C_2}$ is called a \emph{CA-morphism} (\cite{DDV}), if $\p: B_1 \lra B_2$ is a Boolean homomorphism, and
$\p(a) C_2 \p(b)$ implies $aC_1 b$, for any $a,b\in B_1$.
Note that $\p:\klam{B_1,C_1}\lra \klam{B_2,C_2}$ is a CA-morphism \tiff  $a\ll_{C_1} b$ implies $\p(a) \ll_{C_2} \p(b)$, for any $a,b\in B_1$.
 (Thus, a CA-morphism is a structure preserving morphism between $\klam{B_1,\ll_1}$ and $\klam{B_2,\ll_2}$ in the sense of first order logic.)
Two CAs $\klam{B_1,C_1}$ and $\klam{B_2,C_2}$ are {\em CA-isomorphic}  \tiff  there exists a
bijection $\p:B_1\lra B_2$ such that $\p$ and $\p\inv$ are CA-morphisms.

The following assertion may be worthy of mention:

\begin{pro}\label{pro:injective}
If $\klam{B_1,C_1}$ and $\klam{B_2,C_2}$ are CAs, $\p: B_1 \lra B_2$ is a Boolean homomorphism and
$\p$ preserves the contact relation $C_1$ (i.e., $aC_1 b$ implies $\p(a) C_2 \p(b)$, for all $a,b\in B_1$), then $\p$  is an injection.
\end{pro}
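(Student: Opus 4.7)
The plan is to show that the kernel of $\p$ (as a Boolean homomorphism) is trivial, i.e., that $\p(a)=0$ forces $a=0$. Recall that for Boolean homomorphisms, triviality of the kernel is equivalent to injectivity, since $\p(a)=\p(b)$ \tiff $\p(a\symdiff b)=0$, and $a\symdiff b=0$ \tiff $a=b$.

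To prove this, I would argue by contradiction. Suppose $a\in B_1$ with $\p(a)=0$ but $a\neq 0$. Since $\klam{B_1,C_1}$ is a contact algebra, axiom \ref{c1} (reflexivity of contact on non-zero elements) gives $aC_1 a$. By the assumption that $\p$ preserves the contact relation, this yields $\p(a) C_2 \p(a)$, i.e., $0\, C_2\, 0$. But axiom \ref{c2} stipulates that $xC_2 y$ implies $x\neq 0$ and $y\neq 0$, which is a contradiction. Hence $a=0$, so $\ker(\p)=\{0\}$, and $\p$ is injective.

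There is no real obstacle here: the argument uses only the two most basic contact axioms \ref{c2} and \ref{c1}, together with the standard fact about kernels of Boolean homomorphisms. The step that might tempt one to overcomplicate things is the reduction from injectivity to the kernel condition, but this is routine and follows from the observation on symmetric differences recorded earlier in Section~\ref{sec:not}.
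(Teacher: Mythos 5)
Your proof is correct and is essentially the paper's own argument: both reduce injectivity to showing the kernel is trivial via the symmetric difference $a\symdiff b$, and both derive the contradiction $0\,C_2\,0$ from reflexivity of $C_1$ on nonzero elements together with preservation of the contact relation. No substantive difference.
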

\begin{proof}
Assume that $\p$ is not injective. Then, there are $a,b \in B_1$ such that $a \neq b$ and $\p(a) = \p(b)$; hence, $c \df  a \symdiff b \neq 0$, and $\p(c) = 0$. By \ref{c1}, $c C_1 c$, and the fact that $\p$ preserves $C_1$ implies that $\p(c) C_2 \p(c)$, i.e. $0C_2 0$. This contradicts \ref{c2}.
\end{proof}

The most important ``concrete'' example of a CA is given by the regular closed sets of an arbitrary topological space.

\begin{exa}\label{rct}
\rm Let $(X,\TT)$ be a  topological space. The collection $\RC(X,\TT)$  becomes a complete Boolean algebra
$\klam{\RC(X,\TT),0,1,\we,\vee,{}^*}$ under the following operations:
\begin{align*}
F\vee G &\df F\cup G, & F\we G &\df \cl(\int(F\cap G)), & F^* &\df  \cl(X\stm F), & 0 &\df  \emptyset, & 1 &\df  X.
\end{align*}
 The infinite operations are given by the  formulas
 \begin{align*}
 \bigvee\{F_\g\st \g\in\GA\} &\df  \cl(\bigcup_{\g\in\GA}F_\g)\
(=\cl(\bigcup_{\g\in\GA}\int(F_\g))=\cl(\int(\bigcup_{\g\in\GA}F_\g))), \\
\bigwedge\{F_\g\st \g\in\GA\} &\df  \cl(\int(\bigcap\{F_\g\st
\g\in\GA\})),
 \end{align*}

Define a  relation $\rho_{(X,\TT)}$ on $\RC(X,\TT)$ by setting, for each $F,G\in \RC(X,\TT)$,
 \begin{gather*}
 F \rho_{(X,\TT)}G \mbox{  \tiff  } F\cap G\neq \ems.
 \end{gather*}
 Clearly, $\rho_{(X,\TT)}$ is a contact relation, called the {\em standard contact relation of $(X, \TT)$}. The  complete CA $\klam{\RC(X,\TT),\rho_{(X,\TT)}}$  is called a {\em standard contact algebra}. If no confusion can arise, we shall usually write simply $\RC(X)$ instead of $\RC(X,\TT)$, and $\rho_X$ instead of $\rho_{(X,\TT)}$. Note that, for $F,G\in \RC(X)$,
 \begin{gather*}
 F\ll_{\rho_X}G \mbox{  \tiff  } F\sbe\int_X(G).
 \end{gather*}
Thus, if $(X,\TT)$ is a normal Hausdorff space then the standard contact algebra $$\klam{\RC(X,\TT),\rho_{(X,\TT)}}$$ is a complete NCA.

Instead of looking at regular closed sets, we may, equivalently, consider regular open sets. The collection $\RO(X)$ of regular open sets becomes a complete Boolean algebra by setting
\begin{align*}
  U\vee V &\df  \int(\cl(U\cup V)), & U \land V &\df  U \cap V, & U^* &\df  \int(X\stm U), & 0 &\df  \emptyset, & 1 &\df  X,
\end{align*}
and
\begin{align*}
\bigwedge_{i\in I} U_{i}& \df \int(\cl(\bigcap_{i\in I}U_{i}))\ (= \int(\bigcap_{i\in
I}U_{i})), & \bigvee_{i\in I} U_{i} &\df \int(\cl(\bigcup_{i\in I}U_{i})),
\end{align*}
see \cite[Theorem 1.37]{kop89}. We define a contact relation $D_X$ on $\RO(X)$ as follows:
\begin{gather*}
U D_X V \mbox{  \tiff  } \cl (U)\cap \cl (V)\neset.
\end{gather*}
Then $\klam{\RO(X),D_X}$ is a complete CA.

The contact algebras $\klam{\RO(X),D_X}$ and $\klam{\RC(X),\rho_X}$ are CA-isomorphic via the mapping
$\nu: \RO(X) \lra \RC(X)$ defined by the formula $\nu(U)\df \cl(U)$,
%
%
for every $U\in \RO(X)$.
\end{exa}

\begin{exa}\label{extrcr}
\rm Let $B$ be a Boolean algebra. Then there exist a largest and a
smallest contact relations on $B$; the largest one, $\rho_l^B$, is
defined by $$a\rho_l^B b \iff (a\neq 0\mbox{ and }b\neq 0),$$ and the
smallest one, $\rho_s^B$, by $$a\rho_s^B b \iff a\wedge b\neq 0.$$
When there is
no ambiguity, we will simply write $\rho_s$ instead of $\rho_s^B$, and $\rho_l$ instead of $\rho_l^B$.

Note that, for $a,b\in B$, $$a\ll_{\rho_s} b \iff a\le b;$$ hence
$a\ll_{\rho_s} a$, for any $a\in B$. Thus $(B,\rho_s)$ is a normal
contact algebra.
\end{exa}

\subsection{Local contact algebras}

Local contact algebras were introduced by Roeper \cite{Roeper} under the somewhat misleading name {\em region-based topologies}. Since every region-based topology is a contact algebra and also a lattice-theoretical counterpart of Leader's notion of {\em local proximity} \cite{LE}, it was suggested in \cite{DV1} to rename them to \emph{Boolean local contact  algebras}.

\begin{defi}\label{locono}{\rm \cite{Roeper}}
\rm A system $\klam{B, \rho, \BBBB}$ is called a \emph{Boolean local contact  algebra}\/ or, briefly,
\emph{local contact algebra} (abbreviated as LCA or as LC-algebra)   if $B$ is a Boolean algebra, $\rho$ is a contact relation on $B$, and $\BBBB$ is a not necessarily proper ideal of $B$ satisfying the following axioms:

\begin{quote}
\begin{enumerate}
\renewcommand{\theenumi}{(LC\arabic{enumi})}
\item If $a\in\BBBB$, $c\in B$ and $a\ll_\rho c$ then $a\ll_\rho b\ll_\rho c$ for some $b\in\BBBB$. \label{bc1}
\item If $a\rho b$ then there exists an element $c$ of $\BBBB$ such that $a\rho (c\we b)$. \label{bc2}
\item If $a\neq 0$ then there exists some $b\in\BBBB^+$ such that $b\ll_\rho a$. \label{bc3}
\end{enumerate}
\end{quote}
The elements of $\BBBB$ are called {\em bounded}, and the elements of $B\stm \BBBB$  are called  {\em unbounded}.
\end{defi} 
It may be worthy to note that it follows from a result by M. Rubin \cite{rub76}, that the first order theory of LCAs is undecidable.

Two local contact algebras $\klam{B,\rho,\BBBB}$ and $\klam{B_1,\rho_1,\BBBB_1})$ are  {\em LCA-isomorphic} if there exists a CA-isomorphism $\p:\klam{B,\rho}\lra \klam{B_1,\rho_1}$ such that, for any $a\in B$,
$\p(a)\in\BBBB_1$  \tiff  $a\in\BBBB$.

A map $\p:\klam{B,\rho,\BBBB}\lra\klam{B_1,\rho_1,\BBBB_1}$ is called an {\em LCA-embedding} if
$\p:\klam{B,\rho}\lra \klam{B_1,\rho_1}$ is a  CA--morphism such that  for any $a,b\in B$, $a\rho b$
 implies $\p(a)\rho_1\p(b)$, and
$\p(a)\in\BBBB_1$  \tiff  $a\in\BBBB$. Note that the name is justified, since, as it follows from  Proposition \ref{pro:injective}, any LCA--embedding is an injection.

If $\klam{B,\rho,\BBBB}$ is a local contact algebra and $\BBBB=B$, i.e., $\BBBB$ is an improper ideal, then $\klam{B,\rho}$ is a normal contact algebra. Conversely, any normal contact algebra $\klam{B,C}$ can be regarded as a local contact algebra of the form $\klam{B,C,B}$.

\begin{pro}\label{stanlocn}{\rm \cite{Roeper,VDDB}}
Let $X$ be a locally compact Hausdorff space. Then the triple $\klam{\RC(X),\rho_{X}, \CR(X)}$ is a complete local contact algebra.
\end{pro}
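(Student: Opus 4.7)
The plan is to verify successively the four requirements in Definition \ref{locono}: that $\RC(X)$ is a complete Boolean algebra with $\rho_X$ a contact relation; that $\CR(X)$ is an ideal; and that the axioms \ref{bc1}, \ref{bc2}, \ref{bc3} hold. The first requirement is furnished directly by Example \ref{rct}, so nothing new is needed. For the ideal property, I would note that $\ems\in\CR(X)$, that a join $F\vee G = F\cup G$ of two compact regular closed sets is again compact and regular closed, and that if $F\in\CR(X)$ and $G\in\RC(X)$ satisfies $G\le F$ (i.e.\ $G\sbe F$), then $G$ is a closed subset of a compact Hausdorff subspace and hence compact, so $G\in\CR(X)$.

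The bulk of the work is the verification of \ref{bc1}--\ref{bc3}, and in every case the main tool is the following standard consequence of local compactness: for every $x\in X$ and every open $U\ni x$, there is an open $V$ with $x\in V$, $\cl(V)$ compact, and $\cl(V)\sbe U$. Whenever we want the resulting witness to be regular closed, we replace $\cl(V)$ by $\cl(\int(\cl(V)))$, noting that $W\df\int(\cl(V))$ is regular open, that $\cl(W)\sbe\cl(V)$ is still compact, and that $V\sbe W\sbe\cl(W)$, so the compact containment in $U$ is preserved.

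For \ref{bc3}, given $a\neq 0$, $\int(a)$ is nonempty (as $a=\cl(\int a)$); pick $x\in\int(a)$ and apply the lemma with $U=\int(a)$ to obtain a nonempty compact regular closed $b\sbe\int(a)$, which gives $b\ll_{\rho_X}a$. For \ref{bc1}, given $a\in\CR(X)$ with $a\ll_{\rho_X}c$, i.e.\ $a\sbe\int(c)$, cover the compact set $a$ by finitely many open sets of the form $V_i$ with $\cl(V_i)$ compact and $\cl(V_i)\sbe\int(c)$; then $U\df\bigcup_i V_i$ has compact closure contained in $\int(c)$, and $b\df\cl(\int(\cl(U)))$ is a compact regular closed set with $a\sbe\int(b)$ and $b\sbe\int(c)$, i.e.\ $a\ll_{\rho_X}b\ll_{\rho_X}c$. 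For \ref{bc2}, given $a\rho_X b$ I would first use the standard fact that for $F,G\in\RC(X)$, $F\cap G\neq\ems$ is equivalent to $\int(F)\cap\int(G)\neq\ems$, so there exists $x\in\int(a)\cap\int(b)$; applying the lemma with $U=\int(b)$ we obtain a compact regular closed $c$ with $x\in\int(c)$ and $c\sbe\int(b)\sbe b$, whence $c\we b=c$ and $x\in a\cap c=a\cap(c\we b)$, giving $a\rho_X(c\we b)$.

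The only mildly delicate point is the systematic insistence on regular closed witnesses (plain compact closed sets would not lie in $\RC(X)$), which is handled by the uniform replacement $K\mapsto\cl(\int(K))$ together with the observation that passing from an open set to the regular open set $\int(\cl V)$ only enlarges neighborhoods and preserves compactness of the closure. No deeper obstacle is expected; the proposition is essentially a translation of standard local-compactness facts into the language of the Boolean algebra $\RC(X)$.
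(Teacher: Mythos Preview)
The paper does not prove this proposition; it simply quotes it from \cite{Roeper,VDDB}. Your outline is therefore more detailed than anything in the paper, and the verifications of the ideal property, of \ref{bc1}, and of \ref{bc3} are correct and routine.

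There is, however, a genuine gap in your argument for \ref{bc2}. The ``standard fact'' you invoke---that for $F,G\in\RC(X)$ one has $F\cap G\neq\ems$ iff $\int(F)\cap\int(G)\neq\ems$---is false. In $\RRRR$ take $F=[0,1]$ and $G=[1,2]$: both are regular closed, $F\cap G=\{1\}\neq\ems$, yet $\int(F)\cap\int(G)=\ems$. What \emph{is} true is that $F\we G\neq 0$ iff $\int(F)\cap\int(G)\neq\ems$, but $\rho_X$ is defined via $F\cap G$, not via $F\we G$. So you cannot in general pick $x\in\int(a)\cap\int(b)$, and your construction of $c$ with $c\sbe\int(b)$ (forcing $c\we b=c$) is not available.

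The repair is easy. Pick any $x\in a\cap b$ and use local compactness to obtain $c\in\CR(X)$ with $x\in\int(c)$ (no containment in $b$ required). Then $x\in a$, and $x\in c\we b=\cl(\int(c)\cap\int(b))$: indeed, every open neighbourhood $U$ of $x$ contains the neighbourhood $U\cap\int(c)$, which meets $\int(b)$ because $x\in b=\cl(\int(b))$. Hence $x\in a\cap(c\we b)$ and $a\rho_X(c\we b)$.
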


The complete LCA $\klam{\RC(X),\rho_{X}, \CR(X)}$ is called the {\em standard local contact algebra} of $X$.
%


 We will need the following notation: for every
function $\psi:\klam{B,\rho,\BBBB}\lra \klam{B\ap,\eta,\BBBB\ap}$ between two
LCAs, the function $$\psi\cuk:\klam{B,\rho,\BBBB}\lra
\klam{B\ap,\eta,\BBBB\ap}$$ is defined by
\begin{equation}\label{cukfcon}
\ \psi\cuk(a)\df\bigvee\{\psi(b)\st b\in \BBBB, b\llx a\},
\end{equation}
for every $a\in B$.

\begin{defi}\label{dhc}{\rm(\cite{D-AMH1-10})}
\rm
Let $\DHLC$ be the category whose objects are all complete LC-algebras
and whose morphisms are all functions $\p:\klam{B,\rho,\BBBB}\lra
\klam{B\ap,\eta,\BBBB\ap}$ between the objects of $\DHLC$ satisfying the following
conditions:

\smallskip

\noindent(DLC1) $\p(0)=0$;\\
(DLC2) $\p(a\we b)=\p(a)\we \p(b)$, for all $a,b\in B$;\\
(DLC3) If $a\in\BBBB, b\in B$ and $a\llx b$, then $(\p(a^*))^*\lle
\p(b)$;\\
(DLC4) For every $b\in\BBBB\ap$ there exists $a\in\BBBB$ such that
$b\le\p(a)$;\\
\noindent(DLC5) $\p(a)=\bigvee\{\p(b)\st b\in\BBBB, b\llx a\}$,
for every $a\in B$;

\medskip

{\noindent} the composition $``\diamond$" of two morphisms
$\p_1:\klam{B_1,\rho_1,\BBBB_1}\lra \klam{B_2,\rho_2,\BBBB_2}$ and
$\p_2:\klam{B_2,\rho_2,\BBBB_2}\lra \klam{B_3,\rho_3,\BBBB_3}$ of\/ $\DHLC$
is defined by the formula
\begin{equation}\label{diamcon}
\ \p_2\diamond\p_1 \df (\p_2\circ\p_1)\cuk.
\end{equation}
\end{defi}

Note that two complete LCAs are LCA-isomorphic \tiff they are $\DHLC$-isomorphic.

Let $\HLC$ (resp., $\HC$) be the category of all locally compact (resp., compact) Hausdorff spaces and all continuous maps between them. The following duality theorem for the category $\HLC$ was proved in \cite{D-AMH1-10}.

\begin{theorem}\label{lccont}{\rm(\cite{D-AMH1-10})}
The categories $\HLC$ and\/ $\DHLC$ are dually equi\-valent.
The contravariant functors which realize this duality are denoted by
$$\LAM^t:\HLC\lra\DHLC \mbox{  and } \LAM^a:\DHLC\lra\HLC.$$
The contravariant functor $\LAM^t$ is defined as follows:

$$\LAM^t(X)\df \klam{\RC(X),\rho_X,\CR(X)},$$
for every $\HLC$-object $X$, and

$$ \LAM^t(f)(G)\df \cl(f\inv(\int(G))),$$
for every $f\in\HLC(X,Y)$ and every $G\in \RC(Y)$.
\end{theorem}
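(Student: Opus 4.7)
The plan is to realize $\LAM^t$ and $\LAM^a$ as a contravariant equivalence extending de Vries' duality from $\HC$ to $\HLC$, building on Roeper's object-level bijection between locally compact Hausdorff spaces and complete LCAs (Proposition \ref{stanlocn}). Since Roeper's result already gives that $\klam{\RC(X),\rho_X,\CR(X)}$ is a complete LCA for every $X\in\card{\HLC}$ and that every complete LCA is LCA-isomorphic to one of this form, the essential work is to (i) define $\LAM^a$ on objects explicitly, (ii) define both functors on morphisms compatibly with the twisted composition $\diamond$, and (iii) produce natural isomorphisms.

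For objects, given a complete LCA $\klam{B,\rho,\BBBB}$, I would define $X\df\LAM^a(\klam{B,\rho,\BBBB})$ to be the set of bounded clusters of $\klam{B,\rho,\BBBB}$ (clusters in the sense of de Vries that contain at least one element of $\BBBB$), topologized by the base $\{\lambda(a)\st a\in\BBBB^+\}$, where $\lambda(a)\df\{\Gamma\in X\st a\in\Gamma\}$. Axiom \ref{bc1} supplies the interpolation required for regularity, \ref{bc2} yields compactness of $\cl(\lambda(a))$ for $a\in\BBBB$, and \ref{bc3} guarantees that each point has a bounded neighbourhood, so $X$ is locally compact; Hausdorffness then follows from the normality axiom \ref{c5} applied at the cluster level. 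For morphisms, given a $\DHLC$-morphism $\p:\klam{B_1,\rho_1,\BBBB_1}\lra\klam{B_2,\rho_2,\BBBB_2}$, I would define $\LAM^a(\p):X_2\lra X_1$ by sending a bounded cluster $\Gamma\in X_2$ to the bounded cluster in $X_1$ characterized by $a\in\LAM^a(\p)(\Gamma)$ iff $\p(b)\in\Gamma$ for some (equivalently, every) $b\in\BBBB_1$ with $b\ll_{\rho_1}a$. Conditions (DLC1)-(DLC4) ensure this produces a bounded cluster in $X_1$, and (DLC5) is exactly what is needed to prove continuity. For $\LAM^t$ on morphisms, one checks that $F\mapsto\cl(f\inv(\int(F)))$ satisfies (DLC1)-(DLC5) by standard regular-set manipulations.

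The natural isomorphisms are $\eta_X(x)\df\{F\in\RC(X)\st x\in F\}$ and $\theta_{\klam{B,\rho,\BBBB}}(a)\df\cl(\lambda(a))$; local compactness and Hausdorffness make $\eta_X$ a homeomorphism, while completeness of $B$ together with the interpolation provided by \ref{bc1} make $\theta$ an LCA-isomorphism, with naturality a direct check from the definitions. The main obstacle is verifying functoriality with respect to the twisted composition. Because $\DHLC$-morphisms are not Boolean homomorphisms, the naive pointwise composition of two of them generally fails to satisfy (DLC5); the ${}\cuk$-operation (\ref{cukfcon}), defined by $\psi\cuk(a)\df\bigvee\{\psi(b)\st b\in\BBBB,b\ll_\rho a\}$, is precisely the ``round'' correction that restores (DLC5). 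The identity $\LAM^a(\p_2\diamond\p_1)=\LAM^a(\p_1)\circ\LAM^a(\p_2)$ then falls out of the definition, and dually $\LAM^t(g\circ f)=\LAM^t(f)\diamond\LAM^t(g)$; the key lemma to isolate is that for any composable $\p_1,\p_2$ in $\DHLC$ and any bounded cluster $\Gamma$, the pointwise and ${}\cuk$-composed maps determine the same bounded cluster on the dual side, which is where the explicit form (\ref{cukfcon}) is indispensable.
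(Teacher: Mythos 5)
First, a point of order: the paper does not prove this theorem at all — it is imported verbatim from \cite{D-AMH1-10} — so there is no internal proof to compare your attempt against. Your outline does follow the general strategy of the cited construction (a dual space of bounded clusters, de Vries--style natural isomorphisms, the ``round'' correction of formula (\ref{cukfcon}) to restore (DLC5) under composition), and you correctly identify functoriality with respect to $\diamond$ as the main technical burden. As a roadmap it is sound.

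However, several of the concrete formulas you commit to would fail as written. (1) For clusters, the sets $\lambda(a)=\{\Gamma\st a\in\Gamma\}$ represent the \emph{regular closed} sets of the dual space; they form a closed base, not an open base. Declaring $\{\lambda(a)\st a\in\BBBB^+\}$ to be an open base generates the wrong topology (for $\RC(\IIII)$ it would make every nondegenerate closed interval open); one must take $\{\int(\lambda(a))\st a\in\BBBB^+\}$, or else work with maximal round filters rather than clusters. (2) Your formula for $\LAM^a(\p)(\Gamma)$ --- $a$ belongs to the image iff $\p(b)\in\Gamma$ for some $b\in\BBBB_1$ with $b\ll_{\rho_1}a$ --- defines a round filter (an end), not a cluster: in the topological model, if $f(x)$ lies on the boundary of $G\in\RC(Y)$ then $G\in\Gamma_{f(x)}$ while $\LAM^t(f)(H)\notin\Gamma_x$ for every $H\ll G$. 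The repair is to pass to the unique cluster containing this round filter, but that is an additional step requiring proof. The parenthetical ``(equivalently, every)'' is also false, since clusters are not filters: for disjoint $b_1,b_2\ll a$ a cluster may contain $\p(b_1)$ but not $\p(b_2)$. (3) You derive Hausdorffness from axiom (C5), but an LCA is not assumed to satisfy (C5); only (LC1)--(LC3) are available, and the separation of distinct bounded clusters must be extracted from the interpolation axiom (LC1) restricted to $\BBBB$. Finally, everything you label a direct check --- that $\LAM^t(f)$ satisfies (DLC1)--(DLC5), that $\theta$ is an LCA-isomorphism, and above all the key lemma that the pointwise composite $\p_2\circ\p_1$ and the corrected composite $\p_2\diamond\p_1$ induce the same map on bounded clusters --- is precisely the content of the cited thirty-odd-page proof and is left unverified here. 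The proposal should therefore be regarded as a plausible plan rather than a proof.
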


In particular,  for every complete LCA $\unlb\df \klam{B,\rho,\BBBB}$ and every $X\in|\HLC|$, $\unlb$ is LCA-isomorphic to $\LAM^t(\LAM^a(\unlb))$ and $X$ is homeomorphic to $\LAM^a(\LAM^t(X))$. (We do not give here the explicit definition of the contravariant functor $\LAM^a$ because we will not use it. (It is given in \cite{D-AMH1-10}.) For our purposes here, it is enough to know that the compositions $\LAM^a\circ\LAM^t$ and $\LAM^t\circ\LAM^a$ are naturally equivalent to the corresponding identity functors (see, e.g., \cite{AHS}).)

Also, the restriction of $\LAM^t$ to the subcategory $\HC$ of the category $\HLC$ coincides with the de Vries duality functor between the category $\HC$ and the full subcategory $\DHC$ of the category $\DHLC$, having as objects all NCAs.

The next theorem shows how one can construct the dual object $\LAM^t(F)$ of
a regular closed subset $F$ of a locally compact Hausdorff space
$X$ using only $F$ and the dual object $\LAM^t(X)$ of $X$.

\begin{theorem}\label{conregclo}{\rm(\cite{D-AMH2-10})}
Let $X$ be a locally compact Hausdorff space and $F\in \RC(X)$. Let
 $B\df \RC(X)_F$ be the relative algebra of $\RC(X)$ with respect to $F$, $$\BBBB\ap\df \{G\we F\st G\in \CR(X)\}$$ and,
for every $a,b\in B$,  $a\eta b \Leftrightarrow  a\rho_X b$ (i.e., $a\eta b \Leftrightarrow a\cap
b\nes$). Then $\klam{B,\eta,\BBBB\ap}$ is LCA-isomorphic to
$\LAM^t(F)$, where $F$ is regarded as a subspace of $X$.
\end{theorem}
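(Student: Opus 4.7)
The plan is to exhibit the natural inclusion map $\varphi\colon B\to \RC(F)$ defined by $\varphi(G)\df G$ (the element $G\in\RC(X)_F$ viewed as a subset of $F$) and verify it is the desired LCA-isomorphism. Throughout, the key technical facts are that $F\in\RC(X)$ implies $\int_X(F)$ is dense in $F$, and that $\cl_F(A)=\cl_X(A)\cap F$ for every $A\sbe F$.

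First I would show $\varphi$ is well defined, namely that every $G\in\RC(X)$ with $G\sbe F$ is regular closed in $F$. From $\int_X(G)\sbe\int_F(G)$ and the identity above one obtains $G=\cl_X(\int_X(G))\cap F=\cl_F(\int_X(G))\sbe\cl_F(\int_F(G))\sbe\cl_F(G)=G$, so $G=\cl_F(\int_F(G))\in\RC(F)$.

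Next I would show that $\varphi$ is a Boolean isomorphism. Injectivity is immediate. For surjectivity, given $H\in\RC(F)$, I write $\int_F(H)=W\cap F$ for some open $W\sbe X$ and set $G\df\cl_X(W\cap\int_X(F))$. Since $W\cap\int_X(F)\sbe\int_X(F)\sbe F$ and $F$ is closed, $G\sbe F$, so $G\in B$; using that $\int_X(F)$ is dense in $F$ one checks that $W\cap\int_X(F)$ is dense in $\int_F(H)$ with respect to the $F$-topology, and hence $\varphi(G)=G\cap F=\cl_F(W\cap\int_X(F))=H$. For the lattice operations, join is clear since $\cup$ is the join in both $B$ and $\RC(F)$. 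For the meet I must show $\cl_X(\int_X(G_1\cap G_2))=\cl_F(\int_F(G_1\cap G_2))$: shrinking any open $X$-witness of a point in $\int_F(G_1\cap G_2)\cap\int_X(F)$ into $\int_X(F)$ yields $\int_F(G_1\cap G_2)\cap\int_X(F)=\int_X(G_1\cap G_2)\cap\int_X(F)$, and density of $\int_X(F)$ in $F$ then forces the two closures to coincide. Since $\varphi$ is a bijective $(\we,\vee,0,1)$-preserving map between Boolean algebras, uniqueness of complements gives $\varphi(G^{*B})=\varphi(G)^{*F}$.

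Finally, I would verify the contact relation and the bounded ideal. Contact transfers tautologically: since $G_1,G_2\sbe F$, one has $G_1\eta G_2\Iff G_1\cap G_2\nes\Iff \varphi(G_1)\rho_F\varphi(G_2)$. For the bounded ideal, given $G\in\CR(X)$ the element $G\we F\sbe G$ is compact, lies in $F$, and so $\varphi(G\we F)\in\CR(F)$; conversely, any $K\in\CR(F)$ is, by the construction in the surjectivity step, of the form $\cl_X(V)$ for an open $V\sbe X$, hence is regular closed in $X$, and being compact yields $K\in\CR(X)$ with $K\we F=\cl_X(\int_X(K))=K$, so $K=\varphi(K\we F)\in\varphi(\BBBB\ap)$. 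The main obstacle is the careful bookkeeping between $X$- and $F$-interiors, which can disagree on the boundary $F\stm\int_X(F)$; the density of $\int_X(F)$ in $F$ renders these differences invisible after taking $F$-closures, and that single fact is what drives both the surjectivity of $\varphi$ and the preservation of the meet.
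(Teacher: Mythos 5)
The paper does not prove this theorem at all: it is quoted verbatim from \cite{D-AMH2-10} and used as a black box, so there is no internal proof to compare your argument against. Judged on its own, your proof is correct and complete. The identity map $\varphi\colon \RC(X)_F\to\RC(F)$ is indeed the right isomorphism, and the three places where something could go wrong — that $G\in\RC(X)$ with $G\sbe F$ is regular closed in $F$, that every $H\in\RC(F)$ arises as $\cl_X(W\cap\int_X(F))$, and that the two meets $\cl_X(\int_X(G_1\cap G_2))$ and $\cl_F(\int_F(G_1\cap G_2))$ coincide — are all handled correctly by the single observation that $\int_X(F)$ is dense in $F$ (so that $F$-interiors and $X$-interiors agree off a nowhere dense set, which taking $F$-closures ignores). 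The bookkeeping for the bounded ideal is also right: compactness is intrinsic, $\CR(X)\ni G\mapsto G\we F$ lands in $\CR(F)$, and conversely every $K\in\CR(F)$ is the $X$-closure of an $X$-open set, hence lies in $\CR(X)$ and satisfies $K=K\we F$. One small point worth making explicit if you write this up: a bijection preserving $\we$ and $\vee$ between Boolean algebras is an order isomorphism and therefore automatically preserves $0$, $1$ and complements, which is what licenses your appeal to uniqueness of complements.
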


We will also need the following definitions and assertions.
Note that for $\g\in\GA$  and  $a\in\prod\{A_\g\st\g\in\GA\}$, $a_\g$ will denote the $\g$-th coordinate of $a$.

\begin{defi}\label{prodclca}{\rm(\cite{D-AMH2-10})}
\rm Let
$\{\klam{B_\g,\rho_\g,\BBBB_\g}\st\g\in\GA\}$ be a family of LC-algebras and
$$B\df\prod\{B_\g\st\g\in\GA\}$$ be the product of the  Boolean
algebras $\{B_\g\st\g\in\GA\}$ in the category $\Bool$ of Boolean
algebras and Boolean homomorphisms.
Let $$\BBBB\df\{b\in\prod\{\BBBB_\g\st\g\in\GA\}\st
\card{\{\g\in \GA\st b_\g\neq 0\}}<\aleph_0\}.$$
For any two points $a,b\in B$,  set
$$a\rho b\Iff\mbox{  there exists } \g\in\GA\mbox{ such that }a_\g\rho_\g b_\g.$$ Then the triple
$\klam{B,\rho,\BBBB}$ is called a {\em product of the family $\{\klam{B_\g,\rho_\g,\BBBB_\g}\st\g\in\GA\}$ of LC-algebras}. We will write
$$\klam{B,\rho,\BBBB}=\prod\{\klam{B_\g,\rho_\g,\BBBB_\g}\st\g\in\GA\}.$$
\end{defi}

\begin{theorem}\label{prodclcacat}{\rm(\cite{D-AMH2-10})}
 Let
$\{\klam{B_\g,\rho_\g,\BBBB_\g}\st\g\in\GA\}$ be a family of complete LC-algebras,
$\klam{B,\rho,\BBBB}\df\prod\{\klam{B_\g,\rho_\g,\BBBB_\g}\st\g\in\GA\}$
and $\pi_\g(a)\df a_\g,$  for every
$a\in B$ and every $\g\in\GA$.
Then the source
$\{\pi_\g:\klam{B,\rho,\BBBB}\lra\klam{B_\g,\rho_\g,\BBBB_\g}\st\g\in\GA\}$
is a product of the family
$\{\klam{B_\g,\rho_\g,\BBBB_\g}\st\g\in\GA\}$ in the category $\DHLC$.
\end{theorem}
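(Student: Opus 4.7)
The plan is to verify the product axioms directly, working coordinatewise throughout. The key technical observation is that in the product LCA, $a\ll_\rho b$ iff $a_\g\ll_{\rho_\g} b_\g$ for every $\g\in\GA$, which is immediate from the componentwise definition of $\rho$ together with $(b^*)_\g=b_\g^*$. I would also use the general fact that in any complete LCA one has $a=\bigvee\{c\in\BBBB\st c\ll_\rho a\}$: if the supremum $a'$ were strictly below $a$, then applying \ref{bc3} to $a\we(a')^*$ yields a nonzero bounded $d$ both below $a'$ and below $(a')^*$, a contradiction. The main obstacle will be keeping the non-standard composition $\diamond$ aligned with the finite-support constraint on $\BBBB$; the identity $\pi_\g\diamond\p=\p_\g$ only collapses to $\pi_\g\circ\p=\p_\g$ because every $\DHLC$-morphism satisfies $\p\cuk=\p$, thanks to (DLC5).

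First I would check that $\klam{B,\rho,\BBBB}$ is a complete LCA. The Boolean product $B=\prod B_\g$ is complete with coordinatewise operations; axioms \ref{c2}--\ref{c4}, \ref{c1} and \ref{c3} for $\rho$ lift coordinatewise; and $\BBBB$ is an ideal since both coordinatewise boundedness and finite support survive under passing to smaller elements and under finite joins. Conditions \ref{bc1}--\ref{bc3} each reduce to the same axiom in a factor: for \ref{bc1}, invoke it in each $B_\g$ on the finite support of $a$ and set $b_\g=0$ off that support; for \ref{bc2} and \ref{bc3}, find a witness in the $B_\g$ produced by the contact (or the nonzero coordinate) and extend by $0$ elsewhere. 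Then each projection $\pi_\g$ is a $\DHLC$-morphism: (DLC1) and (DLC2) are immediate; (DLC3) follows from $(\pi_\g(a^*))^*=a_\g$ and the coordinatewise description of $\ll_\rho$; (DLC4) is witnessed by the element of $\BBBB$ equal to a given $b\in\BBBB_\g$ at coordinate $\g$ and $0$ elsewhere; and (DLC5) follows because the same lifting identifies $\{\pi_\g(b)\st b\in\BBBB,\ b\ll_\rho a\}$ with $\{c\in\BBBB_\g\st c\ll_{\rho_\g} a_\g\}$, whose supremum is $a_\g=\pi_\g(a)$ by the observation above.

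For the universal property, given a source $\{\p_\g:\klam{C,\eta,\CCCC}\lra\klam{B_\g,\rho_\g,\BBBB_\g}\}_{\g\in\GA}$ in $\DHLC$, I would define $\p:C\lra B$ by $\p(c)_\g\df\p_\g(c)$. Conditions (DLC1)--(DLC3) and (DLC5) for $\p$ follow from the corresponding conditions for each $\p_\g$ together with the fact that suprema and complements in $B$ are coordinatewise. For (DLC4), given $b\in\BBBB$ with finite support $F$, apply (DLC4) for each $\p_\g$ with $\g\in F$ to produce $a_\g\in\CCCC$ with $b_\g\le\p_\g(a_\g)$, and set $a\df\bigvee_{\g\in F}a_\g\in\CCCC$; then $\p(a)_\g\ge\p_\g(a_\g)\ge b_\g$ for $\g\in F$ and $b_\g=0$ otherwise, so $b\le\p(a)$. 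The identity $\pi_\g\diamond\p=\p_\g$ unfolds to $(\pi_\g\circ\p)\cuk=\p_\g$; since $\pi_\g\circ\p=\p_\g$ and $\p_\g$ satisfies (DLC5), we obtain $\p_\g\cuk=\p_\g$, as required. Uniqueness: if $\psi$ satisfies $\pi_\g\diamond\psi=\p_\g$ for every $\g$, then $\psi(c)_\g=(\pi_\g\circ\psi)\cuk(c)=\p_\g(c)=\p(c)_\g$ using coordinatewise suprema and (DLC5) for $\psi$, so $\psi=\p$.
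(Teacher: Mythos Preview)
The paper does not supply its own proof of this theorem; it is quoted verbatim from \cite{D-AMH2-10} as a preliminary result, so there is nothing in the present text to compare your argument against. That said, your direct coordinatewise verification is the natural route and is correct: the key points---that $a\ll_\rho b$ iff $a_\g\ll_{\rho_\g}b_\g$ for all $\g$, that the finite-support constraint on $\BBBB$ is exactly what makes (DLC4) work for both the projections and the mediating map, and that (DLC5) collapses $(\pi_\g\circ\p)\cuk$ to $\p_\g$ and likewise guarantees uniqueness---are all handled properly.
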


\begin{defi}\label{clcaweo}{\rm (\cite{D-AMH2-10,D-a0903-2593})}
\rm Let $\klam{B,\rho,\BBBB}$ be an LCA and $D$ be a subset of
$\BBBB$. Then we say that $D$ is   {\em dV-dense in}
$\klam{B,\rho,\BBBB}$ if for each $a,c\in\BBBB$ such that $a\llx c$,
there exists $d\in D$ with $a\le d\le c$.
\end{defi}

\begin{fact}\label{wfact}{\rm (\cite{D-AMH2-10,D-a0903-2593})}
 If $\klam{B,\rho,\BBBB}$ is an LCA and $D$ is a subset of
$\BBBB$, then $D$ is dV-dense in $\klam{B,\rho,\BBBB}$ \tiff
for each $a,c\in\BBBB$ such that $a\llx c$, there exists $d\in D$
with $a\llx d\llx c$.
\end{fact}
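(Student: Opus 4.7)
The claim is that the a priori weaker density condition ($a \le d \le c$) implies the a priori stronger one ($a \ll_\rho d \ll_\rho c$); the converse is essentially trivial.

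The easy direction ``$\Leftarrow$'' follows immediately once one recalls that in any contact algebra $\ll_\rho$ refines $\le$, i.e., $x \ll_\rho y$ implies $x \le y$. (This is axiom ($\ll$1) and is derivable from the definition $x \ll_\rho y \Iff x(-\rho)y^*$ together with axioms \ref{c1} and \ref{c4}: if $x \we y^* \neq 0$ one gets $(x\we y^*)\rho(x\we y^*)$, hence $x\rho y^*$ by \ref{c4}, contradicting $x\ll_\rho y$.)

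For the nontrivial direction ``$\Ra$'', let $a, c \in \BBBB$ with $a \ll_\rho c$. The plan is to use axiom \ref{bc1} twice in order to interpolate two bounded elements between $a$ and $c$ so that the dV-density hypothesis, which only produces an element of $D$ sandwiched by $\le$, automatically upgrades to being sandwiched by $\ll_\rho$. Concretely, applying \ref{bc1} to $a \ll_\rho c$ yields some $b_1 \in \BBBB$ with $a \ll_\rho b_1 \ll_\rho c$; applying \ref{bc1} once more to $a \ll_\rho b_1$ (with $a \in \BBBB$) yields some $b_2 \in \BBBB$ with $a \ll_\rho b_2 \ll_\rho b_1$. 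Thus
\begin{equation*}
a \ll_\rho b_2 \ll_\rho b_1 \ll_\rho c,\qquad b_2,b_1 \in \BBBB.
\end{equation*}

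Now apply the dV-density assumption to the pair $b_2 \ll_\rho b_1$: there exists $d \in D$ with $b_2 \le d \le b_1$. Using the monotonicity of $\ll_\rho$ with respect to $\le$, namely $x \ll_\rho y \le z \Ra x \ll_\rho z$ and $x \le y \ll_\rho z \Ra x \ll_\rho z$ (both of which follow from axioms \ref{c4} and \ref{c3}), one concludes $a \ll_\rho b_2 \le d$, hence $a \ll_\rho d$, and $d \le b_1 \ll_\rho c$, hence $d \ll_\rho c$. This produces the required $d \in D$ with $a \ll_\rho d \ll_\rho c$.

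I do not expect any genuine obstacle here; the proof is a one-page exercise in the LCA axioms. The only conceptual point to highlight is that axiom \ref{bc1} is precisely the feature of LCAs that allows the upgrade from $\le$-interpolation to $\ll_\rho$-interpolation inside $\BBBB$, and that the argument needs \emph{two} applications of \ref{bc1}, not one, because a single application leaves either the left or the right inequality in the form $\le$ rather than $\ll_\rho$.
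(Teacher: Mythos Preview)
Your proof is correct. The paper itself does not supply a proof of this Fact (it is stated with a citation to \cite{D-AMH2-10,D-a0903-2593}), but your argument---two successive applications of \ref{bc1} to interpolate $b_2,b_1\in\BBBB$ with $a\ll_\rho b_2\ll_\rho b_1\ll_\rho c$, then dV-density on the pair $b_2\ll_\rho b_1$, then the monotonicity axiom \ref{di3}---is exactly the standard and intended one.
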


\begin{defi}\label{defcompl}{\rm (\cite{D-AMH2-10,D-a0903-2593})}
\rm Let $\klam{B,\rho,\BBBB}$ be an LCA. A pair
$(\p,\klam{B\ap,\rho\ap,\BBBB\ap})$ is called an {\em LCA-completion}
of the LCA $\klam{B,\rho,\BBBB}$ if $\klam{B\ap,\rho\ap,\BBBB\ap}$ is a
complete LCA, $\p:\klam{B,\rho,\BBBB}\lra\klam{B\ap,\rho\ap,\BBBB\ap}$ is an LCA-embedding
and $\p(\BBBB)$ is
 dV-dense in $\klam{B\ap,\rho\ap,\BBBB\ap}$.

Two LCA-completions $(\p,\klam{B\ap,\rho\ap,\BBBB\ap})$ and
$(\psi,\klam{B'',\rho'',\BBBB''})$ of a local contact algebra
$\klam{B,\rho,\BBBB}$ are said to be {\em equivalent} if there exists
an LCA-isomorphism
$$\eta:\klam{B\ap,\rho\ap,\BBBB\ap}\lra\klam{B'',\rho'',\BBBB''}$$ such that
$\psi=\eta\circ\p$.

We define analogously the notions of {\em NCA-completion}\/ and
{\em equivalent NCA-comple\-tions}.
\end{defi}

Note that condition (LC3)
implies that if a set $D$ is  dV-dense in an LCA $\klam{B,\rho,\BBBB}$, then it is a
dense subset of $B$. Hence, if $(\p,\klam{B\ap,\rho\ap,\BBBB\ap})$ is
an LCA-completion of the LCA $\klam{B,\rho,\BBBB}$, then $(\p,B\ap)$ is
a   completion of the Boolean algebra $B$.

\begin{theorem}\label{lcacompletion}{\rm (\cite{D-AMH2-10,D-a0903-2593})}
Every local contact algebra $\klam{B,\rho,\BBBB}$ has a unique (up to
equivalence) LCA-completion $(\p,\klam{B\ap,\rho\ap,\BBBB\ap})$. If $X\df\LAM^a(\klam{B\ap,\rho\ap,\BBBB\ap})$, then
$\klam{B,\rho,\BBBB}$ has an LCA-completion of the form $(\psi,\LAM^t(X))$ which is equivalent to its LCA-completion $(\p,\klam{A\ap,\rho\ap,\BBBB\ap})$.

In particular, every normal contact algebra $\klam{B,C}$ has a unique (up to
equivalence) NCA-completion.
\end{theorem}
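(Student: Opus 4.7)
The strategy is to construct $B'$ as the Boolean MacNeille (normal) completion of $B$, which yields a canonical Boolean embedding $\p\colon B\hookrightarrow B'$ with $\p(B)$ order-dense in $B'$; then extend $\rho$ and $\BBBB$ to $B'$ and verify the LCA axioms. Explicitly, for $x,y\in B'$ I would set $x\rho' y$ iff there exist $a,b\in B$ with $\p(a)\le x$, $\p(b)\le y$ and $a\rho b$, and
\[
\BBBB'\df\{x\in B'\st \text{there exists } a\in\BBBB \text{ with } x\le\p(a)\}.
\]
Then $\BBBB'$ is an ideal of $B'$ because $\BBBB$ is an ideal of $B$ and $\p$ preserves finite joins.

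The first task is to show that $\klam{B',\rho',\BBBB'}$ is a complete LCA and that $\p$ is an LCA-embedding. Symmetry of $\rho'$ and axiom \ref{c2} are immediate; axiom \ref{c4} is a routine calculation using join preservation of $\p$ together with \ref{c4} for $\rho$. Axiom \ref{bc1} is the most delicate point: starting from $x\in\BBBB'$ and $c\in B'$ with $x\ll_{\rho'}c$, one unravels the definition of $\ll_{\rho'}$ to locate witnesses $a_0,c_0\in B$ with $a_0\in\BBBB$ and $x\le\p(a_0)\ll_\rho\p(c_0)\le c$, then applies \ref{bc1} in $\klam{B,\rho,\BBBB}$ to interpolate some $b_0\in\BBBB$ with $a_0\ll_\rho b_0\ll_\rho c_0$, and sets $b\df\p(b_0)$. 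Axioms \ref{bc2}, \ref{bc3}, \ref{c1}, \ref{c3} (and \ref{c6} in the normal case) are handled by analogous density arguments descending to $B$ and lifting back.

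I would then establish dV-density of $\p(\BBBB)$ in $\klam{B',\rho',\BBBB'}$: given $x\llx z$ in $\BBBB'$, unravel to obtain $a\ll_\rho b$ in $B$ with $x\le\p(a)$ and $\p(b)\le z$, replace $a,b$ by their meets with elements of $\BBBB$ (using \ref{bc2}) to force $a,b\in\BBBB$, and apply \ref{bc1} in $B$ twice to produce $d\in\BBBB$ with $a\ll_\rho d\ll_\rho b$; by Fact \ref{wfact} this is exactly what dV-density requires. For uniqueness, suppose $(\p_1,\klam{B_1',\rho_1',\BBBB_1'})$ and $(\p_2,\klam{B_2',\rho_2',\BBBB_2'})$ are two LCA-completions. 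Uniqueness of the Boolean MacNeille completion supplies a Boolean isomorphism $\eta\colon B_1'\lra B_2'$ with $\eta\circ\p_1=\p_2$. The density of $\p_i(B)$ in $B_i'$ and the dV-density of $\p_i(\BBBB)$ in $\BBBB_i'$ then force $\eta$ to identify $\rho_1'$ with $\rho_2'$ and $\BBBB_1'$ with $\BBBB_2'$, so $\eta$ is an LCA-isomorphism witnessing equivalence.

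For the topological form, set $X\df\LAM^a(\klam{B',\rho',\BBBB'})$. By Theorem \ref{lccont}, $X\in|\HLC|$ and the counit supplies a canonical LCA-isomorphism $\theta\colon\klam{B',\rho',\BBBB'}\lra\LAM^t(X)$; the composite $\psi\df\theta\circ\p$ then makes $(\psi,\LAM^t(X))$ an LCA-completion of $\klam{B,\rho,\BBBB}$ equivalent to $(\p,\klam{B',\rho',\BBBB'})$ via $\theta$. The final NCA assertion is the special case $\BBBB=B$: then $\BBBB'=B'$ (since $\p(B)$ is dense in $B'$), so the construction yields an NCA-completion of $\klam{B,C}$, and its uniqueness is the previously-proven LCA uniqueness specialised to improper ideals. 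I expect the main obstacle to be the careful verification of \ref{bc1} and \ref{bc2} for $\klam{B',\rho',\BBBB'}$: the interpolants and bounded witnesses that those axioms demand must be manufactured inside $B$ rather than merely in $B'$, which requires combining order-density of $\p(B)$ with the original axioms just delicately enough to descend to $B$ and then lift the chosen elements back into $B'$.
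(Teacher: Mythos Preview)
The paper does not supply a proof of this theorem; it is quoted in the preliminaries from \cite{D-AMH2-10,D-a0903-2593}. Judging your argument on its own (and against the closely related construction the paper does carry out in Theorem~\ref{nuldimlm}), there is a genuine gap: your definition of $\rho'$ points the wrong way and does not support the ``unraveling'' you later perform.

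You set $x\rho' y$ via witnesses \emph{below} $x$ and $y$; unwinding gives
\[
x\ll_{\rho'} z\ \Iff\ (\forall a,c\in B)\bigl[\p(a)\le x\text{ and }z\le\p(c)\ \Rightarrow\ a\ll_\rho c\bigr],
\]
a \emph{universal} condition. Yet in your verification of \ref{bc1} and of dV-density you claim to ``unravel $x\ll_{\rho'}z$'' and ``locate witnesses $a_0,c_0\in B$ with $x\le\p(a_0)\ll_\rho\p(c_0)\le z$'' --- an \emph{existential} statement that is the other natural extension,
\[
x\ll' z\ \Iff\ (\exists a,c\in B)\bigl[x\le\p(a),\ a\ll_\rho c,\ \p(c)\le z\bigr].
\]
These two relations differ, and yours is strictly larger. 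A clean test: take a dense subalgebra $A_0$ of a complete $A$ and start from $\klam{A_0,\rho_s^{A_0}}$ (so $\BBBB=A_0$). With your recipe, $x\ll_{\rho'}x$ holds for \emph{every} $x\in A$, since $\p(a)\le x\le\p(c)$ forces $a\le c$. But then dV-density would require, for each $x\in A$, some $d\in A_0$ with $x\le\p(d)\le x$, i.e.\ $x\in\p(A_0)$ --- false as soon as $A_0\subsetneq A$. So with your $\rho'$ the pair $(\p,\klam{A,\rho'})$ is not an LCA-completion at all (and note that the resulting $\rho'$ here is $\rho_s^{A}$, not the $\rho$ produced in Theorem~\ref{nuldimlm}). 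The fix is to use the interpolation definition of $\ll'$ displayed above (this is exactly what Theorem~\ref{nuldimlm} does in its special case); with that correction your sketches for \ref{bc1}--\ref{bc3} and dV-density go through essentially as written, and your treatment of uniqueness (via uniqueness of the MacNeille completion plus density) and of the topological reformulation via the counit of the duality $\LAM^t,\LAM^a$ is sound.
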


\section{Dimension of a  precontact algebra}

The following assertion might be known.

\begin{pro}\label{prodim}
Let $X$ be a normal $T_1$-space, and $n \in \NNN^-$. Then,
$\dim(X) \leq n$ \tiff for every finite regular open cover\/
$\UU \df  \set{U_1, \ldots, U_{n+2}}$ of $X$ there exists a regular
closed shrinking\/ $\FF \df  \set{F_1, \ldots, F_{n+2}}$ of\/ $\UU$
such that $\bigcap \FF = \ems$ (i.e., $\ord(\FF) \leq n$).
\end{pro}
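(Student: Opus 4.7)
The plan is to reduce both implications to the classical Hemmingsen-type characterization of covering dimension for normal spaces, namely, that $\dim(X)\le n$ iff every open cover of $X$ of cardinality $n+2$ admits an open (equivalently, closed) shrinking with empty total intersection. The bridge between arbitrary open covers/shrinkings and the regular open/regular closed ones required here is the following elementary fact, which I will use freely: if $V\subseteq X$ is open, then $\cl(V)$ is regular closed and $\int(\cl(V))$ is regular open.

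For the $(\Rightarrow)$ direction I would assume $\dim(X)\le n$ and take an arbitrary regular open cover $\UU=\{U_1,\dots,U_{n+2}\}$ of $X$. By the classical characterization together with the standard two-step shrinking lemma for normal spaces, there exists an open shrinking $\{V_1,\dots,V_{n+2}\}$ with $\cl(V_i)\subseteq U_i$ for each $i$ and $\bigcap_{i=1}^{n+2}\cl(V_i)=\ems$. Setting $F_i\df\cl(V_i)$, each $F_i$ is regular closed (closure of an open set), $F_i\subseteq U_i$, $\bigcup F_i=X$ since $\bigcup V_i=X$, and $\bigcap F_i=\ems$. So $\FF\df\{F_1,\dots,F_{n+2}\}$ is the required regular closed shrinking.

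For the $(\Leftarrow)$ direction I would start with an arbitrary open cover $\{U_1,\dots,U_{n+2}\}$ of $X$ and aim to produce a closed shrinking of order $\le n$, which by the Hemmingsen characterization will yield $\dim(X)\le n$. Using normality, first shrink to an open cover $\{V_1,\dots,V_{n+2}\}$ with $\cl(V_i)\subseteq U_i$. Then put $W_i\df\int(\cl(V_i))$. A short verification shows that $W_i$ is regular open, $V_i\subseteq W_i$ (so $\{W_i\}$ is a cover), and $W_i\subseteq\cl(V_i)\subseteq U_i$. Hence $\{W_1,\dots,W_{n+2}\}$ is a regular open cover of $X$ whose members lie inside the corresponding $U_i$. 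Applying the hypothesis to this cover yields a regular closed shrinking $\{F_1,\dots,F_{n+2}\}$ of $\{W_i\}$ with $\bigcap F_i=\ems$; automatically $F_i\subseteq W_i\subseteq U_i$, so $\{F_i\}$ is also a closed shrinking of $\{U_1,\dots,U_{n+2}\}$ with empty intersection. This is exactly the Hemmingsen-type condition characterizing $\dim(X)\le n$ for normal $T_1$-spaces.

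The cases $n=-1$ and the trivial case $X=\ems$ are handled directly, since $\dim(X)=-1$ iff $X=\ems$, and $n+2=1$ turns the condition into asking that the cover $\{X\}$ admits the shrinking $\{\ems\}$. The main (and essentially only) obstacle is making sure to invoke the correct formulation of the classical dimension characterization: one needs both the existence of an open shrinking $\{V_i\}$ with $\bigcap\cl(V_i)=\ems$ and $\cl(V_i)\subseteq U_i$ in the $(\Rightarrow)$ direction, and the fact that a closed shrinking with empty intersection of a cover of cardinality $n+2$ is sufficient to conclude $\dim(X)\le n$ in the $(\Leftarrow)$ direction; both of these are standard but require care in quoting.
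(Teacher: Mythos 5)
Your proof is correct and follows essentially the same route as the paper: both directions reduce to Hemmingsen's characterization via the shrinking/swelling machinery for normal spaces, using $\cl(V)$ to manufacture regular closed sets and $\int(\cl(V))$ to manufacture regular open ones. The only (inessential) difference is at the end of $(\Leftarrow)$: you invoke the closed-shrinking form of Hemmingsen's theorem directly, whereas the paper performs one further open swelling of the closed shrinking so as to quote the theorem in its open-shrinking form.
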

\begin{proof}
($\Rightarrow$) Let $\dim(X)\le n$ and $\UU \df  \set{U_1, \ldots, U_{n+2}}$ be a regular open
cover of $X$. Then, by Theorem 1.6.10 of  \cite{eng_dim}, $\UU$
has an open shrinking $\WW\df \{W_1,\ldots,W_{n+2}\}$ such that
$\bigcap\WW=\ems$. Using Theorem 1.7.8 of \cite{eng_dim}, we find
a closed shrinking $\FF\ap\df \{F_1,\ldots,F_{n+2}\}$ of $\WW$.  Now,
Theorem 3.1.2 of \cite{eng_dim} gives us an open swelling
$\VV\df \{V_1,\ldots,V_{n+2}\}$ of $\FF\ap$ such that
 $\cl(V_i)\sbe W_i$,
for every $i=1,\ldots,n+2$. Set
$\FF \df \{\cl(V_1),\ldots,\cl(V_{n+2})\}$. Then $\FF$ is a regular
closed shrinking of $\UU$ and $\bigcap\FF=\ems$.

\noindent($\Leftarrow$) Let  $\UU\ap \df  \set{U_1, \ldots, U_{n+2}}$
be an  open cover of $X$. Then, by Theorem 1.7.8 of
\cite{eng_dim}, $\UU\ap$ has a closed shrinking
$\FF\ap\df \{F_1\ap,\ldots,F_{n+2}\ap\}$. Using Theorem 3.1.2 of
\cite{eng_dim}, we obtain an open swelling
$\VV\df \{V_1,\ldots,V_{n+2}\}$ of $\FF\ap$ such that
$\cl(V_i)\sbe U_i$, for every $i=1,\ldots,n+2$. Then
$\UU\df \{\int(\cl(V_1)),\ldots,\int(\cl(V_{n+2}))\}$ is a regular
open shrinking of $\UU\ap$. By our hypothesis, $\UU$ has a regular
closed shrinking $\FF \df  \set{F_1, \ldots, F_{n+2}}$  such that
$\bigcap \FF = \ems$. Then $\FF$ is a closed shrinking of
$\UU\ap$. By Theorem 3.1.2 of \cite{eng_dim}, $\FF$ has an open
swelling $\WW\df \{W_1,\ldots,W_{n+2}\}$ such that $\cl(W_i)\sbe
U_i$ for every $i=1,\ldots,n+2$; thus, $\WW$ is an open shrinking
of $\UU\ap$ and $\bigcap\WW=\ems$. Thus, by Theorem 1.6.10 of
\cite{eng_dim}, $\dim(X)\le n$.
\end{proof}


\begin{cor}\label{prodimc}
Let $X$ be a normal $T_1$-space, and $n \in \NNN^-$. Then,
$\dim(X) \leq n$ \tiff for every finite regular open cover\/
$\UU \df  \set{U_1, \ldots, U_{n+2}}$ of $X$ there exists a regular
closed shrinking\/ $\FF \df  \set{F_1, \ldots, F_{n+2}}$ of\/ $\UU$
such that $\bigcap \FF = \ems$  and $\bigcup_{i=1}^{n+2}\int(F_i)=X$.
\end{cor}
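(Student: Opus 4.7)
The plan is to derive Corollary \ref{prodimc} essentially for free from Proposition \ref{prodim}, not by giving a new construction but by reopening the proof of the ($\Rightarrow$) direction of that proposition and noting that the shrinking it produces already satisfies the stronger covering condition.

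One direction is trivial: the property asserted in the corollary is strictly stronger than the one in Proposition \ref{prodim} (it adds the requirement $\bigcup_{i=1}^{n+2}\int(F_i)=X$), so if it holds for every regular open $(n+2)$-cover then in particular Proposition \ref{prodim}'s condition holds, and hence $\dim(X)\leq n$.

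For the converse, suppose $\dim(X)\leq n$ and let $\UU=\{U_1,\ldots,U_{n+2}\}$ be a regular open cover of $X$. I would simply run the proof of ($\Rightarrow$) of Proposition \ref{prodim}: it produces, via Theorems 1.6.10, 1.7.8 and 3.1.2 of \cite{eng_dim}, an open cover $\WW=\{W_i\}$ of $X$ with $\bigcap\WW=\ems$, a closed shrinking $\FF'=\{F_i'\}$ of $\WW$, and an open swelling $\VV=\{V_i\}$ of $\FF'$ with $\cl(V_i)\sbe W_i$ for every $i$. Setting $F_i\df\cl(V_i)$ gives a regular closed shrinking $\FF$ of $\UU$ with $\bigcap\FF\sbe\bigcap\WW=\ems$, exactly as in the proof of Proposition \ref{prodim}.

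The new observation required by the corollary is that $\bigcup_{i=1}^{n+2}\int(F_i)=X$. This comes at no cost: since $\VV$ is a swelling of the cover $\FF'$, we have $F_i'\sbe V_i$ for each $i$, so $\{V_i\}$ is itself a cover of $X$; and since each $V_i$ is open with $V_i\sbe\cl(V_i)=F_i$, we have $V_i\sbe\int(F_i)$. Therefore $X=\bigcup_i F_i'\sbe\bigcup_i V_i\sbe\bigcup_{i=1}^{n+2}\int(F_i)$, as desired. The only conceivable obstacle would be that the $F_i$ of Proposition \ref{prodim} were not obtained as closures of open sets belonging to a covering swelling, but that is precisely the route of that proof, so nothing further is needed.
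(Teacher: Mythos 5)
Your proposal is correct and matches the paper's own proof: the paper likewise obtains the forward direction by rerunning the ($\Rightarrow$) argument of Proposition \ref{prodim} and observing that the sets $F_i=\cl(V_i)$ already satisfy $\bigcup_{i=1}^{n+2}\int(\cl(V_i))=X$ (since the $V_i$ swell the closed cover $\FF'$), and the converse is immediate from Proposition \ref{prodim}.
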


\begin{proof}
($\Rightarrow$) Repeat the proof of the $``$if" part of Proposition \ref{prodim} rewriting only the last sentence of it as follows:
Then $\FF$ is a regular
closed shrinking of $\UU$,  $\bigcap\FF=\ems$ and $\bigcup_{i=1}^{n+2}\int(\cl(V_i))=X$.

\noindent($\Leftarrow$) This follows from Proposition \ref{prodim}.
\end{proof}

Having in mind the proposition above, we introduce the
notion of {\em dimension of a  precontact algebra} $\klam{B,\rho}$, denoted by  $\dim_a (\klam{B,\rho})$.
For technical reasons, we even define a more general notion.

\begin{defi}\label{defidim}
 For a precontact algebra $\klam{B,\rho}$, $n\in\NNN^-$ and a subset $D$ of $B$ such that $0,1\in D$,
  set
 \begin{gather*}
 \dim_a (D;\klam{B,\rho})\le n,
\end{gather*}
if for all $a_1,\ldots,a_{n+2}, b_1,\ldots,b_{n+2}\in D$ such that $\bigvee_{i=1}^{n+2} b_i = 1$ and $b_i \ll a_i$ for all $i=1,\ldots,n+2$, there exist $c_1,\ldots,c_{n+2},d_1,\ldots,d_{n+2}\in D$  which satisfy the following conditions:

\begin{quote}
\begin{enumerate}
\renewcommand{\theenumi}{\ensuremath{(D\arabic{enumi})}}
\item $c_i \ll d_i\ll a_i$ for every $i=1,\ldots,n+2$. \label{D1'}\label{D1}
\item $\bigvee_{i = 1}^{n+2} c_i = 1$ and $\bigwedge_{i = 1}^{n+2} d_i = 0$. \label{D2'}\label{D2}
\end{enumerate}
\end{quote}

\noindent Furthermore, set $\dim_a (D;\klam{B,\rho}) \df -1$  \tiff
$|B|=1$ (i.e., $0=1$
in $B$).
Finally, for all $n\in\NNNN$, set
\begin{gather*}
\dim_a (D;\klam{B,\rho}) \df
\begin{cases}
  n, &\text{if }n - 1 < \dim_a (D;\klam{B,\rho}) \leq n, \\
  \infty, &\text{if $n < \dim_a (D;\klam{B,\rho})$ for all $n \in \NNNN^-$.}
\end{cases}
\end{gather*}

When $D=B$, we will write simply $\dim_a(\klam{B,\rho})$ instead of $\dim_a(B;\klam{B,\rho})$.
Also, if $\klam{B,\rho,\BBBB}$ is an LCA, then we replace $\klam{B,\rho}$ in above notation  with $\klam{B,\rho,\BBBB}$.
\end{defi}

\begin{theorem}\label{thdim}
Let $(X, \TT)$ be a normal  $T_1$-space and $n\in\NNN^-$. Then,
$\dim(X) \leq n$ \tiff $\dim_a(\klam{\RC(X), \rho_X}) \leq n$.
\end{theorem}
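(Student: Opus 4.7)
The plan is to translate between the covering characterization of $\dim(X)\le n$ provided by Proposition~\ref{prodim} and Corollary~\ref{prodimc}, and the algebraic two-level condition in Definition~\ref{defidim}, through the standard dictionary for $\RC(X)$: $F\ll_{\rho_X}G$ means $F\sbe\int(G)$, $F\vee G=F\cup G$, and $F\wedge G=\cl(\int(F\cap G))$. Consequently $\bigvee F_i=1$ encodes $\bigcup F_i=X$, while $\bigwedge F_i=0$ encodes only the weaker condition $\int(\bigcap F_i)=\ems$. The degenerate case $n=-1$ is disposed of immediately, since both sides amount to $X=\ems$; I may assume $n\ge 0$.

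For $(\Ra)$, assume $\dim(X)\le n$ and fix $a_i,b_i\in\RC(X)$ with $b_i\ll_{\rho_X} a_i$ and $\bigvee b_i=1$. Then $\{\int(a_i)\}$ is a regular open cover of $X$, and Corollary~\ref{prodimc} supplies a regular closed shrinking $\{d_i\}$ with $\bigcap d_i=\ems$ and $\bigcup\int(d_i)=X$. These $d_i$ satisfy $d_i\ll_{\rho_X} a_i$ and $\bigwedge d_i=0$. To produce the $c_i$, I apply to the open cover $\{\int(d_i)\}$ the shrinking-plus-swelling trick already invoked inside the proof of Proposition~\ref{prodim} (Theorem~1.7.8 and then Theorem~3.1.2 of \cite{eng_dim}), obtaining open $V_i$ with $\cl(V_i)\sbe\int(d_i)$ and $\bigcup V_i=X$. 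Then $c_i:=\cl(V_i)$ lies in $\RC(X)$ (the closure of an open set is regular closed), satisfies $c_i\ll_{\rho_X} d_i$, and $\bigvee c_i=1$, as required.

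For $(\La)$, assume $\dim_a(\klam{\RC(X),\rho_X})\le n$ and take an arbitrary regular open cover $\{U_1,\dots,U_{n+2}\}$ of $X$. Setting $a_i:=\cl(U_i)$ gives $\int(a_i)=U_i$, and the same shrinking-plus-swelling trick produces $b_i\in\RC(X)$ with $b_i\sbe U_i$ and $\bigcup b_i=X$, so that $b_i\ll_{\rho_X} a_i$ and $\bigvee b_i=1$. The hypothesis then yields $c_i,d_i\in\RC(X)$ with $c_i\ll_{\rho_X} d_i\ll_{\rho_X} a_i$, $\bigvee c_i=1$, and $\bigwedge d_i=0$. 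The crucial point is that the $c_i$ (not the $d_i$) carry the required topological data: from $c_i\sbe\int(d_i)\sbe U_i$ and $\bigcup c_i=X$, the family $\{c_i\}$ is a regular closed shrinking of $\{U_i\}$ covering $X$; and because finite intersection commutes with interior, $\bigcap c_i\sbe\bigcap\int(d_i)=\int(\bigcap d_i)=\ems$, the last step coming from $\bigwedge d_i=0$. Proposition~\ref{prodim} then delivers $\dim(X)\le n$.

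The only subtle point, and the obstacle that determines the very shape of Definition~\ref{defidim}, is the gap between the algebraic meet $\bigwedge d_i=0$ (which captures merely $\int(\bigcap d_i)=\ems$) and the genuine topological condition $\bigcap d_i=\ems$ required by Proposition~\ref{prodim}. The two-tier layout of Definition~\ref{defidim} is tailored precisely to bridge this gap: the extra layer $c_i\ll_{\rho_X} d_i$ pushes the $c_i$ into the open interiors of the $d_i$, where the identity $\bigcap\int(d_i)=\int(\bigcap d_i)$ for finite families converts the weaker meet condition on the $d_i$ into the genuine empty intersection $\bigcap c_i=\ems$.
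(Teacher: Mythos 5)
Your proof is correct and follows essentially the same route as the paper's: both directions reduce to the covering characterization in Proposition~\ref{prodim} and Corollary~\ref{prodimc}, use the closed-shrinking and open-swelling theorems from \cite{eng_dim} to manufacture the inner layers, and exploit the extra tier $c_i\ll d_i$ to bridge the gap between $\bigwedge d_i=0$ (i.e.\ $\int(\bigcap d_i)=\ems$) and the genuine condition $\bigcap c_i=\ems$. The only cosmetic difference is that in the forward direction you unpack the second application of Proposition~\ref{prodim} into its constituent shrinking-plus-swelling steps, whereas the paper cites the proposition directly.
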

\begin{proof}

Set $B\df \RC(X)$.

\noindent($\Rightarrow$) Let $\dim(X) \leq n$. Let
$a_1,\ldots,a_{n+2}$, $b_1,\ldots,b_{n+2}\in B$, $b_i \ll a_i$ for
every $i=1,\ldots,n+2$, and $\bigvee_{i=1}^{n+2} b_i = 1$. Then
$b_i\sbe\int(a_i)$  for every $i=1,\ldots,n+2$. Since
$\bigcup_{i=1}^{n+2}b_i=X$, we obtain that
$\AA\df \{\int(a_i)\st i=1,\ldots,n+2\}$ is a regular open cover of
$X$. Then, by Corollary \ref{prodimc}, $\AA$ has a regular closed
shrinking $\DD\df \{d_1,\ldots,d_{n+2}\}$ such that
$\bigcap\DD=\ems$ and $\bigcup_{i=1}^{n+2}\int(d_i)=X$.
Now, using Proposition \ref{prodim}, we obtain a regular closed shrinking
$\CC\df \{c_1,\ldots,c_{n+2}\}$ of the regular open cover $\{\int(d_i)\st i=1,\ldots,n+2\}$ of $X$.
Then $c_i\ll d_i\ll a_i$ for every $i=1,\ldots,n+2$, $\bigvee_{i=1}^{n+2}c_i=1$ and $\bigwedge_{i=1}^{n+2}d_i=\cl(\int(\bigcap_{i=1}^{n+2}d_i))=0$.

\medskip

\noindent($\Leftarrow$) Let $\UU \df  \set{U_1, \ldots, U_{n+2}}$ be
a regular open cover of $X$. Then, by Theorem 1.7.8 of
\cite{eng_dim}, $\UU$ has a closed shrinking $\FF \df  \set{F_1,
\ldots, F_{n+2}}$. By Theorem 3.1.2 of \cite{eng_dim}, $\FF$ has
an open swelling $\VV\df \{V_1,\ldots,V_{n+2}\}$  such that
$\cl(V_i)\sbe U_i$, for every $i=1,\ldots,n+2$. Set $a_i\df \cl(U_i)$
and $b_i\df \cl(V_i)$, for every $i=1,\ldots,n+2$. Then
$b_i\sbe\int(a_i)$, i.e. $b_i\ll a_i$ for every $i=1,\ldots,n+2$.
Since $\bigcup\{\cl(V_i)\st i=1,\ldots,n+2\}=X$, we obtain that
 $\bigvee_{i=1}^{n+2} b_i = 1$. Thus, by our hypothesis, there exist
$c_1,\ldots,c_{n+2}, d_1,\ldots,d_{n+2}\in B$ such that $c_i\ll d_i\ll a_i$ for every
$i=1,\ldots,n+2$, $\bigvee_{i=1}^{n+2} c_i = 1$ and $\bigwedge_{i=1}^{n+2}d_i=0$.
Then $c_i\sbe\int(d_i)$, for every $i=1,\ldots,n+2$. Furthermore, we have that $\cl(\bigcap_{i=1}^{n+2}\int(d_i))=\cl(\int(\bigcap_{i=1}^{n+2}d_i))=\bigwedge_{i=1}^{n+2}d_i=\ems$.
Thus $\bigcap_{i=1}^{n+2}\int(d_i)=\ems$. Now we obtain that $\bigcap_{i=1}^{n+2}c_i\sbe\bigcap_{i=1}^{n+2}\int(d_i)=\ems$, and hence $\bigcap_{i=1}^{n+2}c_i=\ems$. Therefore, Proposition \ref{prodim} implies that $\dim(X)\le n$.
 \end{proof}

 \begin{cor}\label{thdimcor}
(a) If $\klam{B,\rho,\BBBB}$ is an LCA such that $\LAM^a(\klam{B,\rho,\BBBB})$ is a normal space, then $\dim_a(\klam{B,\rho,\BBBB})=\dim(\LAM^a(\klam{B,\rho,\BBBB})$.
In particular, for every NCA $\klam{B,\rho}$, we have that $\dim_a(\klam{B,\rho})=\dim(\LAM^a(\klam{B,\rho})$.

\smallskip

\noindent(b) If  $X$ is a normal locally compact $T_1$-space, then $\dim(X)=\dim_a(\LAM^t(X))$. In particular, for every compact Hausdorff space $X$, $\dim(X)=\dim_a(\LAM^t(X))$.
\end{cor}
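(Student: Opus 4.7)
The plan is to reduce both parts of the corollary to Theorem \ref{thdim} via the duality $\LAM^t \dashv \LAM^a$ of Theorem \ref{lccont}. The key preliminary observation is that Definition \ref{defidim}, when applied to an LCA $\klam{B,\rho,\BBBB}$, never actually uses the ideal $\BBBB$: the quantifiers range over $B$, and the conditions \ref{D1}, \ref{D2} only involve $\ll_\rho$ and the Boolean operations. Hence $\dim_a(\klam{B,\rho,\BBBB})=\dim_a(\klam{B,\rho})$, and moreover any LCA-isomorphism $\p:\klam{B_1,\rho_1,\BBBB_1}\lra\klam{B_2,\rho_2,\BBBB_2}$, being a CA-isomorphism of the underlying contact algebras, preserves the relation $\ll$ in both directions and therefore preserves the dimension.

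I would prove (b) first, since (a) reduces to it. Let $X$ be a normal locally compact $T_1$-space. By Theorem \ref{lccont}, $\LAM^t(X)=\klam{\RC(X),\rho_X,\CR(X)}$, so the preliminary observation gives
\[
\dim_a(\LAM^t(X))=\dim_a(\klam{\RC(X),\rho_X,\CR(X)})=\dim_a(\klam{\RC(X),\rho_X}).
\]
Since $X$ is a normal $T_1$-space, Theorem \ref{thdim} yields $\dim_a(\klam{\RC(X),\rho_X})=\dim(X)$ (by applying it at every level $n\in\NNNN^-$ and using the definition of equality of dimensions). This gives the first assertion of (b); the ``in particular'' clause is immediate because every compact Hausdorff space is normal $T_1$ and locally compact.

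For (a), set $X\df\LAM^a(\klam{B,\rho,\BBBB})$; by Theorem \ref{lccont}, $X$ is a locally compact Hausdorff space and $\klam{B,\rho,\BBBB}$ is LCA-isomorphic to $\LAM^t(X)$. By the preliminary observation, $\dim_a(\klam{B,\rho,\BBBB})=\dim_a(\LAM^t(X))$. If $X$ is normal, then part (b) gives $\dim_a(\LAM^t(X))=\dim(X)$, and combining these two equalities yields the first claim of (a). For the ``in particular'' clause about an NCA $\klam{B,\rho}$, view it as the LCA $\klam{B,\rho,B}$; as noted after Theorem \ref{lccont}, the restriction of $\LAM^t$ to $\HC$ coincides with the de Vries functor, so $\LAM^a(\klam{B,\rho,B})$ is a compact Hausdorff space, in particular normal $T_1$, and the previous case applies.

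The only real obstacle is conceptual rather than technical: one must be comfortable with the fact that $\dim_a$ is a purely contact-algebraic invariant (depending neither on the ideal $\BBBB$ nor on any auxiliary structure) and that the duality $\LAM^t\dashv\LAM^a$ transports regular open/closed covers of the space to the algebraic configurations appearing in Definition \ref{defidim}; once this is granted, the corollary is a direct corollary of Theorem \ref{thdim} with no further calculation required.
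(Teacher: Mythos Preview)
Your proposal is correct and follows essentially the same route as the paper, which simply states that the corollary follows from Theorems~\ref{thdim} and~\ref{lccont}. You have merely spelled out the details the paper leaves implicit: that $\dim_a(\klam{B,\rho,\BBBB})=\dim_a(\klam{B,\rho})$ by definition, that $\dim_a$ is preserved under LCA-isomorphism, and how the duality transports (a) to (b).
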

 \begin{proof}
 This follows from Theorems \ref{thdim} and \ref{lccont}.
 \end{proof}

The next notion is analogous to the notions of $``$dense subset" and $``$dV-dense subset" regarded, respectively, in \cite{deV} and \cite{D-AMH2-10,D-a0903-2593}.

\begin{defi}\label{devdense}
Let $\klam{B,\rho}$ be a CA. A subset $D$  of $B$ is said to be {\em DV-dense in} $\klam{B,\rho}$ if it satisfies the following condition:


(DV) If $a,b\in B$  and  $a\ll b$  then there exists $c\in D$  such that  $a\ll c\ll b$.
\end{defi}

 \begin{lm}\label{lmdim1}
 Let $\klam{B,\rho}$ be a precontact algebra and $D$ be a DV-dense in $\klam{B,\rho}$.
 Then $\dim_a(\klam{B,\rho})=\dim_a(D;\klam{B,\rho})$.
 \end{lm}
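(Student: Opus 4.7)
The plan is to fix $n \in \NNNN^-$ and prove $\dim_a(D;\klam{B,\rho}) \leq n$ if and only if $\dim_a(\klam{B,\rho}) \leq n$; equality of the two dimensions then follows by Definition \ref{defidim}. The base case $n=-1$ is immediate because each condition reduces to $|B|=1$, which is independent of $D$. Throughout I use two standard facts about contact algebras: $\ll$ implies $\leq$ (axiom $(\ll 1)$) and $\ll$ is transitive (derived from $(\ll 1)$ and $(\ll 3)$, both listed after Definition \ref{conalg}).

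For $\dim_a(D;\klam{B,\rho}) \leq n \Rightarrow \dim_a(\klam{B,\rho}) \leq n$, I would start with data $a_i, b_i \in B$ ($i=1,\ldots,n+2$) satisfying $b_i \ll a_i$ and $\bigvee_{i=1}^{n+2} b_i = 1$. Apply DV-density twice: first to $b_i \ll a_i$ to obtain $\tilde a_i \in D$ with $b_i \ll \tilde a_i \ll a_i$, then to $b_i \ll \tilde a_i$ to obtain $\tilde b_i \in D$ with $b_i \ll \tilde b_i \ll \tilde a_i$. Since $\tilde b_i \geq b_i$, we have $\bigvee \tilde b_i = 1$. Applying the hypothesis to the pairs $(\tilde a_i, \tilde b_i)$ in $D$ produces $c_i, d_i \in D \subseteq B$ with $c_i \ll d_i \ll \tilde a_i$, $\bigvee c_i = 1$, $\bigwedge d_i = 0$; transitivity of $\ll$ then yields $d_i \ll a_i$, so $(c_i, d_i)$ witnesses $\dim_a(\klam{B,\rho}) \leq n$.

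The reverse implication is symmetric. Given $a_i, b_i \in D$ with $b_i \ll a_i$ and $\bigvee b_i = 1$, apply the $B$-level hypothesis directly (since $D \subseteq B$) to get $c_i, d_i \in B$ satisfying (D1) and (D2). Then pull these back into $D$ by two applications of DV-density: to $c_i \ll d_i$ to obtain $\tilde d_i \in D$ with $c_i \ll \tilde d_i \ll d_i$, and to $c_i \ll \tilde d_i$ to obtain $\tilde c_i \in D$ with $c_i \ll \tilde c_i \ll \tilde d_i$. Since $\tilde c_i \geq c_i$ and $\tilde d_i \leq d_i$, we have $\bigvee \tilde c_i = 1$ and $\bigwedge \tilde d_i \leq \bigwedge d_i = 0$. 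Combined with $\tilde d_i \ll a_i$ (by transitivity through $d_i \ll a_i$), the pair $(\tilde c_i, \tilde d_i) \in D^{n+2} \times D^{n+2}$ verifies $\dim_a(D;\klam{B,\rho}) \leq n$.

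I do not expect a genuine obstacle; the argument is essentially a symmetric double-interpolation. The only point requiring care is that the interpolated elements preserve the supremum-$1$ and infimum-$0$ conditions, and this is exactly where the identities $\tilde b_i \geq b_i$, $\tilde c_i \geq c_i$, $\tilde d_i \leq d_i$ come in. These are automatic from $\ll {\Rightarrow} \leq$, so no further structure on $\klam{B,\rho}$ beyond a contact algebra (the context in which DV-density is defined) is needed.
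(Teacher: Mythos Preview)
Your proof is correct and follows essentially the same approach as the paper's: in each direction you use DV-density to interpolate twice (obtaining a chain $b_i \ll \tilde b_i \ll \tilde a_i \ll a_i$, respectively $c_i \ll \tilde c_i \ll \tilde d_i \ll d_i$), then invoke the dimension hypothesis on the interpolated data and use $\ll \Rightarrow \le$ together with transitivity of $\ll$ to transfer the join-$1$ and meet-$0$ conditions. The paper's argument is the same double-interpolation with different variable names; your observation that the lemma really lives in the contact-algebra setting (where $(\ll 1)$ and hence transitivity are available) matches the paper's implicit use of these facts.
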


 \begin{proof}
 If $\dim_a(D;\klam{B,\rho})=\infty$ then $\dim_a(\klam{B,\rho})\le\dim_a(D;\klam{B,\rho})$. Suppose that $\dim_a(D;\klam{B,\rho})=n$, where $n\in\NNNN^-$ and let
 $a_1,\ldots,a_{n+2}, b_1,\ldots,b_{n+2}\in B$ be such that $\bigvee_{i=1}^{n+2} b_i = 1$ and $b_i \ll a_i$ for all $i=1,\ldots,n+2$. Then, by (DV), there exist
 $c_1,\ldots,c_{n+2}, d_1,\ldots,d_{n+2}\in D$  such that  $b_i \ll c_i\ll d_i\ll a_i$ for all $i=1,\ldots,n+2$. Obviously, we have that $\bigvee_{i=1}^{n+2} c_i = 1$.
 Thus there exist $c_1\ap,\ldots,c_{n+2}\ap, d_1\ap,\ldots,d_{n+2}\ap\in D$ such that $c_i\ap\ll d_i\ap\ll d_i$ for all $i=1,\ldots,n+2$, $\bigvee_{i=1}^{n+2} c_i\ap = 1$ and $\bigwedge_{i=1}^{n+2} d_i\ap = 0$. Since $c_i\ap\ll d_i\ap\ll a_i$ for all $i=1,\ldots,n+2$ and $D\sbe B$, we obtain that $\dim_a(\klam{B,\rho})\le n$. So, we have proved that $\dim_a(\klam{B,\rho})\le\dim_a(D;\klam{B,\rho})$.

  For the other direction, let us prove that $\dim_a(\klam{B,\rho})\ge\dim_a(D;\klam{B,\rho})$. Obviously, if $\dim_a(\klam{B,\rho})=\infty$ then $\dim_a(D;\klam{B,\rho})\le\dim_a(\klam{B,\rho})$. Now, suppose that $\dim_a(\klam{B,\rho})=n$, where $n\in\NNNN^-$, and let
 $a_1,\ldots,a_{n+2}, b_1,\ldots,b_{n+2}\in D$ be such that $\bigvee_{i=1}^{n+2} b_i = 1$ and $b_i \ll a_i$ for all $i=1,\ldots,n+2$. Then
 there exist $c_1\ap,\ldots,c_{n+2}\ap, d_1\ap,\ldots,d_{n+2}\ap\in B$ such that $c_i\ap\ll d_i\ap\ll a_i$ for all $i=1,\ldots,n+2$, $\bigvee_{i=1}^{n+2} c_i\ap = 1$ and $\bigwedge_{i=1}^{n+2} d_i\ap = 0$. Now, by (DV),
 there exist
 $c_1,\ldots,c_{n+2}, d_1,\ldots,d_{n+2}\in D$  such that  $c_i\ap \ll c_i\ll d_i\ll d_i\ap$ for all $i=1,\ldots,n+2$. Obviously, we have  $\bigvee_{i=1}^{n+2} c_i = 1$ and $\bigwedge_{i=1}^{n+2} d_i = 0$.
   Since $c_i\ll d_i\ll a_i$ for all $i=1,\ldots,n+2$, we obtain  $\dim_a(D;\klam{B,\rho})\le n$. So, we have proved that $\dim_a(D;\klam{B,\rho})\le\dim_a(\klam{B,\rho})$, and therefore, $\dim_a(\klam{B,\rho})=\dim_a(D;\klam{B,\rho})$.
 \end{proof}

 \begin{theorem}\label{thcompldim}
 Let $\klam{B,C}$ be an NCA and $(\p,\klam{B\ap,C\ap})$ be the NCA-completion of it. Then $\dim_a(\klam{B,C})=\dim_a(\klam{B\ap,C\ap})$.
 \end{theorem}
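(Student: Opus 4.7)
The plan is to deduce this from Lemma \ref{lmdim1}, applied to the subset $D \df \p(B)$ of $B\ap$, after transferring the dimension computation on $\p(B)$ back to $B$ via the embedding $\p$.

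First I would observe that, since an NCA $\klam{B,C}$ can be regarded as the LCA $\klam{B,C,B}$, the NCA-completion $(\p, \klam{B\ap,C\ap})$ coincides with the LCA-completion $(\p, \klam{B\ap, C\ap, B\ap})$. The dV-density of $\p(B)$ in $\klam{B\ap, C\ap, B\ap}$ (from Definition \ref{defcompl}) combined with Fact \ref{wfact} gives exactly condition (DV) of Definition \ref{devdense}, so $\p(B)$ is DV-dense in $\klam{B\ap, C\ap}$. An application of Lemma \ref{lmdim1} then yields
\begin{equation*}
\dim_a(\klam{B\ap, C\ap}) = \dim_a(\p(B);\klam{B\ap, C\ap}).
\end{equation*}

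Next I would show $\dim_a(\p(B);\klam{B\ap, C\ap}) = \dim_a(\klam{B, C})$. The key point is that $\p$, being an LCA-embedding, is an injective Boolean homomorphism that both preserves and reflects the contact relation; hence it also preserves and reflects $\ll$, and, being a Boolean homomorphism, it preserves finite joins and meets as well as the constants $0$ and $1$. Consequently, for any $a_1,\dots,a_{n+2}, b_1,\dots,b_{n+2}\in B$, the relations $b_i\ll a_i$ and $\bigvee_{i=1}^{n+2}b_i = 1$ hold in $B$ if and only if the corresponding relations hold for $\p(a_i),\p(b_i)$ in $\p(B)$, and similarly for the witness conditions \ref{D1} and \ref{D2}. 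Thus witnesses $c_i,d_i\in B$ for $\klam{B,C}$ correspond under $\p$ to witnesses $\p(c_i),\p(d_i)\in\p(B)$ for $\klam{B\ap,C\ap}$, and conversely one pulls back via $\p\inv$ (which is defined on $\p(B)$). The case $n=-1$ is handled by the fact that $|B|=1$ iff $|B\ap|=1$: if $B$ is trivial then $\p(B)=\{0\}$ is dense in $B\ap$, forcing $B\ap$ to be trivial as well.

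Putting the two equalities together gives $\dim_a(\klam{B,C})=\dim_a(\klam{B\ap,C\ap})$. I expect the only mildly delicate step to be the bookkeeping in the transfer between $B$ and $\p(B)$, especially checking that the meet-to-zero condition $\bigwedge d_i=0$ is genuinely preserved (a finite-meet statement, so preserved by the Boolean homomorphism $\p$ and reflected by its injectivity); everything else is immediate from the definition of LCA-embedding and the DV-density of $\p(B)$.
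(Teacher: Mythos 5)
Your proposal is correct and follows essentially the same route as the paper: the paper likewise observes that $\p(B)$ is DV-dense in $\klam{B\ap,C\ap}$ (via Definition \ref{defcompl} and Fact \ref{wfact}), applies Lemma \ref{lmdim1} to get $\dim_a(\klam{B\ap,C\ap})=\dim_a(\p(B);\klam{B\ap,C\ap})$, and then invokes the equality $\dim_a(\klam{B,C})=\dim_a(\p(B);\klam{B\ap,C\ap})$, which the paper states without proof and you justify in detail (correctly) by noting that the LCA-embedding $\p$ preserves and reflects $\ll$, finite joins and meets, and the constants.
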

 \begin{proof}
 By Definition \ref{defcompl} and Fact \ref{wfact}, $\p(B)$ is a DV-dense subset of $B\ap$. Thus, by Lemma \ref{lmdim1}, $\dim_a(\klam{B\ap,C\ap})=\dim_a(\p(B);\klam{B\ap,C\ap})$.
 Hence, our assertion follows from the fact that $\dim_a(\klam{B,C})=\dim_a(\p(B);\klam{B\ap,C\ap})$.
 \end{proof}

 \begin{pro}\label{badim}
 Let $B$ be a non-degenerate Boolean algebra (i.e., $|B|>1$). Then $\dim_a(\klam{B,\rho_s})=0=\dim_a(\klam{B,\rho_l})$ (see Example \ref{extrcr} for $\rho_s$ and $\rho_l$).
 \end{pro}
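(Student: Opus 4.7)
The plan is to verify the definition of dimension directly in both cases. Since $|B|>1$, i.e. $0\ne 1$ in $B$, we have $\dim_a(\klam{B,\rho_s})\ne -1$ and $\dim_a(\klam{B,\rho_l})\ne -1$. Hence it suffices to check that both dimensions are $\le 0$, i.e. that for $n=0$ the condition in Definition \ref{defidim} is satisfied: given $a_1,a_2,b_1,b_2\in B$ with $b_1\vee b_2=1$ and $b_i\ll a_i$ $(i=1,2)$, we must produce $c_1,c_2,d_1,d_2\in B$ satisfying \ref{D1} and \ref{D2}.

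I would first handle $\rho_s$. Recall from Example \ref{extrcr} that $a\ll_{\rho_s}b$ iff $a\le b$. So the assumption reduces to $b_i\le a_i$, and we need elements with $c_i\le d_i\le a_i$, $c_1\vee c_2=1$ and $d_1\wedge d_2=0$. Simply set
\[
c_1\df d_1\df b_1,\qquad c_2\df d_2\df b_1^*.
\]
Then $d_1\wedge d_2=b_1\wedge b_1^*=0$ and $c_1\vee c_2=b_1\vee b_1^*=1$. Also $b_1\le a_1$, and since $b_1\vee b_2=1$ we have $b_1^*\le b_2\le a_2$, which gives $d_2\le a_2$. Thus \ref{D1} and \ref{D2} both hold.

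Next I would handle $\rho_l$. Here $a\ll_{\rho_l}b$ iff $a(-\rho_l)b^*$ iff ($a=0$ or $b^*=0$), i.e. ($a=0$ or $b=1$). Since $b_1\vee b_2=1$, at least one of $b_1,b_2$ is nonzero; the corresponding index $i$ then forces $a_i=1$. Without loss of generality, $a_1=1$. Set
\[
c_1\df d_1\df 1,\qquad c_2\df d_2\df 0.
\]
Clearly $c_1\vee c_2=1$ and $d_1\wedge d_2=0$. For \ref{D1}: $1\ll_{\rho_l}1\ll_{\rho_l}1$ since $1(-\rho_l)0$ (because $0=0$), and $0\ll_{\rho_l}0\ll_{\rho_l}a_2$ because any relation of the form $0\ll_{\rho_l}x$ holds ($0(-\rho_l)x^*$ since $0=0$).

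There is no real obstacle here; the proof is a direct construction in each of the two concrete contact relations. The only point requiring a moment's thought is the $\rho_l$ case, where one must observe that the hypothesis $b_1\vee b_2=1$ together with the very restrictive shape of $\ll_{\rho_l}$ forces at least one of the $a_i$ to equal $1$, after which a trivial choice of the $c_i,d_i$ works.
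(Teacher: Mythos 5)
Your proof is correct and follows essentially the same route as the paper's: a direct verification of the $n=0$ condition in Definition \ref{defidim} for each of the two concrete relations. Your witnesses are slightly different (you use $b_1,b_1^*$ for $\rho_s$ where the paper uses $a_2^*\wedge a_1,a_2$, and you replace the paper's four-case analysis for $\rho_l$ by the observation that some $b_i\neq 0$ forces $a_i=1$, plus a symmetry argument), but the method is the same and your version is, if anything, a little cleaner.
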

 \begin{proof}
 Since $|B|>1$, we have  $\dim_a(\klam{B,\rho_s})>-1$ and $\dim_a(\klam{B,\rho_l})>-1$. So, we need to show that $\dim_a(\klam{B,\rho_s})\le 0$ and $\dim_a(\klam{B,\rho_l})\le 0$.

 We will first prove that $\dim_a(\klam{B,\rho_s})\le 0$. Recall that in $\klam{B,\rho_s}$, $a\ll b$ \tiff $a\le b$.
 So, let $a_1,a_2,b_1,b_2\in B$, $b_1\vee b_2=1$ and $b_i\le a_i$ for $i=1,2$. Then $a_1\vee a_2 = 1$. Set $a\df a_1\we a_2$, $c_1=d_1\df a^*\we a_1$ and $c_2=d_2\df a_2$.
 Then $c_1\le d_1\le a_1$, $c_2\le d_2\le a_2$, $c_1\vee c_2=(a^*\we a_1)\vee a_2= ((a_1^*\vee a_2^*)\we a_1)\vee a_2=(a_2^*\we a_1)\vee a_2= (a_1\vee a_2)\we(a_2\vee a_2^*)=1$ and $d_1\we d_2=(a^*\we a_1)\we a_2=(a_1\we a_2)^*\we(a_1\we a_2)=0$. Thus, $\dim_a(\klam{B,\rho_s})\le 0$, and altogether $\dim_a(\klam{B,\rho_s})=0$.

  Next, we will  prove that $\dim_a(\klam{B,\rho_l})\le 0$. It is easy to see that in $\klam{B,\rho_l}$, $a\ll b$ \tiff $a=0$ or $b=1$.
 So, let $a_1,a_2,b_1,b_2\in B$, $b_1\vee b_2=1$ and $b_i\ll a_i$ for $i=1,2$. Then $b_i=0$ or $a_i=1$, for $i=1,2$. We will consider all possible cases.

 {\em Case 1.} Let $b_1=0$. Then $b_2=1$ and hence $a_2=1$. However, $a_1$ could be equal to $0$ or to $1$. In both cases,
   setting $c_1=d_1\df 0$ and $c_2=d_2\df 1$, we obtain  $\dim_a(\klam{B,\rho_l})\le 0$.

{\em Case 2.} Let $b_2=0$. Then we argue analogously (just interchange the indices).

{\em Case 3.} Let $a_1=1$. Since $a_1\vee a_2=1$, $a_2$ could be equal to $0$ or to $1$.

{\em Case 3a.} Let $a_2=0$. Setting $c_1=d_1\df 1$ and $c_2=d_2\df 0$, we obtain  $\dim_a(\klam{B,\rho_l})\le 0$.

{\em Case 3b.} Let $a_2=1$. Setting $c_1=d_1\df 0$ and $c_2=d_2\df 1$, we obtain  $\dim_a(\klam{B,\rho_l})\le 0$.

 {\em Case 4.} Let $a_2=1$. Then we argue analogously to Case 3 (just interchange the indices).

 Thus, we have shown  that $\dim_a(\klam{B,\rho_l})=0$.
 \end{proof}

It is well known that for a normal $T_1$-space $X$ and a regular closed subset $M$ of $X$, $\dim(M) \leq \dim(X)$ holds (this is true even for closed subsets $M$ of $X$, see e.g.  \cite{eng_dim}). According to Theorems \ref{lccont} and \ref{thdim}, the dual of this assertion is the following one:  if $X$ is a normal  locally compact $T_1$-space  and $M\in RC(X)$, then $\dim_a(\LAM^t(M))\le \dim_a(\LAM^t(X))$. Theorem \ref{conregclo} describes the LCA $\LAM^t(M)$ in terms of the LCA $\LAM^t(X)$, so that we can reformulate the above statement in a purely algebraic terms. We will supply this new statement with an algebraic proof, obtaining in this way an algebraic generalization of the topological statement stated above.  (Note that we will just take an LCA without requiring that it is dual to a {\em normal}\/  locally compact $T_1$-space.)

\begin{pro}\label{pro:dimrel}
Suppose that $\klam{B, \rho, \BBBB}$ is an LCA, $m \in B^+$, and $\klam{B_m, \rho_m, \BBBB_m}$ is the relative LCA of $\klam{B, \rho, \BBBB}$, i.e.

\begin{gather*}
\rho_m \df \rho \restrict B_m^2, \ \BBBB_m \df \set{b \we m: b \in \BBBB}.
\end{gather*}

Then, $\dim_a(\klam{B_m, \rho_m, \BBBB_m}) \leq \dim_a(\klam{B, \rho, \BBBB})$.
\end{pro}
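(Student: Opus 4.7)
The plan is to lift the given system $\{(a_i,b_i)\}_{i=1}^{n+2}\subset B_m$ to a system in $B$, apply the hypothesis $n\df\dim_a(\klam{B,\rho,\BBBB})$ to the lifted system, and project the resulting witnesses back to $B_m$ via $x\mapsto x\wedge m$. The case $m=1$ is trivial and the case $n=\infty$ is vacuous, so I assume $m\neq 1$ and $n$ finite.

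The first step is the routine translation $x\ll_{\rho_m} y\iff x\,(-\rho)\,(m\wedge y^*)$ for $x,y\in B_m$. Setting $A_i\df a_i\vee m^*$ in $B$ gives $A_i^*=m\wedge a_i^*$, so $b_i\ll_{\rho_m}a_i$ upgrades to $b_i\ll_\rho A_i$ in $B$, and $\bigvee_i A_i=1$ because $\bigvee_i a_i\geq\bigvee_i b_i=m$. To actually invoke $\dim_a\leq n$ I also need a family $\{B_i\}$ with $B_i\ll_\rho A_i$ and $\bigvee_i B_i=1$; since $\bigvee_i b_i=m\neq 1$, I would handle the $m^*$-defect at the last slot only by setting $A_{n+2}\df 1$, $B_{n+2}\df b_{n+2}\vee m^*$ and keeping $A_i\df a_i\vee m^*$, $B_i\df b_i$ for $i\leq n+1$. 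Then $B_i\ll_\rho A_i$ holds at every slot (trivially at $i=n+2$, since $A_{n+2}^*=0$), and $\bigvee_i B_i=m\vee m^*=1$.

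Invoking the hypothesis supplies $C_i,D_i\in B$ with $C_i\ll_\rho D_i\ll_\rho A_i$, $\bigvee_i C_i=1$, and $\bigwedge_i D_i=0$. Put $c_i\df C_i\wedge m$ and $d_i\df D_i\wedge m$ in $B_m$. Distributing $\wedge m$ through finite joins and meets gives at once $\bigvee_i c_i=m\wedge 1=m$ and $\bigwedge_i d_i=m\wedge 0=0$. The relation $c_i\ll_{\rho_m}d_i$ is obtained from $C_i\ll_\rho D_i$ via the identity $m\wedge d_i^*=m\wedge D_i^*$ together with monotonicity of $\rho$, and for each $i\leq n+1$ the relation $d_i\ll_{\rho_m}a_i$ follows in exactly the same way from $D_i\ll_\rho A_i=a_i\vee m^*$.

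The main obstacle is the condition $d_{n+2}\ll_{\rho_m}a_{n+2}$ at the absorbed slot: since $A_{n+2}=1$, the bound $D_{n+2}\ll_\rho A_{n+2}$ carries no information about $a_{n+2}$. Closing this gap is the delicate heart of the argument. One route is to replace $d_{n+2}$ a posteriori by a strictly smaller element that is genuinely $\ll_{\rho_m}a_{n+2}$---for instance one built from the given witness $b_{n+2}\ll_{\rho_m}a_{n+2}$---and to update $c_{n+2}$ accordingly; the condition $\bigwedge_i d_i=0$ is automatically preserved whenever the new $d_{n+2}$ lies below the old, and one must re-verify the cover condition $\bigvee_i c_i=m$. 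A second route, which is the first place the LCA ideal $\BBBB$ enters the argument, is to keep $A_{n+2}\df a_{n+2}\vee m^*$ and invoke axiom \ref{bc3} to express $m^*$ as a join of bounded elements $e$ with $e\ll_\rho m^*$---so that $e\,(-\rho)\,m$ and hence $e\,(-\rho)\,(m\wedge a_i^*)$ for every $i$---thereby spreading $m^*$ across the slots without the boundary-contact obstruction that forced $A_{n+2}=1$ in the first place.
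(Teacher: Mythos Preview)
Your proposal is incomplete: you set up the lift--apply--project framework correctly, but you never actually close the gap at the slot $i=n+2$, and neither of your two suggested routes is carried through. As it stands, the argument stops precisely where the work is.

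The paper follows the same overall strategy, but with a \emph{uniform} lift: it puts $a_i' \df a_i \vee m^*$ and $b_i' \df b_i \vee m^*$ for \emph{every} $i\in\{1,\dots,n+2\}$, so that no slot is absorbed to $1$ and there is no special case to patch at the end. The step you are missing---and the reason you were forced to set $A_{n+2}=1$---is the claim that $a_i^* \wedge m \ll_\rho m$, equivalently $m^*\,(-\rho)\,(a_i^* \wedge m)$, for every $i$. The paper obtains this from $b_i \ll_m a_i$ by taking the contrapositive $a_i^{*_m} \ll_m b_i^{*_m}$ in $B_m$, rewriting it as $(a_i^*\wedge m)\ll (b_i^*\wedge m)$, and then invoking \ref{di3} with $b_i^*\wedge m\le m$. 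Once $m^*\,(-\rho)\,(a_i^* \wedge m)$ is available, it combines with the hypothesis $b_i\,(-\rho)\,(a_i^* \wedge m)$ to give $(b_i \vee m^*)\,(-\rho)\,(a_i^* \wedge m)$, i.e.\ $b_i' \ll_\rho a_i'$, at every slot. In other words, the ``boundary-contact obstruction'' you flag in your second route is exactly what the paper asserts does not occur; with that in hand the projection $x\mapsto x\wedge m$ turns $D_i\ll_\rho a_i'$ directly into $d_i\ll_m a_i$ for all $i$, and your residual obstacle never arises. You may wish to scrutinise the passage from $a_i^{*_m}\ll_m b_i^{*_m}$ to $(a_i^*\wedge m)\ll (b_i^*\wedge m)$ in $B$ carefully, since $\ll_m$ and $\ll$ are computed with different complements.
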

\begin{proof}
Recall that $B_m\df \{b\in B\st b\le m\}$. We denote the complement in $B_m$ by ${}^{*_m}$, i.e. $a^{*_m} \df a^* \land m$. Note that, for $a,b\in B_m$, $a\ll_m b$ means that $a(-\rho)b^{*_m}$, i.e.,
$a(-\rho)(b^*\we m)$. Clearly, if $a,b\in B_m$ and  $a\ll_m b$, then $b^{*_m}\ll_m a^{*_m}$.

Let $\dim_a(\klam{B, \rho, \BBBB}) = n$, and suppose that $a_1, \ldots, a_{n+2}, b_1, \ldots, b_{n+2} \in B_m$ are such that $\bigvee_{i=1}^{n+2} b_i = m$ and $b_i \ll_m a_i$.
Then $a_i^* \land m \ll m$. Indeed, we have that $b_i \ll_m a_i$ for $i=1,\ldots, n+2$. Thus $a_i^{*_m} \ll_m b_i^{*_m}$, i.e. $(a_i^*\we m)\ll (b_i^*\we m)$ for $i=1,\ldots, n+2$. Since $b_i^*\we m\le m$ for $i=1,\ldots, n+2$, \ref{di3} implies that $a_i^*\we m\ll m$ for $i=1,\ldots, n+2$.

Now, set $$a_i' \df a_i \lor m^* \mbox{ and }b_i' \df b_i \lor m^*$$ for all $1 \leq i \leq n+2$; clearly, $\sum_{i=1}^{n+2} b_i' = 1$. Furthermore, $b_i' \ll a_i'$. Indeed,  assume not; then $b_i' \rho {(a_i')}^*$, i.e. $(b_i \lor m^*) \rho (a_i^* \land m)$. If $b_i \rho (a_i^* \land m)$, then $b_i \not\ll_m a_i$, and if $m^* \rho (a_i^* \land m)$, then $a_i^* \land m \not\ll m$, a contradiction in both cases.

 Since $\dim_a(\klam{B, \rho, \BBBB}) = n$, there exist $c_1, \ldots, c_{n+2}, d_1 \ldots, d_{n+2} \in B$ such that $$\bigvee_{i = 1}^{n+2} c_i = 1,\ \   \bigwedge_{i = 1}^{n+2} d_i = 0, \mbox{ and }c_i \ll d_i\ll a_i'$$ for every $i=1,\ldots,n+2$.  Set $$s_i \df c_i \land m \ \ \mbox{ and }\ \ t_i \df d_i \land m.$$ Clearly, $\bigvee_{i = 1}^{n+2} s_i = m$ and $\bigwedge_{i = 1}^{n+2} t_i = 0$. All that is left to show is $s_i \ll_m t_i \ll_m a_i$. We have that
\begin{xalignat*}{2}
s_i \ll_m t_i &\Iff s_i(-\rho)t_i^{*_m}
\\
&\Iff s_i(-\rho) (t_i^* \land m), \\
&\Iff (c_i \land m)(-\rho)((d_i \land m)^* \land m) \\
&\Iff (c_i \land m)(-\rho)(d_i^* \land m).
\end{xalignat*}
Now, $(c_i \land m)(-\rho)(d_i^* \land m)$ is implied by $c_i \ll d_i$.

Similarly,
\begin{xalignat*}{2}
t_i \ll_m a_i &\Iff t_i(-\rho) (a_i^* \land m), \\
&\Iff (d_i \land m)(-\rho)(a_i^* \land m).
\end{xalignat*}
Since $d_i \ll a_i'$, i.e. $d_i(-\rho)(a_i \lor m^*)^*$, we see that $(d_i \land m)\rho(a_i^* \land m)$ is impossible, and it follows that $t_i \ll_m a_i$.
\end{proof}

\section{Weight of a  local contact algebra}

 In this section, we are going to define the notions of base and weight of an LCA $\underline{B}\df  \klam{B, \rho, \BBBB}$ in such a way that if $\underline{B}$ is complete, then the weight of $\underline{B}$ is equal to the weight of the space $\LAM^a(\underline{B})$, equivalently, if $X$ is a locally compact Hausdorff space, then the weight of $X$ is equal to the weight of $\LAM^t(X)$. Clearly, the main step is to define an adequate notion of base for a complete LCA
 $\underline{B}$.  In doing this, we
 use the fact that the family $\RO(X)=\{\int(F)\st F\in\RC(X)\}$ is an open base for $X$ (because $X$ is regular) and hence, by the
Alexandroff-Urysohn theorem \cite[Theorem 1.1.15]{E}, $\RO(X)$ has
a subfamily $\BB$, with $|\BB|=w(X)$, which is a
base for $X$.

The next definition and theorem generalize the analogous
definition and theorem of de Vries \cite{deV}. Note that our
$``$\/\/base" (see the definition below) appears in \cite{deV} (for
NCAs) as $``$dense set".

\begin{defi}\label{clcawe}
\rm Let $\klam{B,\rho,\BBBB}$ be an LCA and $D$ be a subset of
$\BBBB$. Then $D$ is called a {\em base}
for $\klam{B,\rho,\BBBB}$ if it is  dV-dense in $\klam{B,\rho,\BBBB}$.
The cardinal
number
$$w_a(\klam{B,\rho,\BBBB})\df \min\{\card{D} \st D \mbox{ is a base for } \klam{B,\rho,\BBBB}\}$$
is called the {\em weight of} $\klam{B,\rho,\BBBB}$.
\end{defi}

\begin{lm}\label{weightpiwlm1}
Let $X\in|\HLC|$ and $\DD$ be a base for the LCA $\LAM^t(X)$. Then $$\BB_{\DD}\df\{\int(F)\st F\in\DD\}$$ is a base for $X$.
\end{lm}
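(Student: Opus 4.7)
The members of $\BB_\DD$ are open (interiors of regular closed sets), so all I need to show is that for every $x \in X$ and every open neighborhood $U$ of $x$ there exists $F \in \DD$ with $x \in \int(F) \subseteq U$.

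The plan is to sandwich $x$ inside two nested compact regular closed neighborhoods contained in $U$, and then invoke the dV-density of $\DD$. Since $X$ is locally compact Hausdorff, it is regular and each point has a base of compact neighborhoods. So I first choose open sets $W', W$ with $x \in W' \subseteq \cl(W') \subseteq W \subseteq \cl(W) \subseteq U$ such that $\cl(W)$ is compact (and hence so is $\cl(W')$). Setting $a \df \cl(W')$ and $c \df \cl(W)$, the standard verification that the closure of an open set is regular closed (namely $W \subseteq \int(\cl(W))$ gives $\cl(W) \subseteq \cl(\int(\cl(W))) \subseteq \cl(W)$) shows $a, c \in \CR(X)$. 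Moreover $a \subseteq W \subseteq \int(c)$, which is exactly $a \ll_{\rho_X} c$.

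Now I apply the hypothesis that $\DD$ is a base for $\LAM^t(X) = \klam{\RC(X), \rho_X, \CR(X)}$, i.e., $\DD$ is dV-dense in this LCA. Combined with Fact~\ref{wfact}, this yields some $F \in \DD$ with $a \ll_{\rho_X} F \ll_{\rho_X} c$, i.e., $a \subseteq \int(F)$ and $F \subseteq \int(c)$. Then
\[
x \in \int(a) \subseteq a \subseteq \int(F) \subseteq F \subseteq \int(c) \subseteq c \subseteq U,
\]
so $\int(F) \in \BB_\DD$ is an open neighborhood of $x$ contained in $U$, as required.

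There is essentially no obstacle here beyond the bookkeeping of unwinding the definitions; the only step that uses a genuine topological hypothesis on $X$ is the construction of the nested compact regular closed neighborhoods $a \subseteq \int(c) \subseteq c \subseteq U$, which relies on $X$ being locally compact and Hausdorff (and thus regular). Everything else is a direct translation between $\ll_{\rho_X}$ and interior inclusion in the standard contact algebra.
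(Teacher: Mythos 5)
Your proof is correct and follows essentially the same route as the paper's: sandwich $x$ between two nested compact regular closed sets $a \ll_{\rho_X} c$ inside $U$ (using local compactness and regularity), then apply dV-density of $\DD$ to interpolate an $F\in\DD$. The only cosmetic difference is that you invoke Fact~\ref{wfact} to get $a \ll_{\rho_X} F \ll_{\rho_X} c$, whereas the plain inclusion $a \le F \le c$ from the definition of dV-density already suffices.
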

\begin{proof}
Let $x\in X$ and $U$ be a neighborhood of $x$. Since $X$ is regular and locally compact, there exist $F,G\in \CR(X)$ such that $x\in\int(F)\sbe F\sbe\int(G)\sbe G\sbe U$.  Then  $F\ll_{\rho_X} G$. Hence, there exists $H\in \DD$ such that $F\sbe H\sbe G$. It follows that  $\int(H)\in\BB_{\DD}$ and $x\in\int(H)\sbe U$. So, $\BB_{\DD}$ is a base for $X$.
\end{proof}

\begin{lm}\label{weightpiwlm2}
Let $X\in|\HLC|$, $\BB$ be a base for $X$ and $Cl(\BB)\df\{\cl(U)\st U\in\BB\}\sbe \CR(X)$.  Then, the sub-join-semilattice $\LL_J(\BB)$ of $\CR(X)$ generated by  $Cl(\BB)$  is a base for the LCA $\LAM^t(X)$.
\end{lm}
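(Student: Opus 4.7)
The plan is to verify the defining condition of \emph{dV-density} for $\LL_J(\BB)$ inside $\LAM^t(X)=\klam{\RC(X),\rho_X,\CR(X)}$. By Fact \ref{wfact}, it suffices to show that whenever $F,G\in\CR(X)$ with $F\ll_{\rho_X}G$, there exists $H\in\LL_J(\BB)$ with $F\ll_{\rho_X}H\ll_{\rho_X}G$ (note that the required $H\in\CR(X)$ will come for free once $H\subseteq G$). Recall from Example \ref{rct} that $F\ll_{\rho_X}G$ means $F\subseteq\int(G)$, and that in $\RC(X)$ the finite join coincides with set-theoretic union.

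Given such $F,G$, I would proceed as follows. Since $X$ is locally compact Hausdorff, hence regular, for every $x\in F\subseteq\int(G)$ there is an open neighborhood $W_x$ of $x$ with $\cl(W_x)\subseteq\int(G)$. Because $\BB$ is a base for $X$, pick $V_x\in\BB$ with $x\in V_x\subseteq W_x$; then $\cl(V_x)\subseteq\int(G)$. Since $F$ is compact (as $F\in\CR(X)$), extract a finite subcover $V_{x_1},\ldots,V_{x_n}$ of $F$, and set
\begin{equation*}
H\df \cl(V_{x_1})\cup\cdots\cup\cl(V_{x_n}).
\end{equation*}

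Next I would verify that $H$ is in the correct place. Each $\cl(V_{x_i})$ is the closure of an open set and is therefore regular closed, so $\cl(V_{x_i})\in Cl(\BB)$; consequently $H$ is a finite join (in $\RC(X)$) of elements of $Cl(\BB)$ and thus lies in $\LL_J(\BB)$. Moreover $H\subseteq\int(G)\subseteq G$, and $G$ is compact, so $H$ is compact, giving $H\in\CR(X)$. For the two non-tangential inclusions: from $F\subseteq\bigcup_i V_{x_i}$ and the obvious chain $V_{x_i}\subseteq\int(\cl(V_{x_i}))\subseteq\int(H)$ we get $F\subseteq\int(H)$, i.e.\ $F\ll_{\rho_X}H$; and from $\cl(V_{x_i})\subseteq\int(G)$ for every $i$ we get $H\subseteq\int(G)$, i.e.\ $H\ll_{\rho_X}G$.

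No step is really hard; the only thing to be careful about is making sure that the basic open sets $V_x$ are chosen so that their closures sit inside $\int(G)$, which is the point at which local compactness and regularity of $X$ enter (alternatively, one could first pick an open $W_x$ with compact closure inside $\int(G)$ and then shrink $W_x$ to a member of $\BB$). Once this is done, compactness of $F$ collapses the cover to a finite one, producing the desired $H\in\LL_J(\BB)\cap\CR(X)$ with $F\ll_{\rho_X}H\ll_{\rho_X}G$. This shows that $\LL_J(\BB)$ is dV-dense in $\LAM^t(X)$, and therefore a base in the sense of Definition \ref{clcawe}.
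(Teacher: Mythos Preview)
Your proof is correct and follows essentially the same route as the paper: cover the compact set $F\subseteq\int(G)$ by basic open sets whose closures lie in $\int(G)$, extract a finite subcover, and take the union $H$ of the closures. The only differences are cosmetic---the paper combines regularity and the base property into a single step and verifies merely $F\le H\le G$ (the original dV-density condition), whereas you split the regularity step in two and establish the equivalent $F\ll_{\rho_X} H\ll_{\rho_X} G$ via Fact~\ref{wfact}; also note that $H\in\CR(X)$ is automatic from $\LL_J(\BB)\subseteq\CR(X)$, so the detour through ``$H\subseteq G$ compact'' is unnecessary.
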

\begin{proof}
Let $F,G\in \CR(X)$ and $F\ll_{\rho_X} G$, i.e. $F\sbe \int(G)$. By regularity, for every $x\in F$ there exists $U_x\in \BB$
such that $x\in U_x\sbe \cl(U_x)\sbe \int(G)$. Since $F$ is compact, there exist $n\in\NNNN^+$ and $x_1,\ldots,x_n\in F$ such that $F\sbe\bigcup_{i=1}^n U_{x_i}\sbe\bigcup_{i=1}^n \cl(U_{x_i})\sbe\int(G)$. Thus
$H\df\bigcup_{i=1}^n \cl(U_{x_i})=\bigvee_{i=1}^n \cl(U_{x_i})\in\LL_J(\BB)$ and $F\sbe H\sbe G$. So, $\LL_J(\BB)$ is a base for the LCA $\klam{\RC(X),\rho_X,\CR(X)}$.
\end{proof}

\begin{theorem}\label{weightpiw}
Let  $X$ be a locally compact Hausdorff space and $w(X)\ge\aleph_0$.
Then $w(X)=w_a(\klam{\RC(X),\rho_X,\CR(X)})$ (i.e., $w(X)=w_a(\LAM^t(X))$).
\end{theorem}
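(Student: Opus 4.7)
The plan is a double inequality proof, where each direction follows by applying one of the two lemmas just established, together with a routine cardinal arithmetic argument.

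For the inequality $w(X) \le w_a(\LAM^t(X))$, I would take a base $\DD$ for the LCA $\LAM^t(X)$ with $|\DD| = w_a(\LAM^t(X))$. By Lemma \ref{weightpiwlm1}, the family $\BB_{\DD} = \{\int(F) : F \in \DD\}$ is an open base for $X$. Hence $w(X) \le |\BB_{\DD}| \le |\DD| = w_a(\LAM^t(X))$.

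For the reverse inequality $w_a(\LAM^t(X)) \le w(X)$, I would pick an open base $\BB$ for $X$ with $|\BB| = w(X)$. By Lemma \ref{weightpiwlm2}, the sub-join-semilattice $\LL_J(\BB)$ of $\CR(X)$ generated by $\{\cl(U) : U \in \BB\}$ is a base for $\LAM^t(X)$. Therefore $w_a(\LAM^t(X)) \le |\LL_J(\BB)|$, and the task reduces to showing $|\LL_J(\BB)| \le |\BB|$. Since every element of $\LL_J(\BB)$ is a finite join of elements of $\{\cl(U) : U \in \BB\}$, the set $\LL_J(\BB)$ is in bijection with a subset of the collection of finite non-empty subsets of $\BB$; as $w(X) \ge \aleph_0$, that collection has cardinality $|\BB| = w(X)$, giving $|\LL_J(\BB)| \le w(X)$.

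Combining the two inequalities yields $w(X) = w_a(\LAM^t(X))$. The proof is essentially bookkeeping on top of the two lemmas; the only non-formal point is the use of the hypothesis $w(X) \ge \aleph_0$, which is needed exactly to ensure that passing from $\BB$ to its finite joins does not increase cardinality. No step looks genuinely difficult, since the real topological content (translating between bases for $X$ and dV-dense subsets of $\LAM^t(X)$) has already been absorbed into Lemmas \ref{weightpiwlm1} and \ref{weightpiwlm2}.
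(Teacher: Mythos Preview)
Your overall strategy matches the paper's: one inequality from Lemma~\ref{weightpiwlm1}, the other from Lemma~\ref{weightpiwlm2}, with the hypothesis $w(X)\ge\aleph_0$ used to control the size of $\LL_J(\BB)$. The direction $w(X)\le w_a(\LAM^t(X))$ is exactly as in the paper.

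There is, however, a small but genuine gap in your argument for $w_a(\LAM^t(X))\le w(X)$. Lemma~\ref{weightpiwlm2} carries the hypothesis $Cl(\BB)=\{\cl(U):U\in\BB\}\subseteq\CR(X)$, i.e.\ every element of the chosen base must have \emph{compact} closure. An arbitrary open base of cardinality $w(X)$ need not satisfy this (think of $X=\mathbb{R}$ with the base of all open intervals, including unbounded ones). The paper handles this by first observing that $\BB_0\df\{\int(F):F\in\CR(X)\}$ is a base for $X$ and then invoking the Alexandroff--Urysohn theorem \cite[Theorem~1.1.15]{E} to extract $\BB\subseteq\BB_0$ with $|\BB|=w(X)$; this guarantees $\cl(U)\in\CR(X)$ for every $U\in\BB$, so Lemma~\ref{weightpiwlm2} applies. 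Your proof needs exactly this extra step (or an equivalent remark that in a locally compact Hausdorff space one may choose the minimal base to consist of relatively compact open sets). Once that is inserted, your cardinality bookkeeping for $|\LL_J(\BB)|$ is fine and the proof goes through; note only that the map from finite subsets of $\BB$ to $\LL_J(\BB)$ is a surjection rather than a bijection, which is all you need.
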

\begin{proof}
We know that the family $\BB_0\df \{\int(F)\st
F\in \CR(X)\}$ is a base for $X$. Hence, by the Alexandroff-Urysohn theorem for bases \cite[Theorem 1.1.15]{E},
there exists a base $\BB$ of $X$ such that
$\BB\sbe\BB_0$ and $\card{\BB}=w(X)$. Let $\LL_J(\BB)$ be the
sub-join-semilattice of $\CR(X)$  generated  by the set
$\{\cl(U)\st U\in\BB\}$. Then, by Lemma \ref{weightpiwlm2}, $\LL_J(\BB)$
is a base for $\klam{\RC(X),\rho_X,\CR(X)}$.  Clearly, $|\LL_J(\BB)|=|\BB|=w(X)$.
Hence, $w(X)\ge w_a(\klam{\RC(X),\rho_X,\CR(X)})$.

Conversely, let $\DD$ be a base for $\klam{\RC(X),\rho_X,\CR(X)}$ such that
$$\card{\DD}=w_a(\klam{\RC(X),\rho_X,\CR(X)}).$$ Then, by Lemma \ref{weightpiwlm1},
$\BB_{\DD}\df\{\int(F)\st F\in\DD\}$ is a base for $X$.
Since $|\BB_{\DD}|=|\DD|$, we obtain that
$w(X)\le w_a(\klam{\RC(X),\rho_X,\CR(X)})$.

Altogether, we have shown  that $w(X)=w_a(\klam{\RC(X),\rho_X,\CR(X)})$.
\end{proof}

\begin{lm}\label{complwelm}
Let $\klam{B,\rho,\BBBB}$ be an LCA and $(\p,\klam{B\ap,\rho\ap,\BBBB\ap})$ be its LCA-completion. Then:

\smallskip

\noindent(a) if $D$ is a base for $\klam{B,\rho,\BBBB}$, then $\p(D)$ is a base for $\klam{B\ap,\rho\ap,\BBBB\ap}$;

\smallskip

\noindent(b) if $D\ap$ is a base for  $\klam{B\ap,\rho\ap,\BBBB\ap}$ and $D\ap\sbe\p(\BBBB)$, then $\p\inv(D\ap)$ is a base for $\klam{B,\rho,\BBBB}$.
\end{lm}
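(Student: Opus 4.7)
The plan is to exploit two properties of the LCA-completion $(\p,\klam{B\ap,\rho\ap,\BBBB\ap})$ from Definition \ref{defcompl}: first, that $\p$ is an LCA-embedding, hence injective and a CA-morphism which also satisfies $\p(a)\llx\p(b)$ iff $a\ll b$, and $\p(a)\in\BBBB\ap$ iff $a\in\BBBB$; second, that $\p(\BBBB)$ is itself dV-dense in $\klam{B\ap,\rho\ap,\BBBB\ap}$. Together with Fact \ref{wfact}, which lets us replace $\le$ by $\ll$ in the dV-density condition, this reduces each direction to a routine interpolation argument.

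For part (a), I would take $a\ap,c\ap\in\BBBB\ap$ with $a\ap\llx c\ap$. Applying Fact \ref{wfact} to the dV-dense set $\p(\BBBB)$ twice, I would first obtain $b_2\in\BBBB$ with $a\ap\llx\p(b_2)\llx c\ap$, and then obtain $b_1\in\BBBB$ with $a\ap\llx\p(b_1)\llx\p(b_2)$. Since $\p$ reflects the non-tangential inclusion (as it is an LCA-embedding), $b_1\ll b_2$ in $\klam{B,\rho,\BBBB}$. Because $D$ is a base for $\klam{B,\rho,\BBBB}$, Fact \ref{wfact} supplies $d\in D$ with $b_1\ll d\ll b_2$. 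Applying $\p$, which preserves $\ll$ and preserves $\BBBB$, I obtain $\p(d)\in\p(D)\sbe\BBBB\ap$ with $a\ap\llx\p(d)\llx c\ap$; this shows $\p(D)$ is dV-dense in the completion.

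For part (b), I would take $a,c\in\BBBB$ with $a\ll c$. Since $\p$ is an LCA-embedding, $\p(a),\p(c)\in\BBBB\ap$ and $\p(a)\llx\p(c)$. Because $D\ap$ is a base for $\klam{B\ap,\rho\ap,\BBBB\ap}$, Fact \ref{wfact} gives $d\ap\in D\ap$ with $\p(a)\llx d\ap\llx\p(c)$. Using the hypothesis $D\ap\sbe\p(\BBBB)$ and the injectivity of $\p$, write $d\ap=\p(d)$ for a uniquely determined $d\in\BBBB$; then $d\in\p\inv(D\ap)$. Since $\p$ reflects $\ll$, from $\p(a)\llx\p(d)\llx\p(c)$ I conclude $a\ll d\ll c$, so $\p\inv(D\ap)$ is dV-dense in $\klam{B,\rho,\BBBB}$.

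There is no real obstacle here; the argument is a straightforward two-sided translation between $\klam{B,\rho,\BBBB}$ and its completion, and the only thing to watch carefully is to invoke Fact \ref{wfact} so that every intermediate element lies in the correct ideal ($\BBBB$ or $\BBBB\ap$) and that the injectivity of $\p$ is used when pulling back $d\ap\in\p(\BBBB)$ to an element of $\BBBB$.
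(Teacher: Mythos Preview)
Your proof is correct and follows essentially the same route as the paper. For (a) the paper compresses your two applications of Fact~\ref{wfact} into a single sentence producing $a\ll_{\rho'}\p(b_1)\ll_{\rho'}\p(b_2)\ll_{\rho'}c$ and then proceeds exactly as you do; for (b) the paper simply writes ``This is obvious,'' so your spelled-out argument is just a fuller version of what the authors intend.
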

\begin{proof} By definition,  $\p(\BBBB)$ is  dV-dense in $\klam{A\ap,\rho\ap,\BBBB\ap}$.

\noindent(a) Let $a,c\in\BBBB\ap$ and $a\ll_{\rho\ap} c$. Then, by Fact \ref{wfact}, there exist $b_1,b_2\in\BBBB$ such that $a\ll_{\rho\ap} \p(b_1)\ll_{\rho\ap} \p(b_2)\ll_{\rho\ap} c$; thus $b_1\ll_\rho b_2$. Hence, there exists some $b\in D$ such that $b_1\ll_\rho b\ll_\rho b_2$. Then, $a\ll_{\rho\ap} \p(b)\ll_{\rho\ap}  c$, and therefore,  $\p(D)$ is a base for $\klam{A\ap,\rho\ap,\BBBB\ap}$.

\smallskip

\noindent(b) This is obvious.
\end{proof}

\begin{theorem}\label{complwe}
 Let $\klam{B,\rho,\BBBB}$ be an LCA,  $(\p,\klam{B\ap,\rho\ap,\BBBB\ap})$ be its LCA-completion  and $w_a(\klam{B,\rho,\BBBB})\ge\aleph_0$. Then $w_a(\klam{B,\rho,\BBBB})=w_a(\klam{B\ap,\rho\ap,\BBBB\ap})$.
 \end{theorem}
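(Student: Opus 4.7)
\textbf{Proof proposal for Theorem \ref{complwe}.}

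My plan is to prove the two inequalities separately, using Lemma \ref{complwelm} together with a transfer construction that forces a base of $\klam{B',\rho',\BBBB'}$ into the image $\p(\BBBB)$. For the easy direction $w_a(\klam{B',\rho',\BBBB'})\le w_a(\klam{B,\rho,\BBBB})$, I would pick a base $D$ for $\klam{B,\rho,\BBBB}$ with $|D|=w_a(\klam{B,\rho,\BBBB})$, apply Lemma \ref{complwelm}(a) to conclude that $\p(D)$ is a base for $\klam{B',\rho',\BBBB'}$, and invoke Proposition \ref{pro:injective} (via the definition of LCA-embedding) to get $|\p(D)|=|D|$.

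For the reverse inequality I would start with a base $D'$ for $\klam{B',\rho',\BBBB'}$ of cardinality $\lambda\df w_a(\klam{B',\rho',\BBBB'})$ and manufacture a base contained in $\p(\BBBB)$. For every pair $(a,c)\in D'\times D'$ with $a\ll_{\rho'} c$, use the dV-density of $\p(\BBBB)$ in $\klam{B',\rho',\BBBB'}$ (Fact \ref{wfact}) to select $b_{a,c}\in\BBBB$ with $a\ll_{\rho'}\p(b_{a,c})\ll_{\rho'} c$, and set
\[
E'\df\{\p(b_{a,c}):a,c\in D',\ a\ll_{\rho'} c\}\subseteq\p(\BBBB),\qquad |E'|\le\lambda^2.
\]
The key verification is that $E'$ is a base for $\klam{B',\rho',\BBBB'}$: given $u,v\in\BBBB'$ with $u\ll_{\rho'} v$, I would apply the base property of $D'$ (in its Fact \ref{wfact} form) and an interpolation via axiom (LC1) twice, producing $d_1,d_2\in D'$ with $u\ll_{\rho'} d_1\ll_{\rho'} d_2\ll_{\rho'} v$; then $\p(b_{d_1,d_2})\in E'$ sits between $u$ and $v$. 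Once $E'$ is a base contained in $\p(\BBBB)$, Lemma \ref{complwelm}(b) yields that $\p\inv(E')$ is a base for $\klam{B,\rho,\BBBB}$, and injectivity of $\p$ gives $w_a(\klam{B,\rho,\BBBB})\le|\p\inv(E')|=|E'|\le\lambda^2$.

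Finally I would use the hypothesis $w_a(\klam{B,\rho,\BBBB})\ge\aleph_0$ to dispose of the cardinal arithmetic: if $\lambda<\aleph_0$, then $\lambda^2<\aleph_0$ gives $w_a(\klam{B,\rho,\BBBB})<\aleph_0$, a contradiction; hence $\lambda\ge\aleph_0$ and $\lambda^2=\lambda$, so $w_a(\klam{B,\rho,\BBBB})\le\lambda=w_a(\klam{B',\rho',\BBBB'})$. I expect the interpolation step producing the pair $(d_1,d_2)\in D'\times D'$ with $d_1\ll_{\rho'} d_2$ sandwiched between $u$ and $v$ to be the only non-routine part; the rest is a packaging of Lemma \ref{complwelm} with cardinal bookkeeping.
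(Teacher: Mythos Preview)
Your argument is correct and takes a genuinely different route from the paper. The paper passes to the dual space $X=\LAM^a(\klam{B',\rho',\BBBB'})$, identifies $\klam{B,\rho,\BBBB}$ with an LC-subalgebra of $\LAM^t(X)$, and then uses topological machinery: Lemma~\ref{weightpiwlm1} turns a base for $\LAM^t(X)$ into a base for $X$, the Alexandroff--Urysohn theorem extracts a sub-base of cardinality $w(X)$, Lemma~\ref{weightpiwlm2} pushes it back to a base for $\LAM^t(X)$ lying inside $\BBBB$, and Theorem~\ref{weightpiw} closes the loop. Your approach stays entirely inside the algebra: you replace Alexandroff--Urysohn by the selection $E'=\{\p(b_{a,c})\}$ indexed by $D'\times D'$, at the price of the bound $\lambda^2$ instead of $\lambda$, and then use the hypothesis $w_a\ge\aleph_0$ to reduce $\lambda^2$ to $\lambda$. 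What you gain is a self-contained proof that does not invoke the duality $\LAM^a$, the dual space, or any external topological theorem; what the paper's approach buys is a direct cardinality $\lambda$ without the squaring detour and an explicit link to the topological weight.

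One small simplification: the interpolation producing $d_1,d_2\in D'$ with $u\ll_{\rho'} d_1\ll_{\rho'} d_2\ll_{\rho'} v$ does not need (LC1) at all. Since $D'\subseteq\BBBB'$, two applications of Fact~\ref{wfact} suffice: first get $d_2\in D'$ with $u\ll_{\rho'} d_2\ll_{\rho'} v$, then apply Fact~\ref{wfact} to the pair $u\ll_{\rho'} d_2$ (both in $\BBBB'$) to get $d_1\in D'$ with $u\ll_{\rho'} d_1\ll_{\rho'} d_2$.
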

\begin{proof}
%
Let  $X\df \LAM^a(\klam{B\ap,\rho\ap,\BBBB\ap})$.
Then, by Theorem \ref{lcacompletion}, we may suppose \wlg that $\klam{B,\rho,\BBBB}$ is an LC-subalgebra of $\LAM^t(X)=\klam{\RC(X),\rho_X,\CR(X)}$ and $(id,\LAM^t(X))$ is an
LCA-completion of $\klam{B,\rho,\BBBB}$, where $id:\klam{B,\rho,\BBBB}\lra \LAM^t(X)$ is the inclusion map; also, $(id,\LAM^t(X))$ and $(\p,\klam{B\ap,\rho\ap,\BBBB\ap})$ are equivalent LCA-completions of $\klam{B,\rho,\BBBB}$ (recall also that, by Theorem \ref{lccont}, $\LAM^t(X)$ and $\klam{B\ap,\rho\ap,\BBBB\ap}$ are LCA-isomor\-phic). So, $\BBBB$ is  dV-dense in $\LAM^t(X)$. Thus  $\BBBB$ is a base for $\LAM^t(X)$. Let $\DD$ be a base for $\klam{B,\rho,\BBBB}$ and $|\DD|=w_a(\klam{B,\rho,\BBBB})$. Then, by Lemma \ref{complwelm}(a),  $\DD$ is a base for $\LAM^t(X)$. Therefore,
$w_a(\klam{B\ap,\rho\ap,\BBBB\ap})\le |\DD|=w_a(\klam{B,\rho,\BBBB})$.
Further, by Lemma
\ref{weightpiwlm1},  $\BB_{\DD}\df\{\int(F)\st F\in \DD\}$ is a base for $X$.
Applying the Alexandroff-Urysohn theorem for bases
\cite[Theorem 1.1.15]{E}, we find a base $\BB$ for $X$ such that $\BB\sbe\BB_{\DD}$ and $|\BB|=w(X)$.
Then, Lemma \ref{weightpiwlm2} implies that the
sub-join-semilattice $\LL_J(\BB)$ of $\CR(X)$, generated by the set $Cl(\BB)\df\{\cl(U)\st U\in \BB\}$, is a base for $\LAM^t(X)$. Since $\BB\sbe\BB_{\DD}$, we have  $Cl(\BB)\sbe \DD$.
On the other hand, $\DD\sbe \BBBB$ and $\BBBB$ is a sub-join-semilattice of $\CR(X)$; hence $\LL_J(\BB)\sbe \BBBB$. Then, by Lemma \ref{complwelm}(b),  $\LL_J(\BB)$ is a base for $\klam{B,\rho,\BBBB}$. Thus, using Theorem \ref{weightpiw}, we obtain  $$w_a(\klam{B,\rho,\BBBB})\le |\LL_J(\BB)|=|\BB|=w(X)=w_a(\klam{B\ap,\rho\ap,\BBBB\ap})\le w_a(\klam{B,\rho,\BBBB}).$$
 So,
$w_a(\klam{B,\rho,\BBBB})= w_a(\klam{B\ap,\rho\ap,\BBBB\ap})$.
\end{proof}

The next theorem is an analogue of the Alexandroff-Urysohn theorem for bases \cite[Theorem 1.1.15]{E}.

\begin{theorem}\label{auth}
Let $D$ be a base for an  LCA $\klam{B,\rho,\BBBB}$ with infinite weight. Then there exists a subset $D_1$ of $D$ such that $|D_1|=w_a(\klam{B,\rho,\BBBB})$ and the sub-join-semilattice $L$ of $\BBBB$, generated by $D_1$, is a base for $\klam{B,\rho,\BBBB}$ with cardinality $w_a(\klam{B,\rho,\BBBB})$. If $D$ is, in addition, a
sub-join-semilattice  of $\BBBB$, then $L\sbe D$.
\end{theorem}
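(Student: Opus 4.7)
The plan is to follow the skeleton of the classical Alexandroff--Urysohn argument, but at the level of the dV-dense subsets of $\BBBB$ rather than of open bases. Let $\tau \df w_a(\klam{B,\rho,\BBBB})$.

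First I would pick a base $D_0$ of $\klam{B,\rho,\BBBB}$ of minimum cardinality, so $|D_0|=\tau$. The idea is to $``$copy'' $D_0$ into $D$ by interpolation. For every ordered pair $(a,b)\in D_0\times D_0$ with $a\ll b$, Fact \ref{wfact} gives some $b_0\in\BBBB$ with $a\ll b_0\ll b$, and since $D$ is dV-dense (being a base), a further application of Fact \ref{wfact} produces an element $d_{a,b}\in D$ with $a\ll d_{a,b}\ll b$. Choose one such $d_{a,b}$ for each such pair and set
\[
D_1 \df \{d_{a,b}\st (a,b)\in D_0\times D_0,\ a\ll b\}\sbe D.
\]
Then $|D_1|\le|D_0|^2=\tau$.

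Next I would show that the sub-join-semilattice $L$ of $\BBBB$ generated by $D_1$ is a base. So let $a,c\in\BBBB$ with $a\ll c$. Since $D_0$ is a base, Fact \ref{wfact} applied twice yields $b_1,b_2\in D_0$ with $a\ll b_2\ll b_1\ll c$. By construction, $d_{b_2,b_1}\in D_1\sbe L$ satisfies $b_2\ll d_{b_2,b_1}\ll b_1$, hence $a\ll d_{b_2,b_1}\ll c$, and Fact \ref{wfact} shows $L$ is dV-dense in $\klam{B,\rho,\BBBB}$. So $L$ is indeed a base, and in particular $|L|\ge w_a(\klam{B,\rho,\BBBB})=\tau$. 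On the other hand, since $L$ consists of finite joins of elements of $D_1$ and $\tau$ is infinite, $|L|\le|D_1|^{<\omega}=\tau$; hence $|L|=\tau$, and consequently $|D_1|=\tau$ as well.

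For the final clause, if $D$ is itself a sub-join-semilattice of $\BBBB$, then $D_1\sbe D$ implies $L\sbe D$, because $L$ is the smallest sub-join-semilattice of $\BBBB$ containing $D_1$. I do not expect a serious obstacle in this argument; the only thing one has to be slightly careful about is the cardinal arithmetic, which works cleanly thanks to the assumption that $w_a(\klam{B,\rho,\BBBB})$ is infinite (so both $|D_0\times D_0|$ and the set of finite subsets of $D_1$ have cardinality $\tau$).
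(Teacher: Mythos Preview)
Your argument is correct, but it follows a genuinely different route from the paper's proof. The paper passes through the topological duality: it takes the LCA-completion, realizes it as $\LAM^t(X)$ for a locally compact Hausdorff space $X$, pushes $D$ forward to an open base $\BB_D$ of $X$ via Lemma~\ref{weightpiwlm1}, applies the classical Alexandroff--Urysohn theorem to $\BB_D$ to extract $\BB$ with $|\BB|=w(X)$, takes closures to get $D_1$, and then comes back via Lemmas~\ref{weightpiwlm2} and~\ref{complwelm}(b) together with Theorems~\ref{weightpiw} and~\ref{complwe}. Your proof, by contrast, stays entirely inside the LCA and runs the Alexandroff--Urysohn interpolation argument directly at the level of dV-dense subsets, using nothing beyond Fact~\ref{wfact} and elementary cardinal arithmetic. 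A pleasant by-product of your approach is that $D_1$ itself is already a base, so the join-semilattice $L$ plays no essential role for you; in the paper's proof the join-semilattice is genuinely needed because Lemma~\ref{weightpiwlm2} builds the base by covering compact sets with finite unions. Your route is more elementary and self-contained; the paper's route illustrates how the duality machinery transports the topological theorem to the algebraic side.

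One small point to tighten: when you write $|L|\le |D_1|^{<\omega}=\tau$, you are tacitly assuming $|D_1|$ is infinite. This is justified, since $L$ is a base and hence $|L|\ge\tau\ge\aleph_0$, forcing $D_1$ to be infinite; but it is worth saying explicitly.
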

\begin{proof}
Let $(\p,\klam{B\ap,\rho\ap,\BBBB\ap})$ be the LCA-completion of $\klam{B,\rho,\BBBB}$.   As in the proof of Theorem \ref{complwe}, we set $X\df\LAM^a(\klam{B\ap,\rho\ap,\BBBB\ap})$ and   suppose \wlg that $\klam{B,\rho,\BBBB}$ is an LC-subalgebra of $\LAM^t(X)$. Then, by Lemma \ref{complwelm}(a), $D$ is a base for $\LAM^t(X)$. Thus, by Lemma \ref{weightpiwlm1}, $\BB_D\df\{\int(F)\st F\in D\}$ is a base for $X$. Using \cite[Theorem 1.1.15]{E}, we obtain a base $\BB$ for $X$ such that $\BB\sbe\BB_D$ and $|\BB|=w(X)$. Let $D_1\df \{\cl(U)\st U\in\BB\}$. Then $D_1\sbe D\sbe\BBBB$ and, by Lemma \ref{weightpiwlm2},  the sub-join-semilattice $L$ of $\CR(X)$, generated by $D_1$, is a base for $\LAM^t(X)$. Since $L\sbe \BBBB$,  Lemma \ref{complwelm}(b) implies that $L$ is a base for $\klam{B,\rho,\BBBB}$. Clearly,  $L$ coincides with the sub-join-semilattice  of $\BBBB$, generated by $D_1$. Using Theorems \ref{weightpiw} and
\ref{complwe}, we obtain  $$|L|=|D_1|=|\BB|=w(X)=w_a(\klam{B\ap,\rho\ap,\BBBB\ap})=w_a(\klam{B,\rho,\BBBB}).$$
\end{proof}

\begin{pro}\label{infwe}
If $\klam{B,\rho,\BBBB}$  is an LCA and $|B|\ge\aleph_0$ then $w_a(\klam{B,\rho,\BBBB})\ge\aleph_0$.
\end{pro}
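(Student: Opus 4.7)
The plan is to prove the contrapositive: if $w_a(\klam{B,\rho,\BBBB})<\aleph_0$, then $|B|<\aleph_0$. Accordingly, I would fix a finite base $D$ of $\klam{B,\rho,\BBBB}$ (so $D\sbe\BBBB$ and $D$ is dV-dense) and aim to show that $B$ is a finite Boolean algebra.

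My first step would be to verify that $D\stm\{0\}$ is order-dense in $B\stm\{0\}$, i.e.\ that every $a\ne 0$ in $B$ dominates some non-zero $d\in D$. Given $a\ne 0$, axiom \ref{bc3} supplies some $b\in\BBBB$ with $b\ne 0$ and $b\ll_\rho a$, and then \ref{bc1} interpolates a $c\in\BBBB$ with $b\ll_\rho c\ll_\rho a$. Since $b,c\in\BBBB$ and $b\ll_\rho c$, the dV-density of $D$ produces some $d\in D$ with $b\le d\le c\le a$, and $d$ is non-zero because $d\ge b>0$. This is precisely the step where the interpolation axiom \ref{bc1} is essential, because dV-density is only stated for pairs sitting inside $\BBBB$.

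My second step would exploit the finiteness of $D$ to show that $B$ is atomic and has at most $|D|$ atoms. A $\le$-minimal element $d_0$ of $D\stm\{0\}$ must be an atom of $B$, for otherwise some $0<b<d_0$ would, by the previous step, sit above some non-zero $d\in D$ with $d\le b<d_0$, contradicting minimality. Iterating the first step below an arbitrary element of $D\stm\{0\}$, and using finiteness of $D$ to terminate the strictly descending chain produced, I obtain an atom of $B$ lying in $D$ below each non-zero element of $D$; combining with step one, below every non-zero $a\in B$ lies some atom of $B$ (and it belongs to $D$). Hence $B$ is atomic, and its atoms are exactly the $\le$-minimal elements of $D\stm\{0\}$, so there are at most $|D|<\aleph_0$ of them.

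Finally I would conclude by the standard fact that an atomic Boolean algebra with $k$ atoms has at most $2^k$ elements: the map sending $b\in B$ to $\{a\in\mathrm{Atoms}(B)\st a\le b\}$ is injective, since if $b\ne c$ then $b\symdiff c\ne 0$ contains some atom by atomicity, and that atom lies below exactly one of $b$ and $c$. Thus $|B|\le 2^{|D|}<\aleph_0$, contradicting $|B|\ge\aleph_0$. I do not anticipate any real obstacle; the only delicate point is the combined use of \ref{bc3} and \ref{bc1} in the first step, which is needed so that the dV-density property can be applied to a pair of elements both lying in $\BBBB$.
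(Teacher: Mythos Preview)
Your argument is correct and is genuinely different from the paper's. The paper proceeds via the duality: it passes to the LCA-completion $(\p,\klam{B',\rho',\BBBB'})$, sets $X\df\LAM^a(\klam{B',\rho',\BBBB'})$, embeds $B$ into $\RC(X)$, observes that $|\RC(X)|\ge\aleph_0$ forces $w(X)\ge\aleph_0$ (since a space of finite weight is finite discrete), and then reads off $w_a(\klam{B,\rho,\BBBB})\ge w_a(\LAM^t(X))=w(X)\ge\aleph_0$ from Lemma~\ref{complwelm}(a) and Theorem~\ref{weightpiw}. By contrast, you stay entirely on the algebraic side: you exploit \ref{bc3} and \ref{bc1} to show that any dV-dense set is order-dense in $B$ (a fact the paper also records, just after Definition~\ref{defcompl}), and then use finiteness of $D$ directly to force $B$ to be a finite atomic Boolean algebra. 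Your route is more elementary and self-contained---it avoids the LCA-completion machinery, the Alexandroff--Urysohn theorem, and the duality altogether---while the paper's route has the virtue of illustrating once more how topological facts (here, that finite-weight spaces are finite) translate via $\LAM^t$. One minor remark: in your first step you could equally well apply \ref{bc3} twice (first to $a$, then to the resulting $b$) instead of \ref{bc3} followed by \ref{bc1}; either combination produces a $\ll_\rho$-related pair inside $\BBBB$ to which dV-density applies.
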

\begin{proof}
Let $(\p,\klam{B\ap,\rho\ap,\BBBB\ap})$ be the LCA-completion of $\klam{B,\rho,\BBBB}$.   As in the proof of Theorem \ref{complwe}, we set $X\df\LAM^a(\klam{B\ap,\rho\ap,\BBBB\ap})$ and  suppose \wlg that $\klam{B,\rho,\BBBB}$ is an LC-subalgebra of $\LAM^t(X)$. Then $B\sbe \RC(X)$, and thus $|\RC(X)|\ge\aleph_0$. Assume  that $w(X)$ is finite. Then $X$ is a discrete space and $w(X)=|X|$. Thus $RC(X)$ is finite, a contradiction. Therefore, $w(X)\ge\aleph_0$.
From Theorems \ref{weightpiw} and
\ref{complwe}, we obtain  $$w_a(\klam{B,\rho,\BBBB})=w_a(\klam{B\ap,\rho\ap,\BBBB\ap})=w_a(\LAM^t(X))=w(X)\ge\aleph_0.$$
\end{proof}

\begin{theorem}\label{metrclca}
Let $X\in\card\HLC$. Then $X$ is metrizable iff there exists a
set\/ $\GA$ and a family $\{\klam{B_\g,\rho_\g,\BBBB_\g}\st\g\in\GA\}$
of complete LCAs such that
$$\LAM^t(X)=\prod\{\klam{B_\g,\rho_\g,\BBBB_\g}\st\g\in\GA\}$$ and, for
each $\g\in\GA$, $w_a(\klam{B_\g,\rho_\g,\BBBB_\g})\le\aleph_0$.
\end{theorem}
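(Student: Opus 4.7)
The plan is to reduce the theorem to the topological fact that a locally compact Hausdorff space $X$ is metrizable if and only if it is a topological sum of second countable (equivalently, separable metrizable) locally compact Hausdorff spaces, and then translate the topological sum on the space side into the product on the algebraic side via the duality $\LAM^t$.

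\textbf{For the forward direction ($\Rightarrow$),} suppose $X$ is locally compact Hausdorff and metrizable. Then $X$ is paracompact, and by a classical result (see e.g.\ Engelking \cite{E}, the structure theorem for paracompact locally compact Hausdorff spaces) $X$ is homeomorphic to a topological sum $\bigsqcup_{\g\in\GA}X_\g$, where each $X_\g$ is a clopen, $\s$-compact, locally compact Hausdorff subspace. Each $X_\g$ is then Lindel\"of and metrizable, hence second countable, so $w(X_\g)\le\aleph_0$. Since topological sums are coproducts in $\HLC$ and $\LAM^t$ is a contravariant equivalence (Theorem \ref{lccont}), it turns coproducts into products, and a direct comparison with Definition \ref{prodclca} and Theorem \ref{prodclcacat} shows that $\LAM^t(X)=\prod_{\g\in\GA}\LAM^t(X_\g)$ in $\DHLC$. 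By Theorem \ref{weightpiw} (and the trivial case of finite weight handled separately), $w_a(\LAM^t(X_\g))=w(X_\g)\le\aleph_0$ for every $\g\in\GA$, giving the required representation with $\klam{B_\g,\rho_\g,\BBBB_\g}\df \LAM^t(X_\g)$.

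\textbf{For the converse direction ($\Leftarrow$),} suppose $\LAM^t(X)=\prod_{\g\in\GA}\klam{B_\g,\rho_\g,\BBBB_\g}$ with each factor of countable algebraic weight. Set $X_\g\df \LAM^a(\klam{B_\g,\rho_\g,\BBBB_\g})$. By the duality, since product in $\DHLC$ corresponds to coproduct in $\HLC$, $X$ is homeomorphic to the topological sum $\bigsqcup_{\g\in\GA}X_\g$. Applying Theorem \ref{weightpiw} to each $X_\g$ yields $w(X_\g)=w_a(\klam{B_\g,\rho_\g,\BBBB_\g})\le\aleph_0$, so each $X_\g$ is a second countable locally compact Hausdorff space and hence metrizable by the Urysohn metrization theorem. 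A topological sum of metrizable spaces is metrizable, so $X$ is metrizable.

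\textbf{The main obstacle} is the precise verification that the product of LCAs as defined in Definition \ref{prodclca} corresponds, under $\LAM^t$, to the topological sum: one must check that the coproduct in $\HLC$ is indeed the topological sum (which is routine since $\HLC$ is a full subcategory of $\Top$ and the topological sum of locally compact Hausdorff spaces is again locally compact Hausdorff) and that Theorem \ref{prodclcacat} together with the equivalence in Theorem \ref{lccont} forces $\LAM^t(\bigsqcup X_\g)$ to be (canonically isomorphic to) $\prod \LAM^t(X_\g)$. The rest of the argument is a direct application of previously established results (Theorems \ref{weightpiw}, \ref{lccont}, \ref{prodclcacat}) combined with the standard topological structure theorem for paracompact locally compact Hausdorff spaces.
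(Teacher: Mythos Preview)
Your proposal is correct and follows essentially the same approach as the paper: reduce to the topological characterization that a locally compact Hausdorff space is metrizable iff it is a topological sum of locally compact Hausdorff spaces of countable weight, then use that the duality $\LAM^t$ carries $\HLC$-coproducts (topological sums) to $\DHLC$-products (Theorem~\ref{prodclcacat}) and apply Theorem~\ref{weightpiw}. The only difference is expository: the paper simply cites the metrizability characterization (e.g.\ \cite[Theorem 5.1.27]{E}), whereas you unfold it via paracompactness and Urysohn's metrization theorem, and you are slightly more careful in flagging the finite-weight case not covered by Theorem~\ref{weightpiw}.
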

\begin{proof} It is well known that
  a locally
compact Hausdorff space is metrizable \tiff it is a topological sum
of locally compact Hausdorff spaces with countable weight (see, e.g., \cite[p. 315]{Alex} or  \cite[Theorem 5.1.27]{E}). Since
 $\LAM^t$ is a duality functor, it
converts the $\HLC$-sums in $\DLC$-products. Hence, our assertion
follows from the  theorem cited above and Theorems
\ref{prodclcacat} and \ref{weightpiw}.
\end{proof}

\begin{cor}\label{cordim}
If $\klam{B, \rho, \BBBB}$ is a complete LCA and $w_a(\klam{B, \rho,
\BBBB}) \leq \aleph_0$, then $\LAM^a(\klam{B, \rho, \BBBB})$ is a metrizable, separable, locally compact space.
\end{cor}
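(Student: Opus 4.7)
The plan is to just assemble this from the results already proved in the section, without needing any new algebraic machinery. Write $X \df \LAM^a(\klam{B, \rho, \BBBB})$. Since the contravariant functor $\LAM^a: \DHLC \lra \HLC$ lands in $\HLC$, the space $X$ is automatically locally compact and Hausdorff, which takes care of one of the three conclusions. Moreover, by Theorem \ref{lccont}, $\LAM^t(X)$ is LCA-isomorphic to $\klam{B,\rho,\BBBB}$ (this is where completeness of the LCA is used), so we may transfer weights freely between the two.

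Next I would handle the weight. If $|B|$ is finite, then $X$ is a finite discrete space, and metrizability, separability and local compactness are all trivial; so assume $|B|\ge\aleph_0$. By Proposition \ref{infwe}, $w_a(\klam{B,\rho,\BBBB})\ge\aleph_0$, hence the hypothesis gives $w_a(\klam{B,\rho,\BBBB})=\aleph_0$. Using the LCA-isomorphism $\klam{B,\rho,\BBBB}\cong \LAM^t(X)$, we have $w_a(\LAM^t(X))=\aleph_0$. If $w(X)$ were finite, $X$ would be discrete, forcing $\RC(X)$ finite and hence $w_a(\LAM^t(X))$ finite, a contradiction; so $w(X)\ge\aleph_0$, and Theorem \ref{weightpiw} yields $w(X)=w_a(\LAM^t(X))=\aleph_0$.

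Finally, metrizability and separability come for free from standard topology applied to this second-countable locally compact Hausdorff space: a second-countable regular space is metrizable by Urysohn's metrization theorem, and a second-countable space is automatically separable. Alternatively (more in the spirit of this paper), metrizability can be read off directly from Theorem \ref{metrclca} by taking the trivial index set $\GA=\{*\}$ and the one-factor product $\klam{B,\rho,\BBBB}=\prod\{\klam{B,\rho,\BBBB}\}$, since the only hypothesis of that theorem is that each factor have countable algebraic weight.

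There is no real obstacle here; the whole statement is a direct packaging of Theorem \ref{weightpiw} (to convert the algebraic hypothesis into a topological one) together with classical facts about second-countable locally compact Hausdorff spaces. The only small care needed is to dispose of the degenerate case $w(X)<\aleph_0$ separately, since Theorem \ref{weightpiw} is stated under the assumption $w(X)\ge\aleph_0$.
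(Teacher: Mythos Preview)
Your argument is correct and matches the paper's intended approach: the corollary is stated immediately after Theorem~\ref{metrclca} with no proof, so it is meant to follow at once from Theorem~\ref{weightpiw} (to pass from $w_a\le\aleph_0$ to $w(X)\le\aleph_0$) together with Theorem~\ref{metrclca} applied to a one-element index set, after which separability is automatic from second countability. Your extra care in disposing of the degenerate finite case is a welcome addition, since Theorem~\ref{weightpiw} is indeed stated only under the hypothesis $w(X)\ge\aleph_0$.
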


\begin{nota}\label{as}
\rm  Let $\klam{A,\rho,\BBBB}$ be an LCA. We set
$$\klam{A,\rho,\BBBB}_S\df \{a\in A\st a\ll_\rho a\}.$$ We will  write
simply $``A_S$" instead of $``\klam{A,\rho,\BBBB}_S$" when this does
not lead to an ambiguity.
\end{nota}

\begin{theorem}\label{nuldim}
Let  $\klam{B,\rho,\BBBB}$ be an LCA and $(\p,\klam{B\ap,\rho\ap,\BBBB\ap})$ be its LCA-comple\-tion. Then the space
$\LAM^a(\klam{B\ap,\rho\ap,\BBBB\ap})$ is zero-dimensional \tiff the set
$B_S\cap\BBBB$ is a base for $\klam{B,\rho,\BBBB}$.
\end{theorem}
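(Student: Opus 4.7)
The plan is to transfer the question to the standard LCA of the dual space via Theorem \ref{lcacompletion} and then exploit the strength of the dV-density axiom. Using Theorem \ref{lcacompletion} and the equivalence of LCA-completions, set $X\df\LAM^a(\klam{B\ap,\rho\ap,\BBBB\ap})$ and assume \wlg that $\klam{B\ap,\rho\ap,\BBBB\ap}=\LAM^t(X)=\klam{\RC(X),\rho_X,\CR(X)}$, with $\p:\klam{B,\rho,\BBBB}\lra\LAM^t(X)$ an LCA-embedding such that $\p(\BBBB)$ is dV-dense in $\LAM^t(X)$. Since $\p$ preserves and reflects both $\ll$ and membership in the bounded ideal, $\p(B_S\cap\BBBB)$ is exactly the set of those compact clopen subsets of $X$ that lie in $\p(B)$ (note that for $F\in\RC(X)$, $F\ll_{\rho_X}F$  \tiff  $F\sbe\int(F)$  \tiff  $F$ is clopen).

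For the ($\Leftarrow$) direction, suppose $B_S\cap\BBBB$ is a base for $\klam{B,\rho,\BBBB}$. By Lemma \ref{complwelm}(a), $\p(B_S\cap\BBBB)$ is a base for $\LAM^t(X)$. Exactly as in Lemma \ref{weightpiwlm1}, the family $\{\int(F)\st F\in\p(B_S\cap\BBBB)\}$ is an open base of $X$; but each $F$ is clopen, so $\int(F)=F$, and $X$ has a base of clopen sets. Hence $X$ is zero-dimensional.

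For the ($\Rightarrow$) direction, suppose $X$ is zero-dimensional and let $a,c\in\BBBB$ with $a\ll_\rho c$. Then $\p(a)\sbe\int(\p(c))$ in $X$ and $\p(a)$ is compact. By zero-dimensionality combined with local compactness, cover $\p(a)$ by finitely many compact clopen subsets of $X$ contained in $\int(\p(c))$ and take their union to obtain a compact clopen set $K$ with $\p(a)\sbe K\sbe\int(\p(c))$. The crucial observation is that $K$ is clopen, so $K\ll_{\rho_X}K$. Applying dV-density of $\p(\BBBB)$ in $\LAM^t(X)$ to the pair $K\ll_{\rho_X}K$ yields $d\in\p(\BBBB)$ with $K\le d\le K$, forcing $d=K$; hence $K=\p(h)$ for some $h\in\BBBB$. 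Since $\p(h)=K$ is clopen and $\p$ reflects $\ll$, $h\in B_S$, so $h\in B_S\cap\BBBB$. Finally, $\p(a)\le\p(h)=K\le\p(c)$, and by injectivity of $\p$, $a\le h\le c$. Thus $B_S\cap\BBBB$ is dV-dense in $\klam{B,\rho,\BBBB}$.

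The only subtle point, which is really what makes the argument go through, is that dV-density applied to a self-pair $K\ll K$ collapses to the assertion that $K$ itself lies in $\p(\BBBB)$. This automatically promotes every compact clopen subset of $X$ to an element of $\p(B_S\cap\BBBB)$, which supplies exactly the approximating elements of $B$ needed to establish dV-density inside $\klam{B,\rho,\BBBB}$; everything else is a routine combination of zero-dimensionality, local compactness and the injectivity properties of an LCA-embedding.
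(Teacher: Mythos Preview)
Your proof is correct and follows essentially the same approach as the paper: both reduce to the standard LCA $\LAM^t(X)$ via Theorem~\ref{lcacompletion}, and the decisive step in the $(\Rightarrow)$ direction is exactly the observation you isolate---that dV-density of $\p(\BBBB)$ applied to a self-pair $K\ll_{\rho_X}K$ forces every compact clopen $K$ into $\p(\BBBB)$ (and hence into $\p(B_S\cap\BBBB)$). The only cosmetic difference is that the paper, having identified $\klam{B,\rho,\BBBB}$ with a subalgebra of $\LAM^t(X)$, shows the full clopen compact base lies in $B_S\cap\BBBB$ and then quotes Lemmas~\ref{weightpiwlm2} and~\ref{complwelm}(b), whereas you verify the dV-density condition for $\klam{B,\rho,\BBBB}$ directly by constructing one $K$ per pair $a\ll_\rho c$; the underlying idea is identical.
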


\begin{proof}  Set $X\df\LAM^a(\klam{B\ap,\rho\ap,\BBBB\ap})$. As in the proof of Theorem \ref{complwe}, we may suppose \wlg that $\klam{B,\rho,\BBBB}$ is an LC-subalgebra of $\LAM^t(X)$, and
that $\BBBB$ is  dV-dense in $\LAM^t(X)$.
Then,  it follows from Lemma \ref{weightpiwlm1} that the set $\BB_{\BBBB}\df\{\int(F)\st F\in\BBBB\}$ is a base for $X$.

\noindent($\Rightarrow$)  Let $X$ be zero-dimensional. Then there exists a base $\BB$ for $X$ consisting of clopen compact sets.  Clearly, for every $U\in\BB$, we have  $U\ll_{\rho_X} U$. Since $\BBBB$ is  dV-dense in $\LAM^t(X)$, we obtain  $\BB\sbe B_S\cap\BBBB$. Therefore, $B_S\cap\BBBB$ is a base for $X$. Since $B_S\cap\BBBB$ is closed under joins, Lemma \ref{weightpiwlm2} implies that $B_S\cap\BBBB$ is a base for $\LAM^t(X)$. Then, using Lemma \ref{complwelm}(b), we obtain that $B_S\cap\BBBB$ is a base for $\klam{B,\rho,\BBBB}$.

\noindent($\Leftarrow$) Let $x\in X$ and $U$ be a neighborhood of
$x$. Since $\BB_{\BBBB}$ is a base for $X$, there exist $a,b\in\BBBB$ such that
$x\in\int(a)\sbe a \sbe\int(b)\sbe b\sbe U$; hence, $a\ll_\rho
b$. Thus, there exists some $c\in B_S\cap\BBBB$ such that $a\le c\le
b$. Since $c$ is clopen in $X$ and $x\in c\sbe U$, it follows that $X$ has a base consisting of clopen sets, i.e. $X$ is zero-dimensional.
\end{proof}

In the sequel, we will denote by $K$  the Cantor set.

 Note that
$\RC(K)$ is isomorphic to the  completion $A$ of a free
Boolean algebra $A_0$ with $\aleph_0$ generators, Equivalently,
$\RC(K)$ is the unique (up to isomorphism)  atomless complete
Boolean algebra $A$  containing a countable dense subalgebra $A_0$
(see, e.g., \cite[Example 7.24]{kop89}).   Defining in $A$ a relation $\rho$ by
$a(-\rho)b$  \tiff there exists  some $c\in A_0$ such
that $a\le c\le b^*$, we obtain (as we will see below) that $\klam{A,\rho}$ is a complete NCA which is
NCA-isomorphic to the complete NCA $\klam{\RC(K),\rho_K}$. We will now present a generalization
of this construction.

We  denote by $\Bool$  the category of all Boolean algebras and Boolean homomorphisms, by $\Stone$
 the category of all compact zero-dimensional Hausdorff spaces and continuous maps, and by
$S^a:\Bool\lra\Stone$  the Stone duality functor (see, e.g., \cite{kop89}).

\begin{theorem}\label{nuldimlm}
Let $A_0$ be a dense  Boolean subalgebra of a Boolean algebra $A$.
 For all $a,b\in A$, set  $a\ll_\rho b$ if there exists some
 $c\in A_0$ such that $a\le c\le b$.
Then the following holds:

\smallskip

\noindent(a)   $\klam{A,\rho}$ is an NCA,
$\klam{A,\rho}_S=A_0$, $A_0$ is the smallest base for $\klam{A,\rho}$ and
$$w(\klam{A,\rho})=\card{A_0}.$$

\smallskip

\noindent(b) If $A$ is complete, then $\LAM^a(\klam{A,\rho})$ is homeomorphic to $S^a(A_0)$, and
  $(i_0,\klam{A,\rho})$ is an NCA-completion of the NCA
$\klam{A_0,\rho_s^{A_0}}$, where $i_0:A_0\lra A$ is the inclusion map.
\end{theorem}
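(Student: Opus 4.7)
For part (a), my plan is to verify the seven de Vries axioms for $\ll_\rho$ directly from the definition. Axioms $(\ll 1)$--$(\ll 4)$ and $(\ll 7)$ are immediate once one recalls that $A_0$ is closed under $\vee$ and complement; interpolation $(\ll 5)$ is built into the definition, since if $a\le c\le b$ with $c\in A_0$ then the same $c$ witnesses both $a\ll_\rho c$ (via $c\le c\le c$) and $c\ll_\rho b$; and extensionality $(\ll 6)$ uses density of $A_0$ in $A$, as any nonzero $a$ dominates some nonzero $b\in A_0$, which in turn satisfies $b\ll_\rho b\le a$. The identity $\klam{A,\rho}_S=A_0$ is then immediate: $a\ll_\rho a$ forces $a\le c\le a$ for some $c\in A_0$, hence $a=c$, and conversely each $a \in A_0$ witnesses its own relation. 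The dV-density of $A_0$ is literally the definition of $\ll_\rho$, and minimality follows because any base $D$ must contain each $a\in A_0$ (as $a\ll_\rho a$ requires some $d\in D$ with $a\le d\le a$, forcing $d=a$); consequently $w_a(\klam{A,\rho})=\card{A_0}$.

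Now assume $A$ is complete. First I would show that $(i_0,\klam{A,\rho})$ is an NCA-completion of $\klam{A_0,\rho_s^{A_0}}$. Boolean-theoretically, $(i_0,A)$ is a Boolean completion of $A_0$ since $A_0$ is dense in the complete algebra $A$. To see that the contact relations transfer correctly, for $a,b\in A_0$ I observe that if $a\wedge b=0$ then $c\df b^*\in A_0$ witnesses $a\ll_\rho b^*$, whereas if $a\wedge b\neq 0$ no such witness can exist; thus $a\rho b\Iff a\wedge b\neq 0\Iff a\rho_s^{A_0}b$. Combined with the dV-density established in part (a), this yields the NCA-completion claim.

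Finally, set $X\df\LAM^a(\klam{A,\rho})$. By Theorem \ref{lccont}, $\klam{A,\rho}$ (viewed as an LCA with improper bounded ideal $A$) is LCA-isomorphic to $\LAM^t(X)=\klam{\RC(X),\rho_X,\CR(X)}$, so $\CR(X)=\RC(X)$ and $X$ is compact Hausdorff. By Theorem \ref{nuldim} together with (a), $X$ is zero-dimensional. Since an LCA-isomorphism preserves $\ll$, it sends $\klam{A,\rho}_S=A_0$ bijectively onto $\RC(X)_S$; but $F\ll_{\rho_X}F$ amounts to $F\subseteq\int(F)$, which for $F\in\RC(X)$ means $F$ is clopen, so $\RC(X)_S=\CO(X)$. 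Hence the restriction is a Boolean isomorphism $A_0\cong\CO(X)$, and classical Stone duality yields $X\cong S^a(\CO(X))\cong S^a(A_0)$. The main conceptual step, and the one place where care is needed, is this final identification of the $S$-part of an NCA with the clopen sets of its dual space, which is what lets us pass from the de Vries-style duality back to Stone duality.
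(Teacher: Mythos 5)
Your proof is correct. Part (a) and the NCA-completion claim in (b) follow essentially the same lines as the paper's own argument: direct verification of ($\ll$1)--($\ll$7) using that $A_0$ is a subalgebra, the observation that $c\ll_\rho c$ holds exactly for $c\in A_0$ (which gives $\klam{A,\rho}_S=A_0$ and forces any base to contain $A_0$), and the computation $\rho\cap(A_0\times A_0)=\rho_s^{A_0}$. Where you genuinely diverge is the homeomorphism $\LAM^a(\klam{A,\rho})\cong S^a(A_0)$. The paper starts from $X\df S^a(A_0)$, uses the Stone map together with uniqueness of Boolean completions to produce a Boolean isomorphism $\p:A\lra\RC(X)$ extending $A_0\cong\CO(X)$, and then checks by hand (using compactness of regular closed sets and the fact that $\CO(X)$ is a base closed under finite unions) that $\p$ is an NCA-isomorphism onto $\klam{\RC(X),\rho_X}$, after which it applies $\LAM^a$. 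You instead start from $X\df\LAM^a(\klam{A,\rho})$, invoke Theorem \ref{nuldim} together with part (a) to get zero-dimensionality, identify $\CO(X)=\RC(X)_S$ with the image of $A_0=\klam{A,\rho}_S$ under the canonical LCA-isomorphism (legitimate, since CA-isomorphisms preserve $\ll$ in both directions), and finish with classical Stone duality applied to $\CO(X)$. Your route is shorter because it outsources the work to Theorem \ref{nuldim}, which the paper proves earlier in the same section; the paper's route is more self-contained and has the side benefit of exhibiting the NCA-isomorphism $\klam{A,\rho}\cong\LAM^t(S^a(A_0))$ explicitly. Both arguments are sound.
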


\begin{proof} (a) It is easy to check that the
relation $\rho$ satisfies conditions ($\ll 1$)-($\ll 7$). To
establish ($\ll 5$) and ($\ll 6$), use the fact that for every
$c\in A_0$ we have, by the definition of the relation $\llx$, that
$c\llx c$. Hence, $\klam{A,\rho}$ is an NCA. By  definition of the relation $\ll_\rho$, we obtain  for $c\in A$, $c\ll_\rho c$ \tiff $c\in A_0$;
thus, $\klam{A,\rho}_S=A_0$. Obviously, $A_0$ is the smallest base for $\klam{A,\rho}$; hence, $w(\klam{A,\rho})=\card{A_0}$.

\smallskip

\noindent(b) Let $A$ be complete and set $X\df S^a(A_0)$. Then, the Stone map $s:A_0\lra \CO(X)$ is a Boolean isomorphism. Let $i:\CO(X)\lra \RC(X)$ be the inclusion map.
Then $(i\circ s, \RC(X))$ is a completion of $A_0$. We know that $(i_0,A)$ is a completion of $A_0$. Thus, there exists a Boolean isomorphism $\p:A\lra \RC(X)$ such that $\p\circ i_0=i\circ s$. We will show that $\p:\klam{A,\rho}\lra \klam{\RC(X),\rho_X}$
is an NCA-isomorphism. Let $a,b\in A$ and $a\ll_\rho b$. Then, there exists some $c\in A_0$ such that $a\le c\le b$. Thus, $\p(a)\le\p(c)\le\p(b)$. We have  $\p(A_0)=CO(X)$; hence, $\p(c)\in \CO(X)$. Therefore, $\p(a)\sbe\int(\p(b))$, i.e. $\p(a)\ll_{\rho_X}\p(b)$. Conversely, let $F,G\in \RC(X)$ and $F\ll_{\rho_X} G$, i.e. $F\sbe \int(G)$. Since $\CO(X)$ is a base of $X$, $F$ is compact and $\CO(X)$ is closed under finite unions, we obtain that there exists some $U\in \CO(X)$ such that $F\sbe U\sbe\int(G)\sbe G$. Then, $\p\inv(U)\in A_0$ and $\p\inv(F)\le\p\inv(U)\le\p\inv(G)$. Thus, by the definition of $\rho$, we obtain $\p\inv(F)\ll_\rho\p\inv(G)$.
Therefore, $\p:\klam{A,\rho}\lra \klam{\RC(X),\rho_X}$ is an NCA-isomorphism. Since $\klam{\RC(X),\rho_X}=\LAM^t(X)$ and $\LAM^a(\p):\LAM^a(\LAM^t(X))\lra\LAM^a(\klam{A,\rho})$ is a homeomorphism, we obtain that $\LAM^a(\klam{A,\rho})$ is homeomorphic to $S^a(A_0)$, using Theorem \ref{lccont}.

As we have seen in (a), $A_0$ is a base for $\klam{A,\rho}$, and thus, $A_0$ is  dV-dense in $\klam{A,\rho}$. Hence, for proving that $(i_0,\klam{A,\rho})$
is an NCA-completion of
$\klam{A_0,\rho_s^{A_0}}$, we need only  show that $\rho\cap (A_0\times A_0)=\rho_s^{A_0}$. So, let $a,b\in A_0$. Then, $$a(-\rho)b\Iff(\ex c\in A_0)(a\le c\le b^*).$$ Clearly, $a(-\rho)b$ implies that $a\we b=0$, i.e., $a(-\rho_s^{A_0})b$. Conversely, if  $a(-\rho_s^{A_0})b$, then  $a\we b=0$; hence, $a\le b^*$. Since $a\le a\le b^*$ and $a\in A_0$, we obtain that $a(-\rho)b$. Therefore, for every $a,b\in A_0$, we have  $a\rho_s^{A_0}b$ \tiff $a\rho b$.
\end{proof}

\section{Algebraic density and weight}

One may wonder why we do not define the notion of weight of a local contact algebra, or, more generally, of a Boolean algebra, in a much simpler way, based on the following reasoning: if $X$ is a semiregular space, then $\RO(X)$ is a base for $X$; thus, by \cite[Theorem 1.1.15]{E}, $\RO(X)$ contains a subfamily $\BB$  such that $\BB$ is a base for $X$ and $|\BB|=w(X)$; clearly, if $X$ is semiregular, then a subfamily $\BB$ of $\RO(X)$ is a base for $X$ \tiff for any $U\in \RO(X)$, we have $U=\bigcup\{V\in\BB\st V\sbe U\}$.

Having this in mind, it would be natural to define the weight of a Boolean algebra $B$ as the smallest cardinality of subsets $M$ of $B$ such that
for each  $b \in B$, $$b = \bigvee\set{x \in M\st x \leq b}.$$
The obtained cardinal invariant is well known in the theory of Boolean algebras as the  \emph{density} or {\em $\pi$-weight} (and even {\em pseudoweight})  of $B$ and is denoted by $\pi w(B)$ (see, e.g., \cite{kop89,monk14,Douwen}), but we will denote it by $\pi w_a(B)$. So,
$$\pi w_a(B)\df\min\{|M|\st (\fa b\in B)(b = \bigvee\set{x \in M\st x \leq b})\}.$$
 It is easy to see that $\pi w_a(B)$ is equal to the smallest cardinality of a dense subset of $B$ (see \cite[Lemma 4.9.]{kop89}).  Clearly, if $B$ is a dense subalgebra of $A$, then $\pi w_a(B) = \pi w_a(A)$; in particular, $B$ has the same density as its completion. Observe that a Boolean algebra has infinite $\pi$-weight \tiff it is infinite.

However, owing to the fact that in $\RO(X)$ the union is not equal to the join, $\pi w_a(\RO(X))$ may be strictly smaller than the weight of a space $X$, even when $X$ is  semiregular. It is well known that $\pi w_a$ corresponds to the topological notion of $\pi$-weight. Recall that a \emph{$\pi$--base} for a topological space $(X,\TT)$ is a subfamily $\PP$ of $\TT\stm\{\ems\}$ such that for every $U\in\TT\stm\{\ems\}$ there exists some $V\in\PP$ with $V\sbe U$.  The cardinal invariant \emph{$\pi$-weight}  is defined as
\begin{gather}\label{def:pwx}
\pi w(X) \df \min\set{\card{\PP}\st \PP \text{ is a $\pi$--base for }X}.
\end{gather}

It is easy to see that for a semiregular space $X$,
\begin{gather}\label{pwxb}
\pi w(X)=\pi w_a(\RO(X)) = \pi w_a(\RC(X)).
\end{gather}
Clearly, $\pi w(X) \leq w(X)$, and, as  is well known, the inequality may be strict, even for compact Hausdorff spaces. For example, consider  $\NNNN$ with the discrete topology, and its  Stone-\v{C}ech compactification $\beta\NNNN$. Since $\set{\set{n}\st n \in \NNNN}$ is a $\pi$--base for $\beta\NNNN$, we obtain $\pi w(\beta\NNNN) = \pi w_a(\RC(\beta\NNNN))=  \aleph_0$. On the other hand, it is well known that $w(\b\NNNN)=2^{\aleph_0}$ \cite{E}. The same example shows that $\pi w$ is not isotone, since  $\beta\NNNN \setminus \NNNN \subseteq \beta\NNNN$, and
\begin{gather*}
\aleph_0 = \pi w(\beta\NNNN) \lneq \pi w(\beta\NNNN \setminus \NNNN) = 2^{\aleph_0}.
\end{gather*}
Algebraically, the situation is as follows. Let $B$ be the finite--cofinite algebra over $\NNNN$, and $\overline{B}$ its completion; then, $ \pi w_a(\overline{B}) =\pi w_a(B) = \aleph_0$. Now, $\overline{B}$ is isomorphic to the set algebra $2^{\NNNN}$ which, in turn, is isomorphic to $\RC(\beta\NNNN)$.

In the rest of the section we shall investigate the connections among $w_a, \pi w_a,$ and their corresponding topological notions.

Suppose that $\klam{B,\rho,\BBBB}$ is an LCA. Obviously, \ref{bc3}
implies that $\BBBB$ is dense in $B$. If
 $D$ is a dense subset of $B$, then
$D\cap \BBBB$ is a dense subset of $B$,
since $\BBBB$ is an ideal of $B$.
Furthermore, every base for $\klam{B,\rho,\BBBB}$ is a dense subset of $B$; hence,
\begin{gather}\label{pww}
\pi w_a(B)\le w_a(B,\rho,\BBBB).
\end{gather}

\begin{pro}
Let $\klam{B,\rho,\BBBB}$ be an LCA and $M$ be a subset of $B$. Then
the following conditions are equivalent:
\begin{enumerate}
\item $M$ is a dense subset of $\klam{B,\rho,\BBBB}$.
\item For each $a\in B^+$ there exists $b\in M^+$ such that $b\llx a$.
\item For each $a\in\BBBB^+$, $a=\bv\{b\in M\st b\llx a\}$;
\item For each $a\in B^+$, $a=\bv\{b\in M\st b\llx a\}$.
\end{enumerate}
\end{pro}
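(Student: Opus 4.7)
The plan is to establish the chain $(1) \Rightarrow (2) \Rightarrow (4) \Rightarrow (3) \Rightarrow (2)$, together with $(2) \Rightarrow (1)$. The implication $(4) \Rightarrow (3)$ is trivial because $\BBBB^+ \subseteq B^+$, and $(2) \Rightarrow (1)$ is immediate from axiom $(\ll 1)$, since $b \llx a$ implies $b \leq a$, so any witness from $(2)$ is also a Boolean-density witness below $a$.

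For both $(1) \Rightarrow (2)$ and $(3) \Rightarrow (2)$, I would use the same trick of climbing from an arbitrary element of $B^+$ into $\BBBB^+$ with some non-tangential slack. Given $a \in B^+$, (LC3) supplies $x \in \BBBB^+$ with $x \llx a$, and (LC1) interpolates $x \llx y \llx a$ with $y \in \BBBB$; since $x \leq y$, we have $y \in \BBBB^+$. For $(1) \Rightarrow (2)$, the density of $M$ then yields $b \in M^+$ with $b \leq y$, and $b \leq y \llx a$ combined with $(\ll 3)$ gives $b \llx a$. For $(3) \Rightarrow (2)$, hypothesis $(3)$ applied to $y$ says that $\{b \in M : b \llx y\}$ has nonzero join $y$, so some $b \in M^+$ in this set satisfies $b \llx y \llx a$, and $(\ll 3)$ again promotes this to $b \llx a$.

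The substantive step is $(2) \Rightarrow (4)$. Fix $a \in B^+$ and set $s \df \bv \{b \in M : b \llx a\}$; by $(\ll 1)$ every such $b$ satisfies $b \leq a$, so $s \leq a$. If we had $a \we s^* \neq 0$, then applying $(2)$ to the compound element $a \we s^*$ would produce some $b \in M^+$ with $b \llx a \we s^*$. Since $a \we s^* \leq a$, axiom $(\ll 3)$ yields $b \llx a$, and hence $b \leq s$ by the definition of $s$; on the other hand $b \llx a \we s^* \leq s^*$ also forces $b \leq s^*$ via $(\ll 1)$, so $b \leq s \we s^* = 0$, contradicting $b \in M^+$. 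Therefore $a = s$, which is $(4)$. The only real subtlety in the whole argument is knowing to feed precisely $a \we s^*$ into the density hypothesis; everything else is mechanical manipulation of $(\ll 1)$ and $(\ll 3)$ together with the bounded-density axioms (LC1) and (LC3).
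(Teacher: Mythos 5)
Your proof is correct and takes essentially the same route as the paper: the decisive move in both is to feed $a \we u^*$ (for $u$ an upper bound of $\set{b\in M\st b\llx a}$) into the density hypothesis and derive $b\le u\we u^*=0$. One small caution: since $B$ is not assumed complete you should not write $s \df \bv\set{b\in M\st b\llx a}$ as though that join were already known to exist; but your argument uses only that $s$ is an upper bound of the set, so it actually shows that every upper bound dominates $a$, which together with $a$ being an upper bound (by $(\ll 1)$) is precisely the assertion $a=\bv\set{b\in M\st b\llx a}$.
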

\begin{proof}
The implications
\begin{center}
1. $\Iff$ 2.,  3. $\Iff$ 4., and 4. $\Implies$ 1.
\end{center}
 can be easily obtained using (LC3) or \cite[Lemma 4.9.]{kop89}, or the fact that $\BBBB$ is a dense subset of $B$. So we only show 1. $\Implies$ 4. Let $a\in B^+$; then $a=\bv\{b\in M\st b\le a\}$ since $M$ is dense in $B$. Let $a_1\in B$ and $b \leq a_1$ for every $b\in M$ such that $b\llx a$. Assume that $a \not\leq a_1$. Then $a\we a_1^*> 0$. By \ref{bc3} there exists some $c\in M^+$ such that $c\llx a\we a_1^*$, and the density of $M$ implies that there is some  $b\in M^+$ with $b\le c$. Then $b\llx a\we a_1^*$. Thus $b\llx a$; hence, $b\le a_1$ by the definition of $b$. Altogether, we obtain $b\le
a_1\we a_1^*=0$, a contradiction. It follows that $a\le a_1$; therefore, $a=\bv\{b\in M\st b\llx a\}$.
\end{proof}

\begin{defi}\label{pisemsp}
\rm A topological space $(X,\TT)$ is called  $\pi$-{\em semiregular} if  the family $\RO(X)$ is a $\pi$-base for $X$.
\end{defi}

Clearly, every semiregular space is $\pi$-semiregular. The converse is not true. Indeed, the \emph{half--disc topology} from \cite[Example 78]{SS} is a $\pi$-semiregular $T_{2\frac{1}{2}}$-space which is not semiregular.
On the other hand, there exist spaces which are not $\pi$-semiregular: if $X$ is an infinite set with the cofinite topology then $X$ is not a $\pi$-semiregular space since $\RO(X)=\{\ems, X\}$.

The following lemma from \cite{Douwen} is an analogue of  the Alexandroff-Urysohn theorem \cite[Theorem 1.1.15]{E}:

\begin{lm}\label{dislm}{\rm (\cite{Douwen})}
If\/ $\BB$ is a $\pi$-base for a space $X$ then there exists a $\pi$-base $\BB\ap$ of $X$ such that $\BB\ap\sbe\BB$ and $\card{\BB\ap}=\pi w(X)$.
\end{lm}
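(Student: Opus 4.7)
The plan is to mimic the standard proof of the Alexandroff--Urysohn base theorem (\cite[Theorem 1.1.15]{E}), with $\pi$-bases in place of bases. The key observation is that, unlike in the case of bases, we do not need any ``thickening'' inside $\BB$: a single refinement step suffices because $\pi$-bases are required only to contain non-empty opens inside each non-empty open, not to witness every point--neighbourhood pair.

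First I fix an arbitrary $\pi$-base $\CC$ for $X$ with $\card{\CC} = \pi w(X)$; such a $\CC$ exists by definition of $\pi w(X)$. For each $C \in \CC$, since $C$ is a non-empty open set and $\BB$ is a $\pi$-base for $X$, I can choose an element $B_C \in \BB$ with $B_C \sbe C$ (invoking the axiom of choice once, over the set $\CC$). Define
\begin{equation*}
\BB\ap \df \set{B_C \st C \in \CC}.
\end{equation*}
Then $\BB\ap \sbe \BB$ and $\card{\BB\ap} \le \card{\CC} = \pi w(X)$.

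Next I verify that $\BB\ap$ is itself a $\pi$-base. Let $U \in \TT \stm \set{\ems}$ be arbitrary. Since $\CC$ is a $\pi$-base, there is some $C \in \CC$ with $C \sbe U$; but then $B_C \in \BB\ap$ and $B_C \sbe C \sbe U$, as required. Being a $\pi$-base, $\BB\ap$ must satisfy $\card{\BB\ap} \ge \pi w(X)$ by the minimality in \eqref{def:pwx}. Combining the two inequalities yields $\card{\BB\ap} = \pi w(X)$, which completes the argument.

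I do not foresee any real obstacle: the only point worth noting is that the argument uses the axiom of choice for the selection $C \mapsto B_C$, and that the equality of cardinalities relies purely on the defining minimality of $\pi w(X)$ together with the trivial upper bound $\card{\BB\ap} \le \card{\CC}$. No separation or regularity hypothesis on $X$ is needed, which is exactly why this $\pi$-base version is simpler than the Alexandroff--Urysohn theorem for ordinary bases.
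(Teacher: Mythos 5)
Your argument is correct: selecting one $B_C\in\BB$ inside each member $C$ of a minimal-cardinality $\pi$-base $\CC$ yields a subfamily of $\BB$ that is still a $\pi$-base of cardinality at most $\card{\CC}=\pi w(X)$, and the reverse inequality is immediate from the minimality in the definition of $\pi w(X)$. The paper does not prove this lemma at all --- it is quoted from van Douwen --- and your one-step refinement argument is the standard one, correctly exploiting the fact that (unlike for bases) no pair-selection is needed for $\pi$-bases.
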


The next proposition is a generalisation of \eqref{pwxb}.

\begin{pro}\label{piweightpisem}
If $X$ is  $\pi$-semiregular, then  $\pi w(X) =\pi w_a(\RC(X))$.
\end{pro}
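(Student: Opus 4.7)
The plan is to prove the two inequalities $\pi w_a(\RC(X)) \le \pi w(X)$ and $\pi w(X) \le \pi w_a(\RC(X))$ separately, using the correspondence $U \leftrightarrow \cl(U)$ between $\RO(X)$ and $\RC(X)$. The $\pi$-semiregularity hypothesis enters crucially only in the direction $\pi w(X) \le \pi w_a(\RC(X))$; the reverse inequality holds for every topological space.

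For the inequality $\pi w_a(\RC(X)) \le \pi w(X)$, I would start with an arbitrary $\pi$-base $\PP$ for $X$. Since $X$ is $\pi$-semiregular, $\RO(X)$ is a $\pi$-base of $X$, so by Lemma \ref{dislm} applied to $\RO(X)$ we may assume $\PP \sbe \RO(X)$ and $|\PP| = \pi w(X)$. Set $M \df \{\cl(U) \st U \in \PP\} \sbe \RC(X)^+$. Given $F \in \RC(X)^+$, the interior $\int(F)$ is a non-empty open set, so some $U \in \PP$ satisfies $U \sbe \int(F)$; then $\cl(U) \sbe \cl(\int(F)) = F$, showing that $M$ is dense in $\RC(X)$. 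Hence $\pi w_a(\RC(X)) \le |M| \le |\PP| = \pi w(X)$.

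For the reverse inequality, I would take a dense subset $M$ of $\RC(X)$ of minimal cardinality $\pi w_a(\RC(X))$ and set $\PP \df \{\int(F) \st F \in M^+\}$. To see that $\PP$ is a $\pi$-base for $X$, let $U$ be a non-empty open set. By $\pi$-semiregularity there exists a non-empty $V \in \RO(X)$ with $V \sbe U$; then $\cl(V) \in \RC(X)^+$, and by density of $M$ in $\RC(X)$ there is some $F \in M^+$ with $F \le \cl(V)$, i.e., $F \sbe \cl(V)$. Taking interiors gives $\int(F) \sbe \int(\cl(V)) = V \sbe U$, and $\int(F) \ne \ems$ because $F = \cl(\int(F)) \ne \ems$. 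Hence $\int(F) \in \PP$ witnesses that $\PP$ refines every non-empty open set, so $\pi w(X) \le |\PP| \le |M| = \pi w_a(\RC(X))$.

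The only subtle point is the direction $\pi w(X) \le \pi w_a(\RC(X))$: without $\pi$-semiregularity, one cannot guarantee that inside an arbitrary non-empty open set $U$ there sits a non-empty regular open set $V$, and hence one cannot produce an element of $\PP$ below $U$. The other direction is essentially formal from Lemma \ref{dislm} together with the definitions.
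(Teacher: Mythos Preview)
Your proof is correct and follows essentially the same approach as the paper's: both arguments use Lemma~\ref{dislm} to extract from the $\pi$-base $\RO(X)$ a sub-$\pi$-base of cardinality $\pi w(X)$, observe that it is dense in $\RO(X)$ (equivalently, its closures are dense in $\RC(X)$), and handle the reverse inequality by turning a dense subset of $\RC(X)$ into a $\pi$-base via interiors. The paper simply says the inequality $\pi w(X)\le \pi w_a(\RO(X))$ is ``clear'' and appeals to the isomorphism $\RO(X)\cong\RC(X)$, whereas you spell out this direction explicitly. One small inconsistency: you assert that $\pi w_a(\RC(X))\le \pi w(X)$ holds for every topological space, yet your written argument for it invokes $\pi$-semiregularity (to force $\PP\sbe\RO(X)$ via Lemma~\ref{dislm}); this step is unnecessary, since for any open $U$ the closure $\cl(U)$ is already regular closed, so starting from an arbitrary $\pi$-base $\PP$ of minimal size the family $\{\cl(U)\st U\in\PP\}$ is dense in $\RC(X)$ without any hypothesis on $X$.
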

\begin{proof}
Since $X$ is $\pi$-semiregular, $\RO(X)$ is a $\pi$-base for $X$. Hence, by Lemma \ref{dislm},   there exists a $\pi$-base $\BB$ of $X$ such that $\BB\sbe \RO(X)$ and $\card{\BB}=\pi w(X)$; obviously, $\BB$ is a dense subset of $\RO(X)$ as well. Hence,  $\pi w(X)\ge\pi w_a(\RO(X))$, and, clearly,  $\pi w(X) \le\pi
w_a(\RO(X)) = \pi w_a(\RC(X))$.
\end{proof}

\begin{pro}\label{prorho}
Let $A$ be an infinite Boolean algebra. Then there exists a normal contact relation $\rho$ on $A$ such that
$w_a(\klam{A,\rho})=\pi w_a(A)$ and $\klam{A,\rho}_S$ is a base for $\klam{A,\rho}$.
\end{pro}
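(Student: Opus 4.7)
The plan is to reduce the proposition directly to Theorem \ref{nuldimlm}. Given the infinite Boolean algebra $A$, I would first produce a dense Boolean \emph{subalgebra} $A_0$ of $A$ whose cardinality realizes $\pi w_a(A)$. Since the paper records that $\pi w_a(A)$ equals the smallest cardinality of a dense subset of $A$, I pick a dense $M\subseteq A$ with $|M|=\pi w_a(A)$ and let $A_0$ be the Boolean subalgebra of $A$ generated by $M$. Because $M\subseteq A_0$ and $M$ is dense in $A$, the subalgebra $A_0$ is also dense in $A$. Moreover, $A$ is infinite forces $\pi w_a(A)\ge\aleph_0$, so $|M|$ is infinite; the subalgebra generated by an infinite set has the same cardinality as that set, hence $|A_0|=|M|=\pi w_a(A)$.

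Next, I would apply Theorem \ref{nuldimlm} to the pair $A_0\subseteq A$: define
\[
a\ll_\rho b \Iff (\exists c\in A_0)(a\le c\le b).
\]
Theorem \ref{nuldimlm}(a) then tells us simultaneously that $\langle A,\rho\rangle$ is an NCA, that $\langle A,\rho\rangle_S=A_0$, and that $A_0$ is the smallest base for $\langle A,\rho\rangle$, whence $w_a(\langle A,\rho\rangle)=|A_0|=\pi w_a(A)$. In particular, $\langle A,\rho\rangle_S=A_0$ is itself a base for $\langle A,\rho\rangle$, which is the second conclusion we need.

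There is essentially no obstacle beyond observing the two auxiliary facts: (i) $\pi w_a(A)$ is realized by some dense subset, and (ii) the subalgebra generated by an infinite dense subset is again dense and has the same cardinality. Both are elementary facts about Boolean algebras already cited or implicit in Section~5 of the paper. All the substantive work, namely the verification that the relation $\ll_\rho$ defined from a dense subalgebra gives a normal contact relation whose self-related elements are exactly that subalgebra and form its smallest base, has been done once and for all in Theorem \ref{nuldimlm}, so the present proposition is really just a packaging statement.
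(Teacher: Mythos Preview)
Your proposal is correct and follows essentially the same route as the paper's own proof: pick a dense subset of cardinality $\pi w_a(A)$, pass to the Boolean subalgebra it generates (which remains dense and, since $\pi w_a(A)\ge\aleph_0$, has the same cardinality), and invoke Theorem~\ref{nuldimlm}(a) to obtain the desired normal contact relation with $\klam{A,\rho}_S=A_0$ as smallest base. The paper's argument is identical up to notation.
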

\begin{proof}
There exists a dense subset $D$ of $A$ with $\card{D}=\pi w_a(A)$. Note that $\pi w_a(A)\ge\aleph_0$.
Let $B$ be the Boolean subalgebra of $A$ generated by $D$.
Now, Proposition \ref{nuldimlm} implies that there exists a normal contact relation $\rho$ on $A$ such that $B\df\klam{A,\rho}_S$ is a base for $\klam{A,\rho}$ and
$w_a(\klam{A,\rho})=\card{B}$.  Since $|B|=|D|$, we obtain  $w_a(\klam{A,\rho})=\pi w_a(A)$.
\end{proof}

\begin{theorem}\label{pon2}
Let $X$ be a $\pi$-semiregular space and $\pi w(X)\ge\aleph_0$. Then  there exists a zero-dimensional compact Hausdorff space $Y$ with $w(Y)=\pi w(X)$ such that the Boolean algebras $\RC(X)$ and $\RC(Y)$ are isomorphic.
\end{theorem}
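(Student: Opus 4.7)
The plan is to build $Y$ as the de Vries dual of a suitably chosen normal contact relation on the Boolean algebra $\RC(X)$, using Proposition \ref{prorho} to arrange simultaneously the correct weight and the zero-dimensionality of $Y$.

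First I would set $A\df \RC(X)$; this is a complete Boolean algebra by Example \ref{rct}. Since $X$ is $\pi$-semiregular and $\pi w(X)\ge\aleph_0$, Proposition \ref{piweightpisem} gives $\pi w_a(A)=\pi w(X)\ge\aleph_0$, so $A$ is infinite. Applying Proposition \ref{prorho} to $A$, I obtain a normal contact relation $\rho$ on $A$ with $w_a(\klam{A,\rho})=\pi w_a(A)=\pi w(X)$ and such that $\klam{A,\rho}_S$ is a base for $\klam{A,\rho}$.

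Next, I regard the NCA $\klam{A,\rho}$ as a complete LCA $\klam{A,\rho,A}$ (with improper ideal $\BBBB=A$) and define $Y\df\LAM^a(\klam{A,\rho,A})$. By the restriction of Theorem \ref{lccont} to the de Vries case ($\DHC\subset\DHLC$), $Y$ is a compact Hausdorff space and $\LAM^t(Y)=\klam{\RC(Y),\rho_Y,\CR(Y)}$ is LCA-isomorphic to $\klam{A,\rho,A}$; in particular, the underlying Boolean algebras satisfy $\RC(Y)\cong A=\RC(X)$, which is the desired isomorphism. Theorem \ref{weightpiw} then yields
\begin{equation*}
w(Y)=w_a(\LAM^t(Y))=w_a(\klam{A,\rho,A})=\pi w(X).
\end{equation*}
For zero-dimensionality I would invoke Theorem \ref{nuldim}: since $\klam{A,\rho,A}$ is already complete it is its own LCA-completion (take $\p=\id$), and $\klam{A,\rho,A}_S\cap A=\klam{A,\rho}_S$ was arranged to be a base for $\klam{A,\rho,A}$, so the theorem gives that $Y$ is zero-dimensional.

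The main obstacle is purely conceptual: one must recognise that the three ingredients needed -- the density equality $\pi w(X)=\pi w_a(\RC(X))$, the existence of a normal contact relation on $\RC(X)$ realising the density as the algebraic weight and making the squares $a\ll a$ a base, and the algebraic characterisation of zero-dimensionality of the dual space -- are already available as Propositions \ref{piweightpisem}, \ref{prorho} and Theorem \ref{nuldim}. Once this is seen, the construction is immediate; no auxiliary estimates or separate case analyses are required.
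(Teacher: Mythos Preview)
Your proof is correct and follows essentially the same route as the paper: set $A=\RC(X)$, use Proposition~\ref{piweightpisem} to get $\pi w_a(A)=\pi w(X)$, apply Proposition~\ref{prorho} to produce a normal contact relation $\rho$ on $A$ with $w_a(\klam{A,\rho})=\pi w(X)$ and $\klam{A,\rho}_S$ a base, then take $Y=\LAM^a(\klam{A,\rho})$ and invoke Theorems~\ref{nuldim} and~\ref{weightpiw} together with the de Vries duality. The only additional detail you spell out beyond the paper is the verification that $A$ is infinite (so Proposition~\ref{prorho} applies) and the explicit identification of the NCA as a complete LCA, both of which are fine.
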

\begin{proof}
Set $\tau\df\pi w(X)$ and $A \df \RC(X)$. Then, by Proposition \ref{piweightpisem}, $\pi
w_a(A)=\tau$. Now, by Proposition \ref{prorho}, there exists a normal contact relation $\rho$ on $A$ such that $w_a(\klam{A,\rho})=\tau$ and $\klam{A,\rho}_S$ is a base for $\klam{A,\rho}$. Using Theorems \ref{nuldim} and \ref{weightpiw}, we see that  $Y \df \LAM^a(\klam{A,\rho})$ is a zero-dimensional compact Hausdorff space with $w(Y)=\tau$.  Finally, by de Vries' duality theorem, $\RC(Y)$ is isomorphic to $A$, i.e. to $\RC(X)$.
 \end{proof}

  Theorem \ref{pon2} is not true for general spaces with infinite $\pi$-weight. Indeed, let $X$ be countably infinite with the cofinite topology; then, $\pi w(X) = \aleph_0$, and $\RC(X) = \set{\emptyset,X}$. On the other hand, if $Y$ is a zero-dimensional compact Hausdorff space with $\RC(Y) = \set{\emptyset,Y}$ then $1=w(Y)<\aleph_0=\pi w(X)$.


\baselineskip = 1.2\normalbaselineskip

\end{document}